\documentclass[11pt]{elsarticle}
\usepackage{amsmath,amssymb,amsthm}
\usepackage{mathrsfs}
\usepackage[bookmarks=true,
            bookmarksnumbered=true,
            bookmarksopen=true,
            colorlinks,
            pdfborder=001,
            linkcolor=black]{hyperref}
\usepackage{bm}
\usepackage{array}
\usepackage{float}
\usepackage{color}
\usepackage{lineno}
\usepackage{subcaption}
\usepackage{stmaryrd}
\usepackage{extarrows}
\usepackage{subcaption}
\usepackage{multirow}
\newtheorem{thm}{Theorem}[section]
\newtheorem{lem}[thm]{Lemma}
\newtheorem{rmk}{Remark}[section]
\newtheorem{example}{Example}[section]

\newtheorem{definition}{Definition}[section]
\newproof{pf}{Proof}
\numberwithin{equation}{section}
\numberwithin{figure}{section}
\numberwithin{table}{section}
\topmargin-.5in \textheight9in \oddsidemargin0in \textwidth6.5in

\allowdisplaybreaks


\newcommand\dd{\mathrm{d}}

\newcommand\abs[1]{\lvert #1 \rvert}

\newcommand\pd[2]{\dfrac{\partial {#1}}{\partial {#2}}}

\newcommand\bx{\bm{x}}
\newcommand\bv{\bm{v}}
\newcommand\bw{\bm{w}}
\newcommand\bF{\bm{F}}

\newcommand\bV{\bm{V}}
\newcommand\bU{\bm{U}}

\newcommand\bT{\bm{T}}
\newcommand\vx{v_1}
\newcommand\vy{v_2}
\newcommand\vz{v_3}
\newcommand\tbF{\widetilde{\bF}}

\newcommand\avx[1]{\overline{#1}}
\newcommand\avy[1]{\avx{\avx{#1}}}
\newcommand\avz[1]{\avx{\avy{#1}}}

\newcommand\jump[1]{\llbracket #1 \rrbracket}
\newcommand\mean[1]{\{\!\!\{ #1 \}\!\!\}}
\newcommand\meanln[1]{\{\!\!\{ #1 \}\!\!\}^{\text{ln}}}

\newcommand\jumpangle[1]{\langle\!\langle #1 \rangle\!\rangle}

\begin{document}

\begin{frontmatter}

  \title{Entropy stable adaptive moving mesh schemes for 2D and 3D special relativistic hydrodynamics}

  \author{Junming Duan}
  \ead{duanjm@pku.edu.cn}
  \author{Huazhong Tang\corref{cor1}}
  \ead{hztang@math.pku.edu.cn}
  \address{Center for Applied Physics and Technology, HEDPS and LMAM,
    School of Mathematical Sciences, Peking University, Beijing 100871, P.R. China}
  \cortext[cor1]{Corresponding author. Fax:~+86-10-62751801.}

  \begin{abstract}
This paper develops  entropy stable (ES) adaptive moving mesh schemes for the 2D and 3D special relativistic hydrodynamic (RHD) equations.
 They are built on the ES finite {volume} approximation of the RHD equations in curvilinear coordinates, the discrete geometric conservation laws,
and the mesh adaptation implemented by iteratively solving the Euler-Lagrange equations of the mesh   adaption   functional
in the computational domain  with suitably chosen monitor functions.
   First,  a sufficient condition is proved for the two-point entropy conservative (EC) flux,
   by mimicking the derivation of the continuous entropy identity in curvilinear coordinates and using the discrete geometric conservation laws given by the conservative metrics method.
 Based on such sufficient condition, the EC fluxes for the RHD equations in curvilinear coordinates are derived and  the second-order accurate semi-discrete EC schemes are developed to
satisfy the entropy identity for the given convex entropy pair.
Next, the semi-discrete ES schemes satisfying the entropy inequality
 are proposed by adding a suitable dissipation term to the EC scheme and utilizing linear reconstruction with the minmod limiter
in the scaled entropy variables in order to suppress the numerical oscillations of the above EC scheme.
Then, the semi-discrete ES schemes are integrated in time by using the second-order strong stability preserving explicit Runge-Kutta schemes.
Finally,
several  numerical results show that our 2D and 3D ES adaptive moving mesh schemes effectively capture the localized structures,
such as sharp transitions or discontinuities, and are more efficient than {their counterparts} on uniform mesh.
  \end{abstract}

  \begin{keyword}
    Entropy conservative flux\sep entropy stable scheme\sep moving mesh scheme
    \sep mesh adaptation\sep special relativistic hydrodynamics
  \end{keyword}

\end{frontmatter}
\section{Introduction}
This paper is concerned with the entropy stable (ES) adaptive moving mesh schemes
for the special relativistic hydrodynamic (RHD) equations.
In the laboratory frame,  the 2D and 3D special RHD equations can be cast in the  divergence form
\begin{equation}\label{eq:RHD1}
\pd{\bU}{t}+\sum_{k=1}^{d}\pd{\bF_k(\bU)}{x_k}=0,~d=2,3,
\end{equation}
where $\bU$ and $\bF_k$ are respectively
the conservative vector and the flux vector in the $x_k$-direction
and defined by
\begin{align}\label{eq:RHD2}
\bU=\begin{pmatrix}
D \\ \bm{m} \\ E \\
\end{pmatrix},\quad
\bF_k=\begin{pmatrix}
Dv_k \\ \bm{m}v_k+p\bm{e}_k\\
m_k
\end{pmatrix}, \ k=1,\cdots,d,
\end{align}
with the mass density $D=\rho W$, the momentum density
$\bm{m}=(m_1,\cdots,m_d)^\mathrm{T}=DhW\bv$,
the energy density $E=DhW-p$, the pressure $p$,  the fluid velocity
$\bv=(v_1,\cdots,v_d)^\mathrm{T}$, and
the rest-mass density $\rho$.
Here $\bm{e}_k$ is the $k$th column of the unit $d\times d$ matrix,
$k=1,\cdots,d$,
$W=1/\sqrt{1-\abs{\bv}^2}$ is the Lorentz factor and $h=1+e+p/\rho$ is
the specific enthalpy with  the specific internal energy $e$ and units in which the speed
of light is equal to one.
The governing equations \eqref{eq:RHD1}-\eqref{eq:RHD2} need to be closed
by the equation of state (EOS).
This paper will only consider the perfect gas with the simple EOS  given by
\begin{equation}\label{EQ:EOS01}
p=(\Gamma-1)\rho e,
\end{equation}
with the adiabatic index $\Gamma\in(1,2]$.
Since there is no explicit {expression} for the primitive variables
$(\rho,\bv^\mathrm{T},p)$ and the flux $\bF_k$ in terms of $\bU$,
a nonlinear algebraic equation such as
\begin{equation*}
E+p=DW+\dfrac{\Gamma}{\Gamma-1}pW^2,
\end{equation*}
needs to be (numerically) solved in order to recover the value of the pressure $p$ from the given $\bU$  and
then the rest-mass density $\rho$, the specific enthalpy
$h$, and the velocity $\bv$ by using
\begin{equation*}
\rho=\dfrac{D}{W},\quad h=1+\dfrac{\Gamma p}{(\Gamma-1)\rho},\quad
\bv=\dfrac{\bm{m}}{Dh}.
\end{equation*}
The relativistic description for the fluid dynamics
 at nearly the speed of light should be considered
 in investigating  the astrophysical
phenomena  from stellar to galactic scales, e.g. coalescing neutron stars, core
collapse supernovae,  active galactic nuclei, superluminal jets, the formation of black holes,
and gamma-ray bursts etc.
The  system \eqref{eq:RHD1}-\eqref{eq:RHD2} becomes much more complicated than the Euler equations in gas dynamics due to the relativistic effect, so its analytic treatment is very challenging.
Numerical simulation is a powerful way to help us  better understand   the physical mechanisms in the RHD.
The pioneering numerical work may date back to the finite difference methods with the artificial viscosity technique
in the Lagrangian coordinates \cite{May1966Hydrodynamic,May1967Stellar}
and the Eulerian coordinates \cite{Wilson1972Numerical}.
Since the early 1990s, the modern shock-capturing methods were extended to
the special or general RHD or relativastic magnetohydronamics {(RMHD)}.
They include, but are  not limited to,  
the Roe solver \cite{Eulderink1994General},
the Harten-Lax-van Leer methods \cite{DelZanna2002An,Schneider1993New},
the Harten-Lax-van Leer Contact methods \cite{Ling2019Physical,Mignone2005An},
the essentially non-oscillatory (ENO) and the weighted ENO (WENO) methods \cite{DelZanna2002An,Dolezal1995Relativistic,Tchekhovskoy2007WHAM},
the piecewise parabolic methods \cite{Marti1996Extension,Mignone2005The},
the adaptive mesh refinement method \cite{Zhang2006RAM},
the Runge-Kutta discontinuous Galerkin (DG) methods with WENO limiter \cite{Zhao2013Discontinuous},
the direct Eulerian generalized Riemann problem schemes
\cite{Wu2016A,Wu2014A,Yang2011A1D,Yang2012A2D}, the adaptive moving mesh method \cite{He2012RHD},
the two-stage fourth-order accurate time discretizations \cite{Yuan2020Two} and so on.
The readers are  also referred to the early review articles \cite{Font2008Numerical,Marti2003Numerical,Marti2015Grid}
for more references.
Recently, the properties of the admissible state set
and the physical-constraints-preserving (PCP) numerical schemes were well studied for the RHD
and the special RMHD \cite{Ling2019Physical,Wu2015Finite,Wu2016Physical,Wu2017Admissible,Wu2018Admissible}.

 For the RHD equations \eqref{eq:RHD1}-\eqref{eq:RHD2},
it is interesting to design a numerical scheme being consistent with the Clausius inequality, i.e., the entropy inequality.
For a general quasi-linear hyperbolic conservation laws, the entropy condition {is needed} to single out the unique physical relevant solution among all the weak solutions. 
However, in practice, it is very hard to show that the high-order schemes of
the scalar conservation laws and the schemes for the hyperbolic system
satisfy the entropy inequality for any convex entropy function.
In view of this, many researchers are trying to study the high-order accurate
entropy conservative (EC) or {ES} schemes,
which satisfy the entropy identity or inequality for a given entropy pair.
The second-order EC schemes 
were studied in \cite{Tadmor1987The,Tadmor2003Entropy}, and their higher-order
extension was considered in \cite{Lefloch2002Fully}.
Unfortunately, the EC schemes may become oscillatory near the
discontinuities. To suppress possible numerical oscillation,
some additional dissipation term  has to be added to obtain the ES schemes.
Combining the EC flux
with the ``sign'' property of the ENO reconstruction, the
arbitrary high-order ES schemes were constructed by using high-order
dissipation terms \cite{Fjordholm2012Arbitrarily}.
The ES schemes based on summation-by-parts (SBP) operators were
developed for the Navier-Stokes equations \cite{Fisher2013High}.
Several ES DG schemes were also
studied, such as the semi-discrete DG for scalar conservation laws \cite{Jiang1994On},
the space-time DG formulation \cite{Barth1999Numerical,Hiltebrand2014Entropy}
and the DG schemes using suitable quadrature rules
for the conservation laws on hexahedron meshes \cite{Carpenter2014Entropy,Gassner2013A} and unstructured simplex meshes \cite{Chen2020Review}.
As a base of those works, constructing the affordable two-point EC flux is key. Recently, the EC or ES schemes were also
extended to the shallow water equations \cite{Fjordholm2011Well},
the shallow water magnetohydrodynamics \cite{Duan2020SWMHD,Winters2016An},
the RHD equations {\cite{Bhoriya2020Entropy,Duan2020RHD}},
the magnetohydrodynamics \cite{Chandrashekar2016Entropy,Winters2016Affordable},
the RMHD equations \cite{Duan2020RMHD,Wu2020Entropy}, and so on.

In view of the fact that the solutions of the RHD equations often exhibit localized structures,
e.g. containing sharp transitions or discontinuities in
relatively localized regions, the adaptive mesh strategy
can improve the efficiency and quality of numerical simulation.
Up to now, adaptive moving mesh methods have been successfully applied to  many problems in science and engineering,
see e.g. \cite{Brackbill1993An,Brackbill1982Adaptive,He2012RHD,He2012RMHD,Huang2001Variational,Huang2011Adaptive,Li2001Moving,Ren2000An,
  Tang2003Adaptive,Tang2003An,Wang2004A,Winslow1967Numerical,Yang2012A,Zhang2020An}.
The readers are also  referred to the review papers \cite{Budd2009Adaptivity,Tang2005Moving} and references therein.
This paper aims at developing the ES adaptive moving mesh  schemes for the
2D and 3D RHD equations \eqref{eq:RHD1}-\eqref{eq:RHD2}.
Our schemes will be built on the ES finite {volume} approximation of the RHD
equations in curvilinear coordinates,  the discrete geometric conservation laws, and the mesh adaptation implemented by iteratively solving the Euler-Lagrange equations of the mesh   adaption  functional in the computational domain with
suitably chosen monitor functions.
To do that, we first prove  a sufficient condition for the two-point EC fluxes and then derive the EC fluxes in curvilinear coordinates by utilizing the procedure in \cite{Duan2020RMHD}.
The key point is that the geometric conservation laws (GCLs) introduced by the coordinate transformation should be satisfied by the  discretization of the metrics.
The conservative metric method  \cite{Thomas1979} is adopted to  guarantee the GCLs and the suitable dissipation term utilizing linear reconstruction with the minmod limiter
in the scaled entropy variables is added to the EC flux to get the second-order accurate ES schemes.
The final fully discrete schemes are developed by integrated the semi-discrete ES schemes with the second-order accurate explicit strong-stability preserving (SSP) Runge-Kutta (RK) schemes.
Two approximations of the volume conservation law  are presented and compared.

The paper is organized as follows.
Section \ref{section:Symm} introduces the entropy conditions for the RHD equations in Cartesian and curvilinear coordinates.
Section \ref{section:3D} presents the  EC and ES schemes, including the discretization of the metrics,
and construction of the two-point EC flux in curvilinear coordinates.
Section \ref{section:MM} gives the adaptive moving mesh strategy.
Several 2D and 3D numerical experiments are conducted in Section \ref{section:Num} to validate the efficiency and the ability of our schemes in capturing the sharp transitions or discontinuities.
Section \ref{section:Conclusion}  concludes the work
with final remarks.

\section{Entropy conditions for the RHD}\label{section:Symm}
For the RHD equations \eqref{eq:RHD1}-\eqref{eq:RHD2} with the EOS \eqref{EQ:EOS01}, there exists an entropy pair $(\eta,q_k)$,
\begin{equation*}
\eta(\bU)=-\dfrac{\rho Ws}{\Gamma-1}, \quad q_k(\bU)=\eta v_k,
\end{equation*}
where $s=\ln(p/\rho^\Gamma)$ is the thermodynamic entropy,  $\eta$ is a convex function of $\bU$ and $(\eta,q_k)$ satisfies
\begin{equation*}
q_k'(\bU)=\eta'(\bU)\bF_k'(\bU),
\quad k=1,\cdots,d.
\end{equation*}
Here $\eta$ and $q_k$ are called the {\em entropy function} and {\em entropy flux}, respectively.
From those, we can also define the {\em entropy variables}
$\bV$  by
\begin{equation*}
  \bV:=\eta'(\bU)^\mathrm{T}=\left(\dfrac{\Gamma-s}{\Gamma-1}+\frac{\rho}{p},
  \dfrac{\rho W\bv^\mathrm{T}}{p}, -\dfrac{\rho W}{p}\right)^\mathrm{T},
\end{equation*}
and the {\em entropy potential} $\phi$  and {\em entropy potential flux} $\psi_k$ by using the conjugate variables as follows
\begin{equation}\label{EQ:potential}
  \phi:=\bV^\mathrm{T}\bU-\eta=\rho W,\quad \psi_k:=\bV^\mathrm{T}\bF_k - q_k=\rho Wv_k,
\end{equation}
respectively.

For the smooth solutions of \eqref{eq:RHD1}-\eqref{eq:RHD2} with the entropy pair $(\eta,q_k)$,
{multiplying \eqref{eq:RHD1} by $\bV^\mathrm{T}$ {left} gives the entropy identity
\begin{equation*}
\bV^\mathrm{T}\left(\pd{\bU}{t}+\sum_{k=1}^d\pd{\bF_k(\bU)}{x_k}\right)
=\pd{\eta(\bU)}{t}+\sum\limits_{k=1}^d\pd{q_k(\bU)}{x_k} = 0.
\end{equation*}
For the discontinuous solutions,  it is replaced with the entropy inequality
\begin{equation*}
\pd{\eta(\bU)}{t}+\sum\limits_{k=1}^d\pd{q_k(\bU)}{x_k} \leqslant 0,
\end{equation*}
which holds in the sense of distributions.

Next, let us derive the RHD equations in curvilinear coordinates
and corresponding entropy condition.
Let $\Omega_p$ be the domain where the physical problem \eqref{eq:RHD1}-\eqref{eq:RHD2} is defined,
and $\Omega_c$ be the computational domain with coordinates $\bm{\xi}=(\xi_1,\cdots,\xi_d)$ that
is artificially chosen for the sake of mesh redistribution or movement.
Our adaptive moving meshes for $\Omega_p$ can be generated as the images of a reference mesh in $\Omega_c$ by a time dependent,
differentiable, one-to-one coordinate  mapping {$\bx = \bx(\bm{\xi},t)$},
which can be expanded as
\begin{align}\label{eq:transf}
    t=\tau,\ \ \bx=\bx(\bm{\xi},\tau),\ \
  \bm{\xi}=(\xi_1,\cdots,\xi_d)\in\Omega_c.
\end{align}
Under this transformation, the detailed transformation of the system \eqref{eq:RHD1}-\eqref{eq:RHD2} in the coordinates {$(\bm{\xi},\tau)$} reads
\begin{align}\label{eq:RHD_MM}
\pd{\left(J\bU\right)}{\tau}+\sum_{k=1}^d\dfrac{\partial}{\partial\xi_k}{\left(J\pd{\xi_k}{t}\bU\right)}
+\sum_{k,l=1}^d\dfrac{\partial}{\partial\xi_k}{\left(J\pd{\xi_k}{x_l}\bF_l\right)}=0,
\end{align}
where  $J$ denotes the determinant of the Jacobian matrix and
its 3D version   is explicitly given by
\begin{equation*}
J=\det\left(\pd{(t,\bx)}{(\tau,\bm{\xi})}\right)=
\begin{vmatrix}
1 & 0 & 0 & 0\\
\pd{x_1}{\tau} & \pd{x_1}{\xi_1} & \pd{x_1}{\xi_2} & \pd{x_1}{\xi_3} \\
\pd{x_2}{\tau} & \pd{x_2}{\xi_1} & \pd{x_2}{\xi_2} & \pd{x_2}{\xi_3} \\
\pd{x_3}{\tau} & \pd{x_3}{\xi_1} & \pd{x_3}{\xi_2} & \pd{x_3}{\xi_3}
\end{vmatrix}.
\end{equation*}
The metric coefficients should satisfy the following geometric conservation laws (GCLs) consisting of the
the volume conservation law (VCL) and surface conservation laws (SCLs)
\begin{equation}\label{eq:GCL}
\begin{aligned}
&\text{VCL:}\quad \pd{J}{\tau}+\sum_{k=1}^d\dfrac{\partial}{\partial\xi_k}{\left(J\pd{\xi_k}{t}\right)}=0,\\
&\text{SCLs:}\quad \sum_{k=1}^d\dfrac{\partial}{\partial\xi_k}{\left(J\pd{\xi_k}{x_l}\right)}=0,~ l=1,\cdots,d.
\end{aligned}
\end{equation}
The former indicates that volumetric increment of a moving cell must be equal to the sum of the changes along the surfaces that enclose the volume,
while the latter indicates that cell volumes must be closed by its surfaces \cite{Zhang1993Discrete}.
Those GCLs mean that free-stream solution is preserved by \eqref{eq:RHD_MM},
that is to say, if a physical constant state is given as the initial condition, it will remain unchanged.
If the free-stream solution cannot be preserved by the numerical schemes on the moving mesh, it may cause some large errors.

Finally, let us derive the entropy identity for the RHD equations \eqref{eq:RHD_MM}. 
The three parts of the left-hand side of the product of  $\bV^\mathrm{T}$
and \eqref{eq:RHD_MM}
 can be respectively rewritten as follows
\begin{align*}
&\bV^\mathrm{T}\pd{\left(J\bU\right)}{\tau}=\pd{(J\eta)}{\tau}+\left(\bV^\mathrm{T}\bU-\eta\right)\pd{J}{\tau}, \\
&\bV^\mathrm{T}\sum_{k=1}^d\dfrac{\partial}{\partial\xi_k}{\left(J\pd{\xi_k}{t}\bU\right)}
=\sum_{k=1}^d\dfrac{\partial}{\partial\xi_k}\left(J\pd{\xi_k}{t}\eta\right)
+\left(\bV^\mathrm{T}\bU-\eta\right)\sum_{k=1}^d\dfrac{\partial}{\partial\xi_k}\left(J\pd{\xi_k}{t}\right), \\
&\bV^\mathrm{T}\sum_{k,l=1}^d\dfrac{\partial}{\partial\xi_k}{\left(J\pd{\xi_k}{x_l}\bF_l\right)}
=\sum_{k,l=1}^d\dfrac{\partial}{\partial\xi_k}\left(J\pd{\xi_k}{x_l}q_l\right)
+\sum_{l=1}^d\left(\bV^\mathrm{T}\bF_l-q_l\right)\sum_{k=1}^d
\dfrac{\partial}{\partial\xi_k}\left(J\pd{\xi_k}{x_l}\right).
\end{align*}
Using the GCLs \eqref{eq:GCL} gives
\begin{align}\label{eq:EntropyId_MM}
\pd{\left(J\eta\right)}{\tau}+\sum_{k=1}^d\dfrac{\partial}
{\partial\xi_k}{\left(J\pd{\xi_k}{t}\eta\right)}
+\sum_{k,l=1}^d\dfrac{\partial}{\partial\xi_k}
{\left(J\pd{\xi_k}{x_l}q_l\right)}=0,
\end{align}
which is the entropy identity in the coordinates $(\tau,\bm{\xi})$.
Similarly, it will be replaced with corresponding entropy inequality when the solutions $\bU$ are not smooth.

\section{Numerical schemes}\label{section:3D}
This section focuses on constructing the 3D moving mesh EC and ES schemes for the RHD equations \eqref{eq:RHD_MM} in curvilinear coordinates on the structured mesh.
The 2D schemes can be obtained by setting $x_3=\xi_3$ and removing all the dependence of $\bU$ on $\xi_3$ and $x_3$,  $\bF_3$ and the $x_3$-component of $\bU$ and $\bF_k$, $k=1,2$. In view of $t=\tau$, the symbol $\tau$ will be replaced with $t$ hereafter.

\subsection{EC scheme}
Assume that the computational domain $\Omega_c$
is rectangular, e.g. $[0,1]\times[0,1]\times[0,1]$, and divided into a fixed orthogonal mesh
 $\{(\xi_{1,i_1+\frac12},\xi_{2,i_2+\frac12},
\xi_{3,i_3+\frac12})$:
$0=\xi_{k,\frac12}<\xi_{k,1+\frac12}<\cdots<\xi_{k,i_k+\frac12}
<\cdots<\xi_{k,N_k+\frac12}=1$, $k=1,2,3\}$
%
%
with the constant  step-size $\Delta \xi_k=\xi_{k,i_k+\frac12}-\xi_{k,i_k-\frac12}$.
%
%
For the sake of brevity, the index $\bm{i}=(i_1,i_2,i_3)$
is used to denote the cell
$[\xi_{1,i_1-\frac12},\xi_{1,i_1+\frac12}]\times[\xi_{2,i_2-\frac12},\xi_{2,i_2+\frac12}]\times[\xi_{3,i_3-\frac12},\xi_{3,i_3+\frac12}]$
and $\hat{\bm{i}}_{1,\pm}$, $\cdots$, $\hat{\bm{i}}_{3,\pm}$
denote the middle points of the cell interfaces, i.e.
$(\xi_{1,i_1\pm\frac12},\xi_{2,i_2},\xi_{3,i_3})$,
$(\xi_{1,i_1},\xi_{2,i_2\pm \frac12},\xi_{3,i_3})$,
$(\xi_{1,i_1},\xi_{2,i_2},\xi_{3,i_3\pm \frac12})$, respectively,
where $\xi_{k,i_k}=(\xi_{k,i_k+\frac12}+\xi_{k,i_k-\frac12})/2$,
$k=1,2,3$.

%

For the cell $\bm{i}$, the RHD system \eqref{eq:RHD_MM}
and the first equation of \eqref{eq:GCL} can be approximated  as
the following semi-discrete conservative finite volume scheme
\begin{align}
\label{eq:semi_U}
&\dfrac{\dd}{\dd t}(J\bU)_{\bm{i}}=
-\sum_{k=1}^3\dfrac{1}{\Delta \xi_k}\delta_k\left[\widehat{\bF}_k\right]_{\bm{i}},
\\
\label{eq:semi_J}
&\dfrac{\dd}{\dd t}J_{\bm{i}}=
-\sum_{k=1}^3\dfrac{1}{\Delta \xi_k}\delta_k\left[J\pd{\xi_k}{t}\right]_{\bm{i}},
\end{align}
where $\delta_k[\cdot]$ is the second-order central difference operator in the $i_k$-direction,
e.g. $\delta_k[a]_{\bm{i}}=a_{\hat{\bm{i}}_{k,+}}-a_{\hat{\bm{i}}_{k,-}}$,
 $J_{\bm{i}}(t)$ and $(J\bU)_{\bm{i}}(t)$ approximate the {cell average values} of
$J\left(t,\bm{\xi}\right)$ and $(J\bU)(t,\bm{\xi})$  over the cell $\bm{i}$, respectively,
and $\widehat{\bF}_k(t)$
is the numerical flux approximating the flux
$\left(J\pd{\xi_k}{t}\bU+\sum\limits_{l=1}^3 J\pd{\xi_k}{x_l}\bF_l\right)(t,\bm{\xi})$, $k=1,2,3$.
The metrics $\left(J\pd{\xi_k}{t}\right)_{\hat{\bm{i}}_{k,\pm}}$ and $\left(J\pd{\xi_k}{x_l}\right)_{\hat{\bm{i}}_{k,\pm}}$
 in \eqref{eq:semi_U}-\eqref{eq:semi_J} are calculated by \eqref{eq:SCLCoeff}-\eqref{eq:VCLCoeff}, see {Section \ref{subsec:GCLs}},
with which the SCLs in the second equation of \eqref{eq:GCL} are satisfied at the discrete level, i.e.
\begin{equation}\label{eq:SCL_dis}
	\sum_{k=1}^3\dfrac{1}{\Delta \xi_k}\delta_k\left[J\pd{\xi_k}{x_l}\right]_{\bm{i}}=0,~l=1,2,3.
\end{equation}

\begin{definition}[EC scheme]
	The semi-discrete scheme \eqref{eq:semi_U}-\eqref{eq:semi_J} is EC
	and corresponding numerical flux
	$\widehat{\bF}_{k}$ is called the {\em EC flux},
	if its solution  satisfies a semi-discrete entropy identity
	\begin{equation*}
	\dfrac{\dd}{\dd t}J_{\bm{i}}\eta(\bU_{\bm{i}}(t))
	+\sum_{k=1}^3\dfrac{1}{\Delta \xi_k}\delta_k\left[\widetilde{q}_{k}(t)\right]_{\bm{i}}=0,
	\end{equation*}
	for some numerical entropy fluxes
	$\widetilde{q}_{k}$ consistent with the continuous entropy flux
	$J\pd{\xi_k}{t}\eta+\sum\limits_{l=1}^3J\pd{\xi_k}{x_l}q_l$.
\end{definition}

The following lemma gives a sufficient condition for the semi-discrete scheme \eqref{eq:semi_U}-\eqref{eq:semi_J} to be EC.

\begin{lem}\label{lem:ECFluxMM}\rm
	Assume that symmetric two-point flux $\tbF_{k,\hat{\bm{i}}_{k,\pm}}$ is consistent with
	$J\pd{\xi_k}{t}\bU+\sum\limits_{l=1}^3 J\pd{\xi_k}{x_l}\bF_l$,
	and satisfies
	\begin{align}\label{eq:ECConditionMM}
	\jump{\bV}_{\hat{\bm{i}}_{k,\pm}}^\mathrm{T}\cdot\tbF_{k,\hat{\bm{i}}_{k,\pm}}=
	\left(J\pd{\xi_k}{t}\right)_{\hat{\bm{i}}_{k,\pm}}\jump{\phi}_{\hat{\bm{i}}_{k,\pm}}
	+\sum_{l=1}^3\left(J\pd{\xi_k}{x_l}\right)_{\hat{\bm{i}}_{k,\pm}}\jump{\psi_l}_{\hat{\bm{i}}_{k,\pm}},
	\end{align}
where $\phi$ and $\psi_l$ are defined in \eqref{EQ:potential},
	then the semi-discrete scheme \eqref{eq:semi_U}-\eqref{eq:semi_J}
with  $\widehat{\bF}_k(t)=\tbF_{k,\hat{\bm{i}}_{k,\pm}}$
	is EC with the numerical entropy fluxes
	\begin{subequations}\label{eq:NumEntropyFluxMM}
	\begin{align*}
	\widetilde{q}_{k,\hat{\bm{i}}_{k,\pm}}=\mean{\bV}_{\hat{\bm{i}}_{k,\pm}}^\mathrm{T}\tbF_{k,\hat{\bm{i}}_{k,\pm}}
	-\left(J\pd{\xi_k}{t}\right)_{\hat{\bm{i}}_{k,\pm}}\mean{\phi}_{\hat{\bm{i}}_{k,\pm}}
	-\sum_{l=1}^3\left(J\pd{\xi_k}{x_l}\right)_{\hat{\bm{i}}_{k,\pm}}\mean{\psi_l}_{\hat{\bm{i}}_{k,\pm}},
	\end{align*}
    \end{subequations}
    where $\jump{a}_{\hat{\bm{i}}_{k,\pm}}$ and $\mean{a}_{\hat{\bm{i}}_{k,\pm}}$
    denote the jumps and the arithmetic means of $a$ in the $i_k$-direction, respectively, e.g.
    \begin{align*}
      &\jump{a}_{\hat{\bm{i}}_{1,+}}=a_{i_1+1,i_2,i_3}-a_{i_1,i_2,i_3},~\jump{a}_{\hat{\bm{i}}_{2,-}}=a_{i_1,i_2,i_3}-a_{i_1,i_2-1,i_3}, \\
      &\mean{a}_{\hat{\bm{i}}_{1,+}}=(a_{i_1+1,i_2,i_3}+a_{i_1,i_2,i_3})/2,~\mean{a}_{\hat{\bm{i}}_{2,-}}=(a_{i_1,i_2,i_3}+a_{i_1,i_2-1,i_3})/2.
    \end{align*}
\end{lem}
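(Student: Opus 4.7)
The approach is to mimic, at the semi-discrete level, the continuous derivation that led to \eqref{eq:EntropyId_MM}: left-multiply the cell-balance \eqref{eq:semi_U} by $\bV_{\hp}^{\mathrm{T}}$, absorb the metric time derivative using the discrete VCL \eqref{eq:semi_J}, use the fact that the discrete SCLs \eqref{eq:SCL_dis} hold identically to insert a harmless $\psi_l$-term, and finally rewrite every ``pointwise value times a central difference'' as a ``central difference of an averaged quantity minus an interface jump residual''. What remains after telescoping is a pure divergence plus residuals that are exactly killed by the structural hypothesis \eqref{eq:ECConditionMM}.

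More concretely, using $\bV^{\mathrm{T}}\bU-\eta=\phi$, the time part becomes
\begin{equation*}
\bV_{\hp}^{\mathrm{T}}\od{(J\bU)_{\hp}}{t}=\od{(J_{\hp}\eta(\bU_{\hp}))}{t}+\phi_{\hp}\od{J_{\hp}}{t},
\end{equation*}
and \eqref{eq:semi_J} rewrites $\phi_{\hp}\od{J_{\hp}}{t}$ as $-\sum_{k}(\Delta\xi_k)^{-1}\phi_{\hp}\delta_k[J\pd{\xi_k}{t}]_{\hp}$. I would then add to the equation the identity $\sum_{k,l}(\Delta\xi_k)^{-1}\psi_{l,\hp}\delta_k[J\pd{\xi_k}{x_l}]_{\hp}=0$ furnished by \eqref{eq:SCL_dis}. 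At this point, I would invoke the elementary telescoping identity
\begin{equation*}
c_{\hp}\,\delta_k[a]_{\hp}=\delta_k\!\big[a\mean{c}\big]_{\hp}-\tfrac{1}{2}\Big(a_{\hat{\bm{i}}_{k,+}}\jump{c}_{\hat{\bm{i}}_{k,+}}+a_{\hat{\bm{i}}_{k,-}}\jump{c}_{\hat{\bm{i}}_{k,-}}\Big),
\end{equation*}
which follows from writing $c_{\hp}=\mean{c}_{\hat{\bm{i}}_{k,\pm}}\mp\tfrac{1}{2}\jump{c}_{\hat{\bm{i}}_{k,\pm}}$. Applying it with $(a,c)=(J\pd{\xi_k}{t},\phi)$ and $(a,c)=(J\pd{\xi_k}{x_l},\psi_l)$, together with its vector analogue using $\bV_{\hp}^{\mathrm{T}}$ in place of $c_{\hp}$ on the flux term $\bV_{\hp}^{\mathrm{T}}\delta_k[\widehat{\bF}_k]_{\hp}$, each rewriting produces a clean $\delta_k$ piece carrying one of $\mean{\bV}^{\mathrm{T}}\tbF_k$, $(J\pd{\xi_k}{t})\mean{\phi}$, $(J\pd{\xi_k}{x_l})\mean{\psi_l}$, plus a half-sum of interface jump products.

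The decisive step, and the main technical obstacle, is the cancellation of these residuals. Collecting them, at the interface $\hat{\bm{i}}_{k,\pm}$ one finds $-\tfrac{1}{2}\jump{\bV}_{\hat{\bm{i}}_{k,\pm}}^{\mathrm{T}}\tbF_{k,\hat{\bm{i}}_{k,\pm}}$ from the flux manipulation, and $+\tfrac{1}{2}(J\pd{\xi_k}{t})_{\hat{\bm{i}}_{k,\pm}}\jump{\phi}_{\hat{\bm{i}}_{k,\pm}}+\tfrac{1}{2}\sum_{l}(J\pd{\xi_k}{x_l})_{\hat{\bm{i}}_{k,\pm}}\jump{\psi_l}_{\hat{\bm{i}}_{k,\pm}}$ from the VCL/SCL manipulations. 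Hypothesis \eqref{eq:ECConditionMM} is precisely the statement that these two contributions agree, so they cancel interface by interface, leaving
\begin{equation*}
\od{(J_{\hp}\eta(\bU_{\hp}))}{t}+\sum_{k=1}^{3}\frac{1}{\Delta\xi_k}\delta_k\!\left[\mean{\bV}^{\mathrm{T}}\tbF_k-\left(J\pd{\xi_k}{t}\right)\mean{\phi}-\sum_{l=1}^{3}\left(J\pd{\xi_k}{x_l}\right)\mean{\psi_l}\right]_{\hp}=0,
\end{equation*}
which is the claimed semi-discrete entropy identity with the stated $\widetilde{q}_k$. To close, I would briefly verify that $\widetilde{q}_k$ is consistent with $J\pd{\xi_k}{t}\eta+\sum_{l}J\pd{\xi_k}{x_l}q_l$ by evaluating its formula at a coincident state and invoking $\bV^{\mathrm{T}}\bU-\eta=\phi$ and $\bV^{\mathrm{T}}\bF_l-q_l=\psi_l$ from \eqref{EQ:potential}, together with the consistency of $\tbF_k$.
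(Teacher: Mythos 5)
Your proposal is correct and follows essentially the same route as the paper's proof: multiply by $\bV_{\bm{i}}^\mathrm{T}$, absorb $\phi_{\bm{i}}\,\dd J_{\bm{i}}/\dd t$ via \eqref{eq:semi_J}, insert the vanishing $\psi_l$-terms via the discrete SCLs \eqref{eq:SCL_dis}, split each pointwise factor as $\mean{\cdot}\mp\tfrac12\jump{\cdot}$, and cancel the interface residuals with \eqref{eq:ECConditionMM}. The only difference is cosmetic — you package the splitting as a single telescoping identity rather than writing it out term by term.
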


\begin{proof}
	Multiplying \eqref{eq:semi_U} by $\bV_{\bm{i}}^\mathrm{T}$ left and using \eqref{eq:semi_J} gives
	\begin{align*}
	\dfrac{\dd}{\dd t}(J_{\bm{i}}\eta_{\bm{i}})=&
	-\sum_{k=1}^3\dfrac{1}{\Delta \xi_k} \left\{\bV_{\bm{i}}^\mathrm{T}\delta_k\left[\tbF_{k}\right]_{\bm{i}}
	-\phi_{\bm{i}}\delta_k\left[J\pd{\xi_k}{t}\right]_{\bm{i}} \right\}.
	\end{align*}
	Utilizing the discrete SCLs \eqref{eq:SCL_dis} gives
	\begin{align*}
	{\dfrac{\dd}{\dd t}(J_{\bm{i}}\eta_{\bm{i}})}=&
	-\sum_{k=1}^3\dfrac{1}{\Delta \xi_k} \left\{\bV_{\bm{i}}^\mathrm{T}\delta_k\left[\tbF_{k}\right]_{\bm{i}}
	-\phi_{\bm{i}}\delta_k\left[J\pd{\xi_k}{t}\right]_{\bm{i}}
	-\sum_{l=1}^3\psi_{l,\bm{i}}\delta_k\left[J\pd{\xi_k}{x_l}\right]_{\bm{i}}\right\}.
	\end{align*}
The term in braces at the right end of the above equation
can be further rearranged as follows
	\begin{align*}
	&\bV_{\bm{i}}^\mathrm{T}\delta_k\left[\tbF_{k}\right]_{\bm{i}}
	-\phi_{\bm{i}}\delta_k\left[J\pd{\xi_k}{t}\right]_{\bm{i}}
	-\sum_{l=1}^3\psi_{l,\bm{i}}\delta_k\left[J\pd{\xi_k}{x_l}\right]_{\bm{i}}\\
	=&\left(\mean{\bV}_{\hat{\bm{i}}_{k,+}}-\frac12\jump{\bV}_{\hat{\bm{i}}_{k,+}}\right)^\mathrm{T}\tbF_{k,\hat{\bm{i}}_{k,+}}
	-\left(\mean{\bV}_{\hat{\bm{i}}_{k,-}}+\frac12\jump{\bV}_{\hat{\bm{i}}_{k,-}}\right)^\mathrm{T}\tbF_{k,\hat{\bm{i}}_{k,-}} \\
	&-\left(\mean{\phi}_{\hat{\bm{i}}_{k,+}}-\frac12\jump{\phi}_{\hat{\bm{i}}_{k,+}}\right)\left(J\pd{\xi_k}{t}\right)_{\hat{\bm{i}}_{k,+}}
	+\left(\mean{\phi}_{\hat{\bm{i}}_{k,-}}+\frac12\jump{\phi}_{\hat{\bm{i}}_{k,-}}\right)\left(J\pd{\xi_k}{t}\right)_{\hat{\bm{i}}_{k,-}} \\
	&-\sum_{l=1}^3\left(\mean{\psi_l}_{\hat{\bm{i}}_{k,+}}-\frac12\jump{\psi_l}_{\hat{\bm{i}}_{k,+}}\right)\left(J\pd{\xi_k}{x_l}\right)_{\hat{\bm{i}}_{k,+}}
	+\sum_{l=1}^3\left(\mean{\psi_1}_{\hat{\bm{i}}_{k,-}}+\frac12\jump{\psi_1}_{\hat{\bm{i}}_{k,-}}\right)\left(J\pd{\xi_k}{x_l}\right)_{\hat{\bm{i}}_{k,-}} \\
	=&\left(\mean{\bV}_{\hat{\bm{i}}_{k,+}}^\mathrm{T}\tbF_{k,\hat{\bm{i}}_{k,+}}
	-\left(J\pd{\xi_k}{t}\right)_{\hat{\bm{i}}_{k,+}}\mean{\phi}_{\hat{\bm{i}}_{k,+}}
	-\sum_{l=1}^3\left(J\pd{\xi_k}{x_l}\right)_{\hat{\bm{i}}_{k,+}}\mean{\psi_l}_{\hat{\bm{i}}_{k,+}}\right) \\
	&-\left(\mean{\bV}_{\hat{\bm{i}}_{k,-}}^\mathrm{T}\tbF_{k,\hat{\bm{i}}_{k,-}}
	-\left(J\pd{\xi_k}{t}\right)_{\hat{\bm{i}}_{k,-}}\mean{\phi}_{\hat{\bm{i}}_{k,-}}
	-\sum_{l=1}^3\left(J\pd{\xi_k}{x_l}\right)_{\hat{\bm{i}}_{k,-}}\mean{\psi_l}_{\hat{\bm{i}}_{k,-}}\right) \\
	=&\widetilde{q}_{k,\hat{\bm{i}}_{k,+}}-\widetilde{q}_{k,\hat{\bm{i}}_{k,-}},
	\end{align*}
	where $a_{\bm{i}}=\mean{a}_{\hat{\bm{i}}_{k,+}}-\frac12\jump{a}_{\hat{\bm{i}}_{k,+}}$
	and $a_{\bm{i}}=\mean{a}_{\hat{\bm{i}}_{k,-}}+\frac12\jump{a}_{\hat{\bm{i}}_{k,-}}$ have been used in the first equality,
	and the condition \eqref{eq:ECConditionMM} has been used in the second equality.
	Moreover, it is easy to check the consistency of the numerical entropy fluxes $\widetilde q_{k,\hat{\bm{i}}_{k,\pm}}$ with
	$ J\pd{\xi_k}{t}\eta+\sum\limits_{l=1}^3J\pd{\xi_k}{x_l}q_l$.
	Thus the scheme \eqref{eq:semi_U} with
	$\widehat{\bF}_{k,\hat{\bm{i}}_{k,\pm}}= \tbF_{k,\hat{\bm{i}}_{k,\pm}}$
	is EC in the sense of
	\begin{align*}
	\dfrac{\dd}{\dd t}J_{\bm{i}}\eta(\bU_{\bm{i}}(t))
	+\sum_{k=1}^3\dfrac{1}{\Delta \xi_k}\delta_k\left[\widetilde{q}_{k}(t)\right]_{\bm{i}}=0.
	\end{align*}
\end{proof}

\begin{rmk}\rm
  The sufficient condition \eqref{eq:ECConditionMM} is different from that in \cite{Duan2020RHD}, due to
  the metrics introduced by the coordinate transformation.
\end{rmk}

\subsection{Discrete GCLs}\label{subsec:GCLs}
 For the transformation \eqref{eq:transf},
we have the following identities
\begin{equation*}
\begin{aligned}
J\pd{\xi_k}{t}=-\sum_{l=1}^3\pd{x_l}{t}\left(J\pd{\xi_k}{x_l}\right),~k=1,2,3,
\end{aligned}
\end{equation*}
and
\begin{equation*}
\begin{aligned}
J\pd{\xi_1}{x_1}=\pd{x_2}{\xi_2}\pd{x_3}{\xi_3}-\pd{x_2}{\xi_3}\pd{x_3}{\xi_2},~
J\pd{\xi_1}{x_2}=\pd{x_3}{\xi_2}\pd{x_1}{\xi_3}-\pd{x_3}{\xi_3}\pd{x_1}{\xi_2},~
J\pd{\xi_1}{x_3}=\pd{x_1}{\xi_2}\pd{x_2}{\xi_3}-\pd{x_1}{\xi_3}\pd{x_2}{\xi_2},\\
J\pd{\xi_2}{x_1}=\pd{x_2}{\xi_3}\pd{x_3}{\xi_1}-\pd{x_2}{\xi_1}\pd{x_3}{\xi_3},~
J\pd{\xi_2}{x_2}=\pd{x_3}{\xi_3}\pd{x_1}{\xi_1}-\pd{x_3}{\xi_1}\pd{x_1}{\xi_3},~
J\pd{\xi_2}{x_3}=\pd{x_1}{\xi_3}\pd{x_2}{\xi_1}-\pd{x_1}{\xi_1}\pd{x_2}{\xi_3},\\
J\pd{\xi_3}{x_1}=\pd{x_2}{\xi_1}\pd{x_3}{\xi_2}-\pd{x_2}{\xi_2}\pd{x_3}{\xi_1},~
J\pd{\xi_3}{x_2}=\pd{x_3}{\xi_1}\pd{x_1}{\xi_2}-\pd{x_3}{\xi_2}\pd{x_1}{\xi_1},~
J\pd{\xi_3}{x_3}=\pd{x_1}{\xi_1}\pd{x_2}{\xi_2}-\pd{x_1}{\xi_2}\pd{x_2}{\xi_1}.
\end{aligned}
\end{equation*}
The last nine identities can be reformulated into the divergence form 
\begin{equation}\label{eq:CMM_SCL}
\begin{aligned}
&J\pd{\xi_1}{x_1}=\dfrac{\partial}{\partial\xi_3}\left(\pd{x_2}{\xi_2}x_3\right)-\dfrac{\partial}{\partial\xi_2}\left(\pd{x_2}{\xi_3}x_3\right),~
J\pd{\xi_1}{x_2}=\dfrac{\partial}{\partial\xi_3}\left(\pd{x_3}{\xi_2}x_1\right)-\dfrac{\partial}{\partial\xi_2}\left(\pd{x_3}{\xi_3}x_1\right),\\
&J\pd{\xi_1}{x_3}=\dfrac{\partial}{\partial\xi_3}\left(\pd{x_1}{\xi_2}x_2\right)-\dfrac{\partial}{\partial\xi_2}\left(\pd{x_1}{\xi_3}x_2\right),\\
&J\pd{\xi_2}{x_1}=\dfrac{\partial}{\partial\xi_1}\left(\pd{x_2}{\xi_3}x_3\right)-\dfrac{\partial}{\partial\xi_3}\left(\pd{x_2}{\xi_1}x_3\right),~
J\pd{\xi_2}{x_2}=\dfrac{\partial}{\partial\xi_1}\left(\pd{x_3}{\xi_3}x_1\right)-\dfrac{\partial}{\partial\xi_3}\left(\pd{x_3}{\xi_1}x_1\right),\\
&J\pd{\xi_2}{x_3}=\dfrac{\partial}{\partial\xi_1}\left(\pd{x_1}{\xi_3}x_2\right)-\dfrac{\partial}{\partial\xi_3}\left(\pd{x_1}{\xi_1}x_2\right),\\
&J\pd{\xi_3}{x_1}=\dfrac{\partial}{\partial\xi_2}\left(\pd{x_2}{\xi_1}x_3\right)-\dfrac{\partial}{\partial\xi_1}\left(\pd{x_2}{\xi_2}x_3\right),~
J\pd{\xi_3}{x_2}=\dfrac{\partial}{\partial\xi_2}\left(\pd{x_3}{\xi_1}x_1\right)-\dfrac{\partial}{\partial\xi_1}\left(\pd{x_3}{\xi_2}x_1\right),\\
&J\pd{\xi_3}{x_3}=\dfrac{\partial}{\partial\xi_2}\left(\pd{x_1}{\xi_1}x_2\right)
-\dfrac{\partial}{\partial\xi_1}\left(\pd{x_1}{\xi_2}x_2\right),
\end{aligned}
\end{equation}
which are useful to compute the discrete metrics and to get the discrete SCLs approximating conservatively \eqref{eq:GCL}
by the so-called conservative metrics method \cite{Thomas1979}.

To establish the discrete SCLs \eqref{eq:SCL_dis},
 using the same discretizations for the first-order
spatial derivatives in \eqref{eq:CMM_SCL} as those
in \eqref{eq:semi_U}-\eqref{eq:semi_J}
gives
\begin{equation}\label{eq:SCLCoeff}
	\begin{aligned}
	&\left(J\pd{\xi_1}{x_1}\right)_{\hat{\bm{i}}_{1,+}}=\dfrac{1}{\Delta\xi_2\Delta\xi_3}
	\left(\delta_3\left[\delta_2\left[x_2\right]\avy{x_3}\right]-\delta_2\left[\delta_3\left[x_2\right]\avz{x_3}\right]\right),\\
	&\left(J\pd{\xi_1}{x_2}\right)_{\hat{\bm{i}}_{1,+}}=\dfrac{1}{\Delta\xi_2\Delta\xi_3}
	\left(\delta_3\left[\delta_2\left[x_3\right]\avy{x_1}\right]-\delta_2\left[\delta_3\left[x_3\right]\avz{x_1}\right]\right),\\
	&\left(J\pd{\xi_1}{x_3}\right)_{\hat{\bm{i}}_{1,+}}=\dfrac{1}{\Delta\xi_2\Delta\xi_3}
	\left(\delta_3\left[\delta_2\left[x_1\right]\avy{x_2}\right]-\delta_2\left[\delta_3\left[x_1\right]\avz{x_2}\right]\right),\\
	&\left(J\pd{\xi_2}{x_1}\right)_{\hat{\bm{i}}_{2,+}}=\dfrac{1}{\Delta\xi_3\Delta\xi_1}
	\left(\delta_1\left[\delta_3\left[x_2\right]\avz{x_3}\right]-\delta_3\left[\delta_1\left[x_2\right]\avx{x_3}\right]\right),\\
	&\left(J\pd{\xi_2}{x_2}\right)_{\hat{\bm{i}}_{2,+}}=\dfrac{1}{\Delta\xi_3\Delta\xi_1}
	\left(\delta_1\left[\delta_3\left[x_3\right]\avz{x_1}\right]-\delta_3\left[\delta_1\left[x_3\right]\avx{x_1}\right]\right),\\
	&\left(J\pd{\xi_2}{x_3}\right)_{\hat{\bm{i}}_{2,+}}=\dfrac{1}{\Delta\xi_3\Delta\xi_1}
	\left(\delta_1\left[\delta_3\left[x_1\right]\avz{x_2}\right]-\delta_3\left[\delta_1\left[x_1\right]\avx{x_2}\right]\right),\\
	&\left(J\pd{\xi_3}{x_1}\right)_{\hat{\bm{i}}_{3,+}}=\dfrac{1}{\Delta\xi_1\Delta\xi_2}
	\left(\delta_2\left[\delta_1\left[x_2\right]\avx{x_3}\right]-\delta_1\left[\delta_2\left[x_2\right]\avy{x_3}\right]\right),\\
	&\left(J\pd{\xi_3}{x_2}\right)_{\hat{\bm{i}}_{3,+}}=\dfrac{1}{\Delta\xi_1\Delta\xi_2}
	\left(\delta_2\left[\delta_1\left[x_3\right]\avx{x_1}\right]-\delta_1\left[\delta_2\left[x_3\right]\avy{x_1}\right]\right),\\
	&\left(J\pd{\xi_3}{x_3}\right)_{\hat{\bm{i}}_{3,+}}=\dfrac{1}{\Delta\xi_1\Delta\xi_2}
	\left(\delta_1\left[\delta_1\left[x_1\right]\avx{x_2}\right]-\delta_1\left[\delta_2\left[x_1\right]\avy{x_2}\right]\right),
	\end{aligned}
\end{equation}
where $\avx{a},\avy{a},\avz{a}$ denote the averages in the $i_1,i_2,i_3$-directions, respectively.
To be more specific, the right hand-side (RHS) of the first equation in
\eqref{eq:SCLCoeff} can be expanded as follows
\begin{align*}
  \dfrac{1}{2\Delta\xi_2\Delta\xi_3}\Big\{
  &\left[(x_2)_{i_1+\frac12,i_2+\frac12,i_3+\frac12}-(x_2)_{i_1+\frac12,i_2-\frac12,i_3+\frac12}\right]
  \left[(x_3)_{i_1+\frac12,i_2+\frac12,i_3+\frac12}+(x_3)_{i_1+\frac12,i_2-\frac12,i_3+\frac12}\right]\\
 -&\left[(x_2)_{i_1+\frac12,i_2+\frac12,i_3-\frac12}-(x_2)_{i_1+\frac12,i_2-\frac12,i_3-\frac12}\right]
  \left[(x_3)_{i_1+\frac12,i_2+\frac12,i_3-\frac12}+(x_3)_{i_1+\frac12,i_2-\frac12,i_3-\frac12}\right]\\
 -&\left[(x_2)_{i_1+\frac12,i_2+\frac12,i_3+\frac12}-(x_2)_{i_1+\frac12,i_2+\frac12,i_3-\frac12}\right]
  \left[(x_3)_{i_1+\frac12,i_2+\frac12,i_3+\frac12}+(x_3)_{i_1+\frac12,i_2+\frac12,i_3-\frac12}\right]\\
 +&\left[(x_2)_{i_1+\frac12,i_2-\frac12,i_3+\frac12}-(x_2)_{i_1+\frac12,i_2-\frac12,i_3-\frac12}\right]
  \left[(x_3)_{i_1+\frac12,i_2-\frac12,i_3+\frac12}+(x_3)_{i_1+\frac12,i_2-\frac12,i_3-\frac12}\right]
  \Big\}.
\end{align*}
Based on the above discretizations, it can be verified that the SCLs \eqref{eq:SCL_dis} are satisfied.
For example,
\begin{align*}
   \sum_{k=1}^3\dfrac{1}{\Delta \xi_k}\delta_k\left[J\pd{\xi_k}{x_1}\right]_{\bm{i}}
  =\dfrac{1}{2\Delta\xi_1\Delta\xi_2\Delta\xi_3}
  \Big(&\delta_1\delta_3\left[\delta_2\left[x_2\right]\avy{x_3}\right]-\delta_1\delta_2\left[\delta_3\left[x_2\right]\avz{x_3}\right]
  +\delta_2\delta_1\left[\delta_3\left[x_2\right]\avz{x_3}\right]\\
  &-\delta_2\delta_3\left[\delta_1\left[x_2\right]\avx{x_3}\right]
  +\delta_3\delta_2\left[\delta_1\left[x_2\right]\avx{x_3}\right]-\delta_3\delta_1\left[\delta_2\left[x_2\right]\avy{x_3}\right]\Big)=0,
\end{align*}
since $\delta_l$ and $\delta_k$ are commutative, i.e. $\delta_l\delta_k=\delta_k\delta_l$.

The following lemma tells us that the scheme also preserves the free-stream states by
integrating \eqref{eq:semi_U}-\eqref{eq:semi_J} with the same explicit SSP
RK  schemes \cite{Gottlieb2001Strong}.

\begin{lem}\label{lem:GCL}\rm
	If the semi-discrete scheme \eqref{eq:semi_U}-\eqref{eq:semi_J} is
  integrated in time with the
	explicit SSP RK scheme from $t=t_n$ to
$t_{n+1}=t_n+\Delta t_n$, then
 the resulting fully-discrete scheme preserves the free-stream states.
\end{lem}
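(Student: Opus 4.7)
The plan is to argue by induction across the stages of the explicit SSP RK scheme, exploiting the fact that every such scheme is a convex combination of forward Euler substeps applied to \eqref{eq:semi_U}--\eqref{eq:semi_J} with the stage metrics. Let $\bU_0$ be a free-stream state, write $\bF_{l,0}=\bF_l(\bU_0)$, and suppose that at the start of stage $s$ one has $(J\bU)^{(s)}_{\bm{i}} = J^{(s)}_{\bm{i}}\bU_0$ in every cell, which is trivially true at $t=t_n$. The goal is to show that a single forward Euler substep propagates this relation to stage $s+1$; the SSP RK update then preserves it as a convex combination, and dividing by $J^{n+1}_{\bm{i}}\neq 0$ yields $\bU^{n+1}_{\bm{i}}=\bU_0$.

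For the forward Euler substep, I would insert $\bU=\bU_0$ (and hence $\bF_l=\bF_{l,0}$) into the two-point EC flux. By consistency of $\tbF_k$ with $J\pd{\xi_k}{t}\bU+\sum_{l}J\pd{\xi_k}{x_l}\bF_l$, the numerical flux at stage $s$ collapses to
\begin{equation*}
\widehat{\bF}_{k,\hat{\bm{i}}_{k,\pm}} = \left(J\pd{\xi_k}{t}\right)^{(s)}_{\hat{\bm{i}}_{k,\pm}}\bU_0
+ \sum_{l=1}^{3}\left(J\pd{\xi_k}{x_l}\right)^{(s)}_{\hat{\bm{i}}_{k,\pm}}\bF_{l,0}.
\end{equation*}
Substituting into \eqref{eq:semi_U}, the contributions involving each $\bF_{l,0}$ are of the form $\bF_{l,0}\sum_k\frac{1}{\Delta\xi_k}\delta_k[J\pd{\xi_k}{x_l}]^{(s)}_{\bm{i}}$, which vanish by the discrete SCLs \eqref{eq:SCL_dis}. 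What remains is
\begin{equation*}
(J\bU)^{(s+1)}_{\bm{i}} - (J\bU)^{(s)}_{\bm{i}}
= -\Delta t\,\bU_0\sum_{k=1}^{3}\frac{1}{\Delta\xi_k}\delta_k\!\left[J\pd{\xi_k}{t}\right]^{(s)}_{\bm{i}}.
\end{equation*}
The same time step applied to \eqref{eq:semi_J} yields $J^{(s+1)}_{\bm{i}} - J^{(s)}_{\bm{i}} = -\Delta t\sum_{k}\frac{1}{\Delta\xi_k}\delta_k[J\pd{\xi_k}{t}]^{(s)}_{\bm{i}}$, so combining with the induction hypothesis gives $(J\bU)^{(s+1)}_{\bm{i}} = J^{(s+1)}_{\bm{i}}\bU_0$.

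Completing the induction over all stages, every convex combination of these substeps still satisfies $(J\bU)=J\bU_0$, so the fully-discrete updated state at $t_{n+1}$ remains $\bU_0$ pointwise, which is free-stream preservation.

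The main obstacle is ensuring that the discrete SCLs \eqref{eq:SCL_dis} used above hold at \emph{every} intermediate RK stage, not only at $t_n$. This is where the conservative metrics method is essential: the SCL identities \eqref{eq:SCL_dis} derived from \eqref{eq:SCLCoeff} are purely algebraic consequences of the commutativity $\delta_k\delta_l=\delta_l\delta_k$ applied to the stage mesh coordinates, so whenever $(J\pd{\xi_k}{x_l})^{(s)}$ is (re)computed on the current stage mesh via \eqref{eq:SCLCoeff}, the SCLs are automatically satisfied. Correspondingly, the grid velocity metrics $(J\pd{\xi_k}{t})^{(s)}$ also need to be consistently recomputed at each stage, so that the same metric data used in \eqref{eq:semi_U} appears in \eqref{eq:semi_J}; this consistency is the single ingredient the argument really needs, and once it is in place the induction is purely mechanical.
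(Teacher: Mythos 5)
Your proposal is correct and follows essentially the same route as the paper: reduce the SSP RK step to forward Euler substeps (the paper invokes the convex-combination property where you run an explicit stage-wise induction), use consistency of the two-point flux at the constant state, kill the $\bF_l(\bU_0)$ contributions via the discrete SCLs \eqref{eq:SCL_dis}, and match the remaining $\bU_0$ terms with the discrete VCL update of $J_{\bm{i}}$. Your closing observation that the SCLs hold at every stage because the metrics are recomputed from the stage mesh via \eqref{eq:SCLCoeff} is a point the paper leaves implicit, but it is consistent with its construction and does not change the argument.
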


\begin{proof}
	The forward Euler time discretization is only considered here, since the explicit SSP RK schemes
	are a convex combination of the forward Euler time discretizations.
	Assuming that $\bU_{\bm{i}}^n=\bU_0$ is a physical constant state and the time step size is $\Delta t_n$,
	then the update of the metric Jacobian $J_{\bm{i}}$ and the solution $\bU_{\bm{i}}$ can be rewritten as follows
	\begin{align*}
	J_{\bm{i}}^{n+1}=&J_{\bm{i}}^{n}-\sum_{k=1}^3\dfrac{\Delta t_n}{\Delta \xi_k}\delta_k\left[J\pd{\xi_k}{t}\right]_{\bm{i}},\\
	(J\bU)_{\bm{i}}^{n+1}=&(J\bU)_{\bm{i}}^{n}-\sum_{k=1}^3\dfrac{\Delta t_n}{\Delta \xi_k}\delta_k\left[\widehat{\bF}_k\right]_{\bm{i}}\\
	=&J_{\bm{i}}^n\bU_0-\sum_{k=1}^3\dfrac{\Delta t_n}{\Delta \xi_k}\Bigg[\left(J\pd{\xi_k}{t}\right)_{\hat{\bm{i}}_{k,+}}\bU_0
	+\sum\limits_{l=1}^3 \left(J\pd{\xi_k}{x_l}\right)_{\hat{\bm{i}}_{k,+}}\bF_l(\bU_0) \\
	&-\left(J\pd{\xi_k}{t}\right)_{\hat{\bm{i}}_{k,-}}\bU_0
	-\sum\limits_{l=1}^3 \left(J\pd{\xi_k}{x_l}\right)_{\hat{\bm{i}}_{k,-}}\bF_l(\bU_0)\Bigg]\\
	=&\left(J_{\bm{i}}^{n}-\sum_{k=1}^3\dfrac{\Delta t_n}{\Delta \xi_k}\delta_k\left[J\pd{\xi_k}{t}\right]_{\bm{i}}\right)\bU_0
	-\sum_{l=1}^3\left(\sum_{k=1}^3\dfrac{\Delta t_n}{\Delta \xi_k}\delta_k\left[J\pd{\xi_k}{x_l}\right]_{\bm{i}}\right)\bF_l(\bU_0)\\
	=&J_{\bm{i}}^{n+1}\bU_0,
	\end{align*}
	where the discrete GCLs have been used in the last equality. Thus $\bU_{\bm{i}}^{n+1}=(J\bU)_{\bm{i}}^{n+1}/J_{\bm{i}}^{n+1}=\bU_0$.
The proof is completed.
\end{proof}

  In the above proof, no specific form of the ``fluxes'' $\left(J\partial_t{\xi_k}\right)_{\hat{\bm{i}}_{k,\pm}}$ in \eqref{eq:semi_J}
  are given. Two suggested  versions of the ``fluxes''   $\left(J\partial_t{\xi_k}\right)_{\hat{\bm{i}}_{k,\pm}}$
   are presented here and compared below. It is worth noting that they do not affect the conclusion of Lemma \ref{lem:GCL}.
  The first version  is given by
\begin{equation}\label{eq:VCLCoeff}
\begin{aligned}
&\left(J\pd{\xi_1}{t}\right)_{\hat{\bm{i}}_{1,\pm}}=-\sum_{l=1}^3(\dot{x_l})_{\hat{\bm{i}}_{1,\pm}}\left(J\pd{\xi_1}{x_l}\right)_{\hat{\bm{i}}_{1,\pm}},\\
&\left(J\pd{\xi_2}{t}\right)_{\hat{\bm{i}}_{2,\pm}}=-\sum_{l=1}^3(\dot{x_l})_{\hat{\bm{i}}_{2,\pm}}\left(J\pd{\xi_2}{x_l}\right)_{\hat{\bm{i}}_{2,\pm}},\\
&\left(J\pd{\xi_3}{t}\right)_{\hat{\bm{i}}_{3,\pm}}=-\sum_{l=1}^3(\dot{x_l})_{\hat{\bm{i}}_{3,\pm}}\left(J\pd{\xi_3}{x_l}\right)_{\hat{\bm{i}}_{3,\pm}},
\end{aligned}
\end{equation}
where the ``mesh'' velocities { $(\dot{\bx})_{\hat{\bm{i}}_{1,\pm}},
  (\dot{\bx})_{\hat{\bm{i}}_{2,\pm}}$, and
  $(\dot{\bx})_{\hat{\bm{i}}_{3,\pm}}$} may be calculated by the arithmetic mean, e.g.
\begin{align*}
(\dot{\bx})_{i_1+\frac12,i_2,i_3}=\dfrac14
\left[(\dot{\bx})_{i_1+\frac12,i_2-\frac12,i_3-\frac12}
+(\dot{\bx})_{i_1+\frac12,i_2-\frac12,i_3+\frac12}
+(\dot{\bx})_{i_1+\frac12,i_2+\frac12,i_3-\frac12}
+(\dot{\bx})_{i_1+\frac12,i_2+\frac12,i_3+\frac12}\right].
\end{align*}
Here $(\dot{\bx})_{i_1+\frac12,i_2+\frac12,i_3+\frac12}$ is the mesh
velocity of the mesh point $({\bx})_{i_1+\frac12,i_2+\frac12,i_3+\frac12}$,
which will be provided by solving the mesh equations in Section \ref{section:MM}.  Combining \eqref{eq:VCLCoeff} with
\eqref{eq:semi_J} gives our first semi-discrete VCL, denoted by {\tt VCL1}, which is easy to be implemented.

  The  second version of the ``fluxes''   $\left(J\partial_t{\xi_k}\right)_{\hat{\bm{i}}_{k,\pm}}$ is based on the reformulations of the Jacobian and the temporal metrics \cite{Abe2013Conservative} as follows
{\small \begin{equation}\label{eq:CMM_VCL}
\begin{aligned}
J=&\dfrac{\partial}{\partial\xi_3}\left\{\left[\dfrac{\partial}{\partial\xi_2}\left(\pd{x_1}{\xi_1}x_2\right)
-\dfrac{\partial}{\partial\xi_1}\left(\pd{x_1}{\xi_2}x_2\right)\right]x_3\right\}
+\dfrac{\partial}{\partial\xi_2}\left\{\left[\dfrac{\partial}{\partial\xi_1}\left(\pd{x_1}{\xi_3}x_2\right)
-\dfrac{\partial}{\partial\xi_3}\left(\pd{x_1}{\xi_1}x_2\right)\right]x_3\right\} \\
&+\dfrac{\partial}{\partial\xi_1}\left\{\left[\dfrac{\partial}{\partial\xi_3}\left(\pd{x_1}{\xi_2}x_2\right)
-\dfrac{\partial}{\partial\xi_2}\left(\pd{x_1}{\xi_3}x_2\right)\right]x_3\right\},\\
J\pd{\xi_1}{t}=&\dfrac{\partial}{\partial\xi_2}\left\{\left[\dfrac{\partial}{\partial\xi_3}\left(\pd{x_1}{ t}x_2\right)
-\dfrac{\partial}{\partial t}\left(\pd{x_1}{\xi_3}x_2\right)\right]x_3\right\}
+\dfrac{\partial}{\partial\xi_3}\left\{\left[\dfrac{\partial}{\partial t}\left(\pd{x_1}{\xi_2}x_2\right)
-\dfrac{\partial}{\partial\xi_2}\left(\pd{x_1}{ t}x_2\right)\right]x_3\right\} \\
&+\dfrac{\partial}{\partial t}\left\{\left[\dfrac{\partial}{\partial\xi_2}\left(\pd{x_1}{\xi_3}x_2\right)
-\dfrac{\partial}{\partial\xi_3}\left(\pd{x_1}{\xi_2}x_2\right)\right]x_3\right\}, \\
J\pd{\xi_2}{t}=&\dfrac{\partial}{\partial\xi_3}\left\{\left[\dfrac{\partial}{\partial\xi_1}\left(\pd{x_1}{ t}x_2\right)
-\dfrac{\partial}{\partial t}\left(\pd{x_1}{\xi_1}x_2\right)\right]x_3\right\}
+\dfrac{\partial}{\partial t}\left\{\left[\dfrac{\partial}{\partial\xi_3}\left(\pd{x_1}{\xi_1}x_2\right)
-\dfrac{\partial}{\partial\xi_1}\left(\pd{x_1}{\xi_3}x_2\right)\right]x_3\right\} \\
&+\dfrac{\partial}{\partial\xi_1}\left\{\left[\dfrac{\partial}{\partial t}\left(\pd{x_1}{\xi_3}x_2\right)
-\dfrac{\partial}{\partial\xi_3}\left(\pd{x_1}{ t}x_2\right)\right]x_3\right\}, \\
J\pd{\xi_3}{t}=&\dfrac{\partial}{\partial t}\left\{\left[\dfrac{\partial}{\partial\xi_1}\left(\pd{x_1}{\xi_2}x_2\right)
-\dfrac{\partial}{\partial\xi_2}\left(\pd{x_1}{\xi_1}x_2\right)\right]x_3\right\}
+\dfrac{\partial}{\partial\xi_1}\left\{\left[\dfrac{\partial}{\partial\xi_2}\left(\pd{x_1}{ t}x_2\right)
-\dfrac{\partial}{\partial t}\left(\pd{x_1}{\xi_2}x_2\right)\right]x_3\right\} \\
&+\dfrac{\partial}{\partial\xi_2}\left\{\left[\dfrac{\partial}{\partial t}\left(\pd{x_1}{\xi_1}x_2\right)
-\dfrac{\partial}{\partial\xi_1}\left(\pd{x_1}{ t}x_2\right)\right]x_3\right\},
\end{aligned}
\end{equation}}%
and  the second-order central difference and average  approximated
the spatial derivatives, similar to \eqref{eq:SCLCoeff},
thus the  VCL in 
\eqref{eq:semi_J} is approximated in space  by
\begin{align}\nonumber
\dfrac{\dd J_{\bm{i}}}{\dd t}=&-\sum_{l=1}^3\dfrac{1}{\Delta\xi_k}\delta_k\left[J\pd{\xi_k}{t}\right]_{\bm{i}} \\ \nonumber
=&-\dfrac{1}{\Delta\xi_1}\delta_1\left[\dfrac{\partial}{\partial t}\left\{\left[\dfrac{\partial}{\partial\xi_2}\left(\pd{x_1}{\xi_3}x_2\right)
-\dfrac{\partial}{\partial\xi_3}\left(\pd{x_1}{\xi_2}x_2\right)\right]x_3\right\}\right]_{\bm{i}} \\ \nonumber
&-\dfrac{1}{\Delta\xi_2}\delta_2\left[\dfrac{\partial}{\partial t}\left\{\left[\dfrac{\partial}{\partial\xi_3}\left(\pd{x_1}{\xi_1}x_2\right)
-\dfrac{\partial}{\partial\xi_1}\left(\pd{x_1}{\xi_3}x_2\right)\right]x_3\right\}\right]_{\bm{i}} \\ \nonumber
&-\dfrac{1}{\Delta\xi_3}\delta_3\left[\dfrac{\partial}{\partial t}\left\{\left[\dfrac{\partial}{\partial\xi_1}\left(\pd{x_1}{\xi_2}x_2\right)
-\dfrac{\partial}{\partial\xi_2}\left(\pd{x_1}{\xi_1}x_2\right)\right]x_3\right\}\right]_{\bm{i}}\\
=:&-\dfrac{1}{\Delta\xi_1}\delta_1\left[\dfrac{\partial}{\partial t}A_1\right]_{\bm{i}}
   -\dfrac{1}{\Delta\xi_2}\delta_2\left[\dfrac{\partial}{\partial t}A_2\right]_{\bm{i}}
   -\dfrac{1}{\Delta\xi_3}\delta_3\left[\dfrac{\partial}{\partial t}A_3\right]_{\bm{i}},\label{eq:VCL_update}
\end{align}
which gives our second semi-discrete VCL, denoted by {\tt VCL2}.
Obviously, it requires more operation, but it
can well approach to the value of the Jacobian $J$ calculated by the first equation of \eqref{eq:CMM_VCL}.
In fact,
if the mesh trajectories are assumed to be linear in time as follows
\begin{equation}\label{eq:mesh_linear}
\bx(t)=\dfrac{t_{n+1}-t}{\Delta t_n}\bx^n + \dfrac{t-t_n}{\Delta t_n}\bx^{n+1},~t\in[t_n,t_{n+1}],~\Delta t_n=t_{n+1}-t_n,
\end{equation}
inspired by \cite{Pathak2016Adaptive},
then the terms  $A_k,k=1,2,3$ are cubic polynomials of $t$,
so that $\dfrac{\partial}{\partial t}A_k$ is a quadratic polynomial of $t$, which can be expressed as
\begin{align}\nonumber
\dfrac{\partial}{\partial t}A_k
=&\dfrac{1}{2(\Delta t_n)^3}\Bigg[(\Delta t_n)^2\left(-11A_k^{n}+18A_k^{n+\frac13}-9A_k^{n+\frac23}+2A_k^{n+1}\right)
\\ \nonumber
&+18\Delta t_n\left(2A_k^{n}-5A_k^{n+\frac13}+4A_k^{n+\frac23}-A_k^{n+1}\right)(t-t_n)\\
&-27\left(A_k^{n}-3A_k^{n+\frac13}+3A_k^{n+\frac23}-A_k^{n+1}\right)(t-t_n)^2\Bigg],
\label{eq:quadratic} \end{align}
where the superscript denotes the value at corresponding time level.
If  following the first equation in \eqref{eq:CMM_VCL} to compute $J$ at time $t_m$ by
\begin{equation}\label{eq:J_discrete}
J^m_{\bm{i}}=-\sum_{k=1}^3\dfrac{1}{\Delta\xi_k}\delta_k\left[A_k^m\right]_{\bm{i}},
\end{equation}
then  substituting \eqref{eq:quadratic} into \eqref{eq:VCL_update} and using \eqref{eq:J_discrete} gives
\begin{align}\nonumber
\dfrac{\dd J}{\dd t}=&\dfrac{1}{2(\Delta t_n)^3}\Bigg[
(\Delta t_n)^2\left(-11J^{n}+18J^{n+\frac13}-9J^{n+\frac23}+2J^{n+1}\right)\\
\nonumber &+18\Delta t_n\left(2J^{n}-5J^{n+\frac13}+4J^{n+\frac23}-J^{n+1}\right)(t-t_n)\\
&-27\left(J^{n}-3J^{n+\frac13}+3J^{n+\frac23}-J^{n+1}\right)(t-t_n)^2\Bigg].
\label{eq:J_ODE} \end{align}
Here $J^{n},J^{n+\frac13},J^{n+\frac23},J^{n+1}$ are  known, so that \eqref{eq:J_ODE} is a linear ordinary differential equation (ODE)
with the RHS of a quadratic polynomial of $t$.
If the third-order SSP RK method is used to integrate \eqref{eq:J_ODE}, then it will hold
exactly. In the 2D case, it will be a linear ODE with the RHS of a linear polynomial of $t$, so the second-order SSP RK method is enough.

\begin{rmk}
  In the 3D case, the second-order SSP RK method can approximate \eqref{eq:semi_J} to the second-order accuracy and no obvious difference of the solutions is found between  the SSP second- and third-order and  RK methods, see the numerical results in Section \ref{section:Num}.  
\end{rmk}

\subsection{EC flux}
What follows is to find an EC flux satisfying \eqref{eq:ECConditionMM}.
For the second-order accurate scheme,  we choose the EC flux as follows
\begin{align}\label{eq:ECfluxMM}
\tbF_{k,\hat{\bm{i}}_{k,\pm}}=\left(J\pd{\xi_k}{t}\right)_{\hat{\bm{i}}_{k,\pm}}\widetilde{\bU}_{\hat{\bm{i}}_{k,\pm}}^{\rm RHD}
+\sum_{l=1}^3\left(J\pd{\xi_k}{x_l}\right)_{\hat{\bm{i}}_{k,\pm}}\widetilde{\bF}_{l,\hat{\bm{i}}_{k,\pm}}^{\rm RHD},
\end{align}
where $\widetilde{\bF}_{l,\hat{\bm{i}}_{k,\pm}}^{\rm RHD}$ is the EC flux
of the RHD equations on the static mesh satisfying
\begin{align*}
&\jump{\bV}_{\hat{\bm{i}}_{k,\pm}}^\mathrm{T}{\tbF}_{l,\hat{\bm{i}}_{k,\pm}}^{\rm RHD}=\jump{\psi_l}_{\hat{\bm{i}}_{k,\pm}},
\end{align*}
and $\widetilde{\bU}_{\hat{\bm{i}}_{k,\pm}}^{\rm RHD}$ is  obtained by the same procedure in \cite{Duan2020RMHD} satisfying
\begin{align*}
&\jump{\bV}_{\hat{\bm{i}}_{k,\pm}}^\mathrm{T}\widetilde{\bU}_{\hat{\bm{i}}_{k,\pm}}^{\rm RHD}=\jump{\phi}_{\hat{\bm{i}}_{k,\pm}}.
\end{align*}
For example, the specific expressions of $\tbF_1^{\rm RHD}$ and $\widetilde{\bU}^{\rm RHD}$ are respectively given as follows
\begin{align*}
&\tbF_1^{\rm RHD}=
\begin{pmatrix}
\meanln{\rho}\mean{W\vx},\\
\dfrac{\mean{W\vx}}{\mean{W}} \tbF_{1,5}^{\rm RHD}+\dfrac{\mean{\rho}}{\mean{\beta}}\\
\dfrac{\mean{W\vy}}{\mean{W}} \tbF_{1,5}^{\rm RHD}\\
\dfrac{\mean{W\vz}}{\mean{W}} \tbF_{1,5}^{\rm RHD}\\
\left(\mean{W}^2-\sum_{k=1}^3\mean{Wv_k}^2\right)^{-1}\mean{W}
\left(\mean{\rho}\mean{W\vx}/\mean{\beta}+\alpha_0\tbF_{1,1}^{\rm RHD}\right)
\end{pmatrix}, \\
&\widetilde{\bU}^{\rm RHD}=
\begin{pmatrix}
\meanln{\rho}\mean{W}\\
\dfrac{\mean{W\vx}}{\mean{W}}\left(\dfrac{\mean{\rho}}{\mean{\beta}}+\widetilde{\bU}_5^{\rm RHD}\right)\\
\dfrac{\mean{W\vy}}{\mean{W}}\left(\dfrac{\mean{\rho}}{\mean{\beta}}+\widetilde{\bU}_5^{\rm RHD}\right)\\
\dfrac{\mean{W\vz}}{\mean{W}}\left(\dfrac{\mean{\rho}}{\mean{\beta}}+\widetilde{\bU}_5^{\rm RHD}\right)\\
\left(\mean{W}^2-\sum_{k=1}^3\mean{Wv_k}^2\right)^{-1}\mean{W}
\left(\dfrac{\mean{\rho}\sum_{k=1}^3\mean{Wv_k}^2}{\mean{\beta}\mean{W}}
+\meanln{\rho}\mean{W}\alpha_0\right)
\end{pmatrix},
\end{align*}
where $\meanln{a}=\jump{a}/\jump{\ln{a}}$ is the logarithmic mean, see   \cite{Ismail2009Affordable},
 $\alpha_0=1+1/(\Gamma-1)/\meanln{\beta}$, $\beta=\rho/p$,
and $\tbF_{1,5}^{\rm RHD}$ and $\widetilde{\bU}_5^{\rm RHD}$ denote the $5$-th component of $\tbF_1^{\rm RHD}$ and $\widetilde{\bU}^{\rm RHD}$, respectively.

\subsection{ES schemes}
It is known that for the quasi-linear hyperbolic conservation laws,
the entropy identity is available only if the solution is smooth.
In other words, the entropy is not conserved if the discontinuities such as the
shock waves appear in the solution. Moreover, the EC scheme may produce serious nonphysical oscillations near
the discontinuities. Those motivate us to develop the ES scheme (satisfying the entropy inequality for the given entropy pair) in this section by adding
a suitable dissipation term to the EC flux \eqref{eq:ECfluxMM}.

Following \cite{Tadmor1987The}, adding a dissipation term to the EC flux
$\tbF_{k,\hat{\bm{i}}_{k,\pm}}$ gives the ES flux
\begin{align}\label{eq:ESFluxMM}
&\widehat{\bF}_{k,\hat{\bm{i}}_{k,\pm}}=\tbF_{k,\hat{\bm{i}}_{k,\pm}}-\dfrac12 \bm{D}_{\hat{\bm{i}}_{k,\pm}}\jump{\bV}_{\hat{\bm{i}}_{k,\pm}},
\end{align}
satisfying
\begin{align}
\jump{\bV}_{\hat{\bm{i}}_{k,\pm}}^\mathrm{T}\cdot\widehat{\bF}_{k,\hat{\bm{i}}_{k,\pm}}
-\left(J\pd{\xi_k}{t}\right)_{\hat{\bm{i}}_{k,\pm}}\jump{\phi}_{\hat{\bm{i}}_{k,\pm}}
-\sum_{l=1}^3\left(J\pd{\xi_k}{x_l}\right)_{\hat{\bm{i}}_{k,\pm}}\jump{\psi_l}_{\hat{\bm{i}}_{k,\pm}}\leqslant 0,
\end{align}
where $\bm{D}_{\hat{\bm{i}}_{k,\pm}}$ is a symmetric positive semi-definite matrix.
It is easy to prove that the scheme \eqref{eq:semi_U}-\eqref{eq:semi_J} with the numerical flux \eqref{eq:ESFluxMM}
is ES, that is,  it satisfies the semi-discrete entropy inequality
\begin{align*}
	\dfrac{\dd}{\dd t}J_{\bm{i}}\eta(\bU_{\bm{i}}(t))
+\sum_{k=1}^3\dfrac{1}{\Delta \xi_k}\delta_k\left[\widehat{q}_{k}(t)\right]_{\bm{i}}\leqslant0,
\end{align*}
with the numerical entropy flux   
\begin{equation*}
  \widehat{q}_{k,\hat{\bm{i}}_{k,\pm}}=\widetilde{q}_{k,\hat{\bm{i}}_{k,\pm}}
  -\dfrac12\mean{\bV}_{\hat{\bm{i}}_{k,\pm}}\bm{D}_{\hat{\bm{i}}_{k,\pm}}
  \jump{\bV}_{\hat{\bm{i}}_{k,\pm}},
\end{equation*}
being consistent with the continuous entropy flux
$J\pd{\xi_k}{t}\eta+\sum\limits_{l=1}^3J\pd{\xi_k}{x_l}q_l$.

Let us give a choice of $\bm{D}_{\hat{\bm{i}}_{k,\pm}}$ in the ES flux \eqref{eq:ESFluxMM}.
According to \cite{Merriam1989An}, there exists a set of scaled eigenvectors $\bm{R}$ such that
\begin{equation*}
\pd{\bU}{\bV}=\bm{R}\bm{R}^\mathrm{T},\
\pd{\bF_1}{\bU}=\bm{R}\bm{\Lambda}\bm{R}^{-1},\
\bm{\Lambda}=\mbox{diag}\{\lambda_1,\ldots,\lambda_5\},
\end{equation*}
where the eigenvalues $\lambda_1,\ldots,\lambda_5$ are given by
$$ \lambda_1=\lambda_-, \ \lambda_5=\lambda_+, \
 \lambda_\ell=v_1, \ \ell=2,3,4, \
\lambda_{\pm}=\dfrac{v_1(1-c_s^2)\pm
    c_s/W\sqrt{1-v_1^2-(\abs{\bv}^2-v_1^2)c_s^2}}{1-\abs{\bv}^2c_s^2},
    $$ and
  \begin{align*}
  \bm{R}=
  \begin{bmatrix}
  	1                            & 1/W & Wv_2           & Wv_3           & 1                            \\
  	hW\mathcal{A}_{-}\lambda_{-} & v_1 & 2hW^2v_1v_2    & 2hW^2v_1v_3    & hW\mathcal{A}_{+}\lambda_{+} \\
  	hWv_2                        & v_2 & h(1+2W^2v_2^2) & 2hW^2v_2v_3    & hWv_2                        \\
  	hWv_3                        & v_3 & 2hW^2v_2v_3    & h(1+2W^2v_3^2) & hWv_3                        \\
  	hW\mathcal{A}_{-}            & 1   & 2hW^2v_2       & 2hW^2v_3       & hW\mathcal{A}_{+}
  \end{bmatrix} \\ \times
  \begin{bmatrix}
  	\sqrt{\frac{\mathcal{B}-\mathcal{C}}{2}} & 0                                        & 0                                           & 0                                                  & 0                                        \\
  	0                                        & \sqrt{\frac{(\Gamma-1)\rho W^3}{\Gamma}} & 0                                           & 0                                                  & 0                                        \\
  	0                                        & 0                                        & \sqrt{\frac{pW(1-v_1^2-v_2^2)}{h(1-v_1^2)}} & 0                                                  & 0                                        \\
  	0                                        & 0                                        & -v_2v_3\sqrt{\frac{pW}{h(1-v_1^2)(1-v_1^2-v_2^2)}}                                           & \sqrt{\frac{p}{hW(1-v_1^2-v_2^2)}}                 & 0                                        \\
  	0                                        & 0                                        & 0                                           & 0 & \sqrt{\frac{\mathcal{B}+\mathcal{C}}{2}}
  \end{bmatrix},
  \end{align*}
  here $\mathcal{A}_{\pm}=\dfrac{1-v_1^2}{1-v_1\lambda_{\pm}}$,
  $\mathcal{B}=\dfrac{\rho W(1-v_1^2-(\abs{\bv}^2-v_1^2)c_s^2)}{\Gamma(1-v_1^2)}$,
  $\mathcal{C}=\dfrac{\rho v_1c_s\sqrt{1-v_1^2-(\abs{\bv}^2-v_1^2)c_s^2}}{\Gamma(1-v_1^2)}$.
}
Using the rotational invariance gives
\begin{align*}
 &\partial{\left(J\pd{\xi_k}{t}\bU+\sum\limits_{l=1}^3 J\pd{\xi_k}{x_l}\bF_l\right)}/{\partial\bU} \\
=&\partial{\left(J\pd{\xi_k}{t}\bU+L_k\bT^{-1}\bF_1(\bT\bU)\right)}/{\partial\bU} \\
=&J\pd{\xi_k}{t}\bm{I}+L_k\bT^{-1}\bm{R}(\bT\bU)\bm{\Lambda}(\bT\bU)\bm{R}^{-1}(\bT\bU)\bT\\
=&\bT^{-1}\bm{R}(\bT\bU)\left(J\pd{\xi_k}{t}\bm{I}+L_k\bm{\Lambda}(\bT\bU)\right)\bm{R}^{-1}(\bT\bU)\bT,
\end{align*}
where $L_k=\sqrt{\sum\limits_{l=1}^3\left(J\pd{\xi_k}{x_l}\right)^2}$,
and $\bT$ denotes the ``rotational'' matrix defined by
\begin{align*}
&\bT =
\begin{bmatrix}
	1 & 0                      & 0                      & 0           & 0 \\
	0 & \cos\varphi\cos\theta  & \cos\varphi\sin\theta  & \sin\varphi & 0 \\
	0 & -\sin\theta            & \cos\theta             & 0           & 0 \\
	0 & -\sin\varphi\cos\theta & -\sin\varphi\sin\theta & \cos\varphi & 0 \\
	0 & 0                      & 0                      & 0           & 1
\end{bmatrix},\\
&\theta = \arctan\left(\left(J\pd{\xi_k}{x_2}\right)\Big/\left(J\pd{\xi_k}{x_1}\right)\right),\\
&\varphi = \arctan\left(\left(J\pd{\xi_k}{x_3}\right)\Bigg/\sqrt{\left(J\pd{\xi_k}{x_1}\right)^2+\left(J\pd{\xi_k}{x_2}\right)^2}\right).
\end{align*}
Then following the dissipation term in the Roe scheme yields
 \begin{align*}
  &-\dfrac12\bT^{-1}\bm{R}(\bT\bU)\left|J\pd{\xi_k}{t}\bm{I}+L_k\bm{\Lambda}(\bT\bU)\right|\bm{R}^{-1}(\bT\bU)\bT\jump{\bU} \\
 =&-\dfrac12\bT^{-1}\bm{R}(\bT\bU)\left|J\pd{\xi_k}{t}\bm{I}+L_k\bm{\Lambda}(\bT\bU)\right|\bm{R}^{-1}(\bT\bU)\jump{\bT\bU} \\
 \approx&-\dfrac12\bT^{-1}\bm{R}(\bT\bU)\left|J\pd{\xi_k}{t}\bm{I}+L_k\bm{\Lambda}(\bT\bU)\right|\bm{R}^{-1}(\bT\bU)
 \bm{R}(\bT\bU)\bm{R}^\mathrm{T}(\bT\bU)\bT\jump{\bV} \\
 =&-\dfrac12\bT^{-1}\bm{R}(\bT\bU)\left|J\pd{\xi_k}{t}\bm{I}+L_k\bm{\Lambda}(\bT\bU)\right|\bm{R}^\mathrm{T}(\bT\bU)\bT\jump{\bV}.
 \end{align*}
Based on that, the matrix $\bm{D}_{\hat{\bm{i}}_{k,\pm}}$ in \eqref{eq:ESFluxMM} can be chosen as follows (evaluated at the {interface point} ${\hat{\bm{i}}_{k,\pm}}$)
\begin{equation*}
\bm{D}=\bT^{-1}\bm{R}(\bT\bU)\left|J\pd{\xi_k}{t}\bm{I}+L_k\bm{\Lambda}(\bT\bU)\right|\bm{R}^\mathrm{T}(\bT\bU)\bT,
\end{equation*}
where the matrix $\left|J\pd{\xi_k}{t}\bm{I}+L_k\bm{\Lambda}(\bT\bU)\right|$ is taken as
\begin{equation*}
\left|J\pd{\xi_k}{t}\bm{I}+L_k\bm{\Lambda}(\bT\bU)\right|:=\max\left\{ \left| J\pd{\xi_k}{t}+L_k\lambda_1(\bT\bU) \right|,\dots, \left| J\pd{\xi_k}{t}+L_k\lambda_5(\bT\bU) \right|\right\}\bm{I},
\end{equation*}
and the values of $\left|J\pd{\xi_k}{t}\bm{I}+L_k\bm{\Lambda}(\bT\bU)\right|_{\hat{\bm{i}}_{k,\pm}}$
and $\bm{R}_{\hat{\bm{i}}_{k,\pm}}(\bT\bU)$ are calculated by
using the arithmetic mean values of the left and right states.

To obtain a second-order accurate ES scheme, the dissipation term in \eqref{eq:ESFluxMM} has to be improved.
Here the second-order TVD reconstruction is performed in the scaled entropy variables
$\bw=\bm{R}^\mathrm{T}\bV$.
More specifically,  the linear reconstruction of $\bw$ with the minmod limiter is used in the $i_k$-direction to obtain
the left and right limit values at $\hat{\bm{i}}_{k,+}$, denoted by
$\bw_{\hat{\bm{i}}_{k,+}}^-$ and $\bw_{\hat{\bm{i}}_{k,+}}^+$, and then to define
\begin{equation*}
\jumpangle{\bw}_{\hat{\bm{i}}_{k,+}}=\bw_{\hat{\bm{i}}_{k,+}}^+
-\bw_{\hat{\bm{i}}_{k,+}}^-.
\end{equation*}
Because of the ``sign'' property
\begin{equation*}
\text{sign}(\jumpangle{\bw}_{\hat{\bm{i}}_{k,+}})
=\text{sign}(\jump{\bw}_{\hat{\bm{i}}_{k,+}}),
\end{equation*}
utilizing the reconstructed jump in the dissipation term can give the following second-order ES scheme
\begin{equation}\label{eq:ES2nd}
\dfrac{\dd}{\dd t}(J\bU)_{\bm{i}}=
-\sum_{k=1}^3\dfrac{1}{\Delta \xi_k}\delta_k\left[\widehat{\bF}_k^{\mbox{\scriptsize 2nd}}\right]_{\bm{i}},
\end{equation}
where
\begin{equation}\label{eq:HOstable}
\widehat{\bF}_{k,\hat{\bm{i}}_{k,\pm}}^{\mbox{\scriptsize 2nd}}=\tbF_{k,\hat{\bm{i}}_{k,\pm}}
-\dfrac12 \bm{D}_{\hat{\bm{i}}_{k,\pm}}\jumpangle{\bw}_{\hat{\bm{i}}_{k,\pm}}.
\end{equation}

\begin{rmk}
  The ES schemes  preserve the free-stream states since the dissipation terms are given by using the jump of the entropy variables,
 which vanish as the solution is a constant state.
\end{rmk}

\section{Adaptive moving mesh strategy}\label{section:MM}
This section  presents our adaptive moving mesh strategy at time $t=t_n$,
 but focus on the mesh iteration redistribution. The dependence of the variables on $t$ will be omitted, unless otherwise stated.
Consider the following mesh adaption functional 
\begin{equation}\label{eq:mesh_func}
  \widetilde{E}(\bx)=\dfrac12\sum_{k=1}^3\int_{\Omega_l}\left(\nabla_{\bm{\xi}}x_k\right)^\mathrm{T}\bm{G}_k\left(\nabla_{\bm{\xi}}x_k\right)\dd\bm{\xi},
\end{equation}
where $\bm{G}_k$ is the given symmetric positive definite matrix, depending on the solution $\bU$.
More terms can be added to the above functional to control other aspects of the mesh such as the orthogonality
and the alignment with a given vector field, see e.g. \cite{Brackbill1993An,Brackbill1982Adaptive,Huang2011Adaptive}.
Solving the Euler-Lagrange equations of \eqref{eq:mesh_func}
\begin{equation}\label{eq:mesh_EL}
  \nabla_{\bm{\xi}}\cdot\left(\bm{G}_k\nabla_{\bm{\xi}}x_k\right)=0,
  ~\bm{\xi}\in\Omega_c,~k=1,2,3,
\end{equation}
 will give directly a coordinate transformation
$\bx=\bx(\bm{\xi})$ from the computational domain $\Omega_c$ to the physical domain $\Omega_p$.

The concentration of the mesh points is controlled by  $\bm{G}_k$,
which in general depends on the solutions or their derivatives of the
underlying governing equations and is one of the most important elements in the adaptive moving mesh method. Different problems may be equipped with different $\bm{G}_k$.
Following  the Winslow variable diffusion method \cite{Winslow1967Numerical}, the simplest choice of $\bm{G}_k$ is
\begin{equation}
  \bm{G}_k=\omega\bm{I}_3,
\end{equation}
where $\omega$ is a positive weight function, called the monitor function.
For example, $\omega$ can be taken as
\begin{align}\label{eq:monitor}
\omega=\sqrt{1+\alpha{\abs{\nabla_{\bm{\xi}}\sigma}}/{\max\abs{\nabla_{\bm{\xi}}\sigma}}},
\end{align}
where $\sigma$ is some physical variable and $\alpha>0$ is a parameter.
There are several other
choices of the monitor functions, see
\cite{Cao1999A,Han2007An,He2012RHD,Tang2006A,Tang2003An}.

\begin{rmk}\rm
  The monitor function is computed from the solutions of the underlying physical equations,
  thus is not smooth in general. To get a smoother (adaptive) mesh, the following low pass filter
  \begin{align*}
  \omega_{i_1,i_2,i_3}\leftarrow&\sum_{j_1,j_2,j_3=0,\pm 1}\left(\dfrac{1}{2}\right)^{\abs{j_1}+\abs{j_2}+\abs{j_3}+3}
  \omega_{i_1+j_1,i_2+j_2,i_3+j_3},
  \end{align*}
  is applied $2\sim 3$ times in this work.
\end{rmk}

The mesh equations \eqref{eq:mesh_EL} are approximated by the central difference scheme
on the computational mesh and then solved by using the Jacobi iteration method
\begin{equation*}
\begin{aligned}
  &\omega_{i_1+1,i_2+\frac12,i_3+\frac12}\left(\bx_{i_1+\frac32,i_2+\frac12,i_3+\frac12}^{[\nu]}-\bx_{i_1+\frac12,i_2+\frac12,i_3+\frac12}^{[\nu+1]}\right)\\
 -&\omega_{i_1,i_2+\frac12,i_3+\frac12}\left(\bx_{i_1+\frac12,i_2+\frac12,i_3+\frac12}^{[\nu+1]}-\bx_{i_1-\frac12,i_2+\frac12,i_3+\frac12}^{[\nu]}\right)\\
 +&\omega_{i_1+\frac12,i_2+1,i_3+\frac12}\left(\bx_{i_1+\frac12,i_2+\frac32,i_3+\frac12}^{[\nu]}-\bx_{i_1+\frac12,i_2+\frac12,i_3+\frac12}^{[\nu+1]}\right)\\
 -&\omega_{i_1+\frac12,i_2,i_3+\frac12}\left(\bx_{i_1+\frac12,i_2+\frac12,i_3+\frac12}^{[\nu+1]}-\bx_{i_1+\frac12,i_2-\frac12,i_3+\frac12}^{[\nu]}\right)\\
 +&\omega_{i_1+\frac12,i_2+\frac12,i_3+1}\left(\bx_{i_1+\frac12,i_2+\frac12,i_3+\frac32}^{[\nu]}-\bx_{i_1+\frac12,i_2+\frac12,i_3+\frac12}^{[\nu+1]}\right)\\
 -&\omega_{i_1+\frac12,i_2+\frac12,i_3}\left(\bx_{i_1+\frac12,i_2
 +\frac12,i_3+\frac12}^{[\nu+1]}-\bx_{i_1+\frac12,i_2
 +\frac12,i_3-\frac12}^{[\nu]}\right)=0, \ \nu=0,1,\cdots,\mu,
\end{aligned}
\end{equation*}
in parallel, where ${\bx}^{[0]}_{i_1+\frac12,i_2+\frac12,i_3+\frac12}
  :=\bx^n_{i_1+\frac12,i_2+\frac12,i_3+\frac12}$, and
 the values of $\omega$ are obtained by averaging the values of $\omega$ computed from the solutions $\bU$ at $t^n$, e.g.
\begin{align*}
  \omega_{i_1+1,i_2+\frac12,i_3+\frac12}:=\dfrac14\left(
  \omega_{i_1+1,i_2+1,i_3+1}+\omega_{i_1+1,i_2+1,i_3}+\omega_{i_1+1,i_2,i_3+1}+\omega_{i_1+1,i_2,i_3}
  \right).
\end{align*}
In our numerical tests, the total iteration number $\mu$ is taken as $10$, unless otherwise stated.

Once the  mesh $\{{\bx}^{[\mu]}_{i_1+\frac12,i_2+\frac12,i_3+\frac12}\}$ is obtained,
the final adaptive mesh  is given by
\begin{equation*}
\bx^{n+1}_{i_1+\frac12,i_2+\frac12,i_3+\frac12}
:=\bx^n_{i_1+\frac12,i_2+\frac12,i_3+\frac12}
+  
{\Delta_\tau}   (\delta_\tau{\bx})^{n}_{i_1+\frac12,i_2+\frac12,i_3+\frac12},
%
\end{equation*}
%
%
where
\begin{equation*}
 (\delta_\tau {\bx})^{n}_{i_1+\frac12,i_2+\frac12,i_3+\frac12}:=
   {\bx}^{[\mu]}_{i_1+\frac12,i_2+\frac12,i_3+\frac12}
  -\bx^n_{i_1+\frac12,i_2+\frac12,i_3+\frac12},
\end{equation*}
and the parameter ${\Delta_\tau}$
 is used  to limit the movement of mesh points
\begin{equation*}
  {\Delta_\tau}\leqslant
  \begin{cases}
  -\frac{1}{2(\delta_\tau{x_1})_{i_1+\frac12,i_2+\frac12,i_3+\frac12}}\left[(x_1)^n_{i_1+\frac12,i_2+\frac12,i_3+\frac12}-(x_1)^n_{i_1-\frac12,i_2+\frac12,i_3+\frac12}\right],
  ~ (\delta_\tau{x_1})_{i_1+\frac12,i_2+\frac12,i_3+\frac12}<0, \\
  \frac{1}{2(\delta_\tau{x_1})_{i_1+\frac12,i_2+\frac12,i_3+\frac12}}\left[(x_1)^n_{i_1+\frac32,i_2+\frac12,i_3+\frac12}-(x_1)^n_{i_1+\frac12,i_2+\frac12,i_3+\frac12}\right],
  ~ (\delta_\tau{x_1})_{i_1+\frac12,i_2+\frac12,i_3+\frac12}>0, \\
  -\frac{1}{2(\delta_\tau{x_2})_{i_1+\frac12,i_2+\frac12,i_3+\frac12}}\left[(x_2)^n_{i_1+\frac12,i_2+\frac12,i_3+\frac12}-(x_2)^n_{i_1+\frac12,i_2-\frac12,i_3+\frac12}\right],
  ~ (\delta_\tau{x_2})_{i_1+\frac12,i_2+\frac12,i_3+\frac12}<0, \\
  \frac{1}{2(\delta_\tau{x_2})_{i_1+\frac12,i_2+\frac12,i_3+\frac12}}\left[(x_2)^n_{i_1+\frac12,i_2+\frac32,i_3+\frac12}-(x_2)^n_{i_1+\frac12,i_2+\frac12,i_3+\frac12}\right],
  ~ (\delta_\tau{x_2})_{i_1+\frac12,i_2+\frac12,i_3+\frac12}>0, \\
  -\frac{1}{2(\delta_\tau{x_3})_{i_1+\frac12,i_2+\frac12,i_3+\frac12}}\left[(x_3)^n_{i_1+\frac12,i_2+\frac12,i_3+\frac12}-(x_3)^n_{i_1+\frac12,i_2+\frac12,i_3-\frac12}\right],
  ~ (\delta_\tau{x_3})_{i_1+\frac12,i_2+\frac12,i_3+\frac12}<0, \\
  \frac{1}{2(\delta_\tau{x_3})_{i_1+\frac12,i_2+\frac12,i_3+\frac12}}\left[(x_3)^n_{i_1+\frac12,i_2+\frac12,i_3+\frac32}-(x_3)^n_{i_1+\frac12,i_2+\frac12,i_3+\frac12}\right],
  ~ (\delta_\tau{x_3})_{i_1+\frac12,i_2+\frac12,i_3+\frac12}>0.
  \end{cases}
\end{equation*}
Finally, the mesh velocity at $t=t_n$ in \eqref{eq:VCLCoeff} is defined by
$$
\dot{\bx}^{n}_{i_1+\frac12,i_2+\frac12,i_3+\frac12}:=
{\Delta_\tau}   (\delta_\tau{\bx})^{n}_{i_1+\frac12,i_2+\frac12,i_3+\frac12}/\Delta t_n
$$
where $\Delta t_n$ is the time step size, determined by \eqref{eq:cfl}.

\section{Numerical results}\label{section:Num}
This section conducts several numerical experiments
to validate the performance of our schemes.
 Our schemes are implemented in parallel by utilizing the MPI parts of the PLUTO code \cite{Mignone2007PLUTO}, and all simulations are performed
 with the CPU nodes of the High-performance Computing Platform of Peking University
(Linux redhat environment, two Intel Xeon E5-2697A V4 (16 cores $\times2$) per node, and core frequency of 2.6GHz).
Unless otherwise stated, the adiabatic index $\Gamma$ is taken as $5/3$ and
the time step size $\Delta t_n$ is 
 determined by the usual CFL condition
\begin{equation}\label{eq:cfl}
 {\Delta t}_n\leqslant \dfrac{\text{CFL}}{\sum\limits_{k=1}^3\max\limits_{\bm{i}}
  {\varrho_{k,\bm{i}}^n}/{\Delta \xi_k}},
\end{equation}
where $\varrho_{k,\bm{i}}$ is the spectral radius of the eigen-matrix in the $i_k$-direction of \eqref{eq:semi_U}, and
the CFL number is taken as $0.4$ for 2D cases and $0.3$ for 3D cases.
Moreover, except for a comparison in Example \ref{ex:SyRP}, all numerical results are obtained by using  {\tt VCL1} and SSP second-order RK method (still denoted {\tt VCL1} for the sake of brevity).

\subsection{2D results}

\begin{example}[2D vortex problem]\label{ex:2DVortex}\rm
  This  2D relativistic isentropic vortex problem, being a modification of the vortex problem in \cite{Ling2019Physical}, is used to test the accuracy.  It describes a relativistic vortex moves with a constant speed of magnitude $w$ in $(-1,-1)$ direction. Specially,
  the initial rest-mass density, pressure   and  velocities are given by
  \begin{align*}
  &\rho(x_1,x_2)=(1-C_1 e^{1-r^2})^{\frac{1}{\Gamma-1}},\quad p=\rho^\Gamma,
 \\ 
  &v_1=\dfrac{1}{1-{w(\widetilde{v}_1+\widetilde{v}_2)}/\sqrt{2}}\left[\dfrac{\widetilde{v}_1}{\gamma}-\dfrac{w}{\sqrt{2}}+\dfrac{\gamma
    w^2}{2(\gamma+1)}(\widetilde{v}_1+\widetilde{v}_2)\right],\\
  &v_2=\dfrac{1}{1-{w(\widetilde{v}_1+\widetilde{v}_2)}/\sqrt{2}}\left[\dfrac{\widetilde{v}_2}{\gamma}-\dfrac{w}{\sqrt{2}}+\dfrac{\gamma
    w^2}{2(\gamma+1)}(\widetilde{v}_1+\widetilde{v}_2)\right],
  \end{align*}
  where
  \begin{align*}
  &C_1=\dfrac{(\Gamma-1)/\Gamma}{8\pi^2}\epsilon^2, \ r=\sqrt{\widetilde{x}_1^2+\widetilde{x}_2^2}, \
  \widetilde{x}_1=x_1+\dfrac{\gamma-1}{2}(x_1+x_2)-1,
  \\
  & \quad \widetilde{x}_2=x_2+\dfrac{\gamma-1}{2}(x_1+x_2)-1,\quad \gamma = \dfrac{1}{\sqrt{1-w^2}},
\\
& (\widetilde{v}_1,\widetilde{v}_2)=(-\widetilde{x}_2,\widetilde{x}_1)f, f=\sqrt{\dfrac{C_2}{1+C_2 r^2}},
  \quad C_2=\dfrac{2\Gamma C_1 e^{1-r^2}}{2\Gamma-1-\Gamma C_1 e^{1-r^2}}.
  \end{align*}
\end{example}
The computational domain $\Omega_c$ and the parameters $w$ and $\epsilon$ are  taken as  $[-5,5]\times [-5,5]$ with periodic boundary
conditions, $0.5\sqrt{2}$, and $5$, respectively,
the monitor function is chosen as \eqref{eq:monitor} with $\alpha=20,\sigma=\rho$,
and the number of the Jacobi iterations is $3$.
Table \ref{tab:2DVortex} lists the errors in the rest-mass density $\rho$ and orders of convergence obtained by using our ES moving mesh scheme with $N\times N$ cells.
It can be seen that the adaptive ES  scheme can achieve second-order accuracy.
Figure \ref{fig:2DVortex} plots the adaptive meshes of $N=40$ at different times, which show that
the concentration of the mesh points well follows the propagation of the vortex.
Figure \ref{fig:2DVortexEntropy} presents
 contour of $\rho$ with
40 equally spaced contour lines obtained by the adaptive ES scheme
and  the changes of the total entropy
 $\sum_{i_1,i_2} J_{i_1,i_2} \eta(\bU_{i_1,i_2})\Delta \xi_1\Delta \xi_2$
with respect to time obtained by the adaptive EC and ES schemes with $N=320$.
We can see that the total entropy of the adaptive EC scheme
almost keeps conservative, while the total entropy of the adaptive ES scheme  decays as expected.

\begin{table}[!ht]
  \centering
  \begin{tabular}{r|cc|cc|cc}
    \hline
    $N$ & $\ell^1$ error & order & $\ell^2$ error & order & $\ell^\infty$ error & order  \\ \hline
     20 & 1.371e-02 &  -   & 3.947e-02 &  -   & 2.360e-01 &  -   \\
    40 & 7.458e-03 & 0.88 & 1.999e-02 & 0.98 & 1.250e-01 & 0.92 \\
    80 & 2.385e-03 & 1.64 & 6.934e-03 & 1.53 & 5.217e-02 & 1.26 \\
    160 & 5.561e-04 & 2.10 & 1.817e-03 & 1.93 & 1.766e-02 & 1.56 \\
    320 & 1.251e-04 & 2.15 & 4.449e-04 & 2.03 & 4.723e-03 & 1.90 \\
    \hline
  \end{tabular}
  \caption{Example \ref{ex:2DVortex}: Errors and orders of convergence in $\rho$ at $t=4$.}
  \label{tab:2DVortex}
\end{table}

\begin{figure}[!ht]
  \centering
  \begin{subfigure}[b]{0.3\textwidth}
    \centering
    \includegraphics[width=1.0\textwidth]{./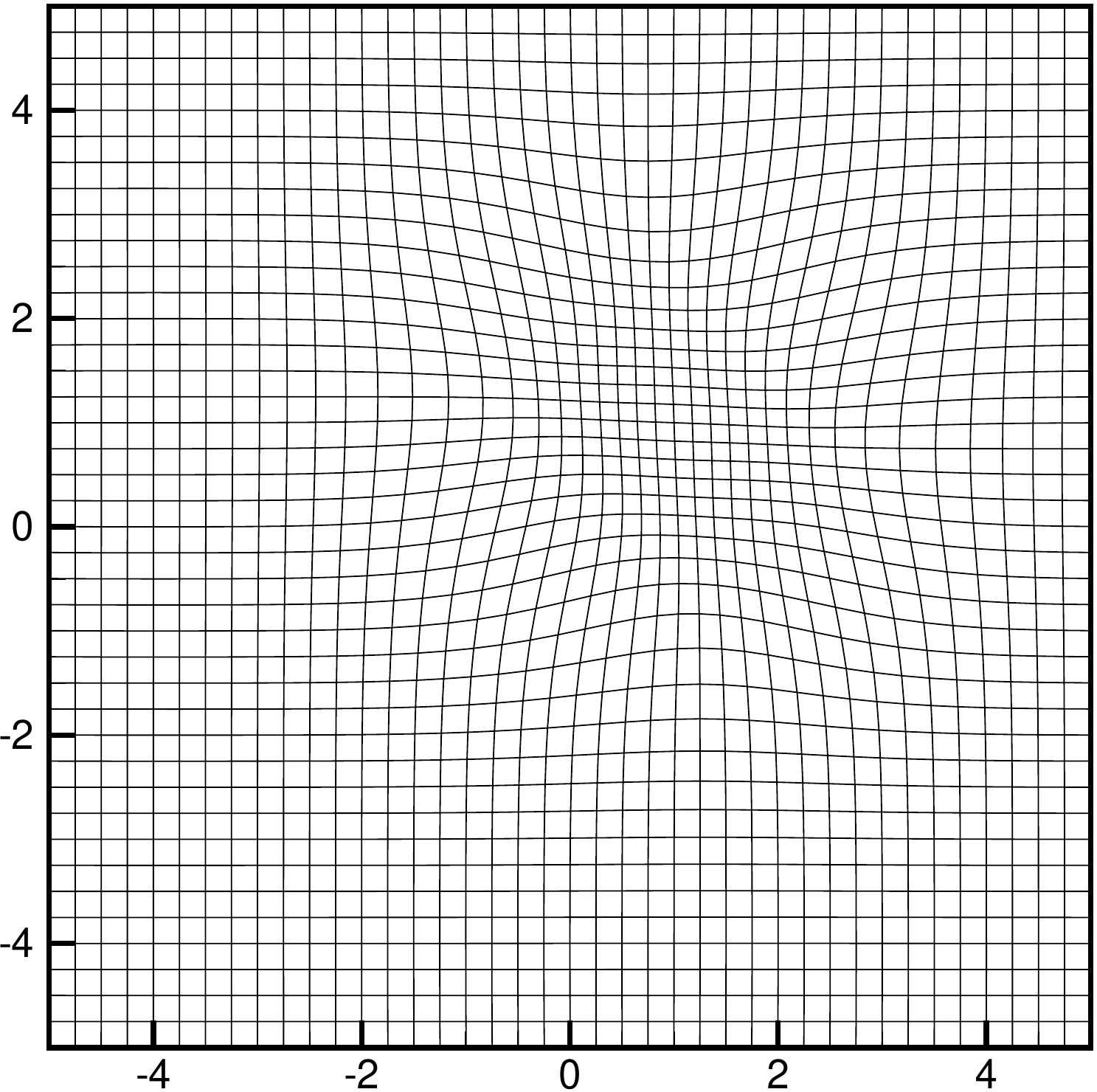}
    \caption{$t=0$}
  \end{subfigure}
  \begin{subfigure}[b]{0.3\textwidth}
    \centering
    \includegraphics[width=1.0\textwidth]{./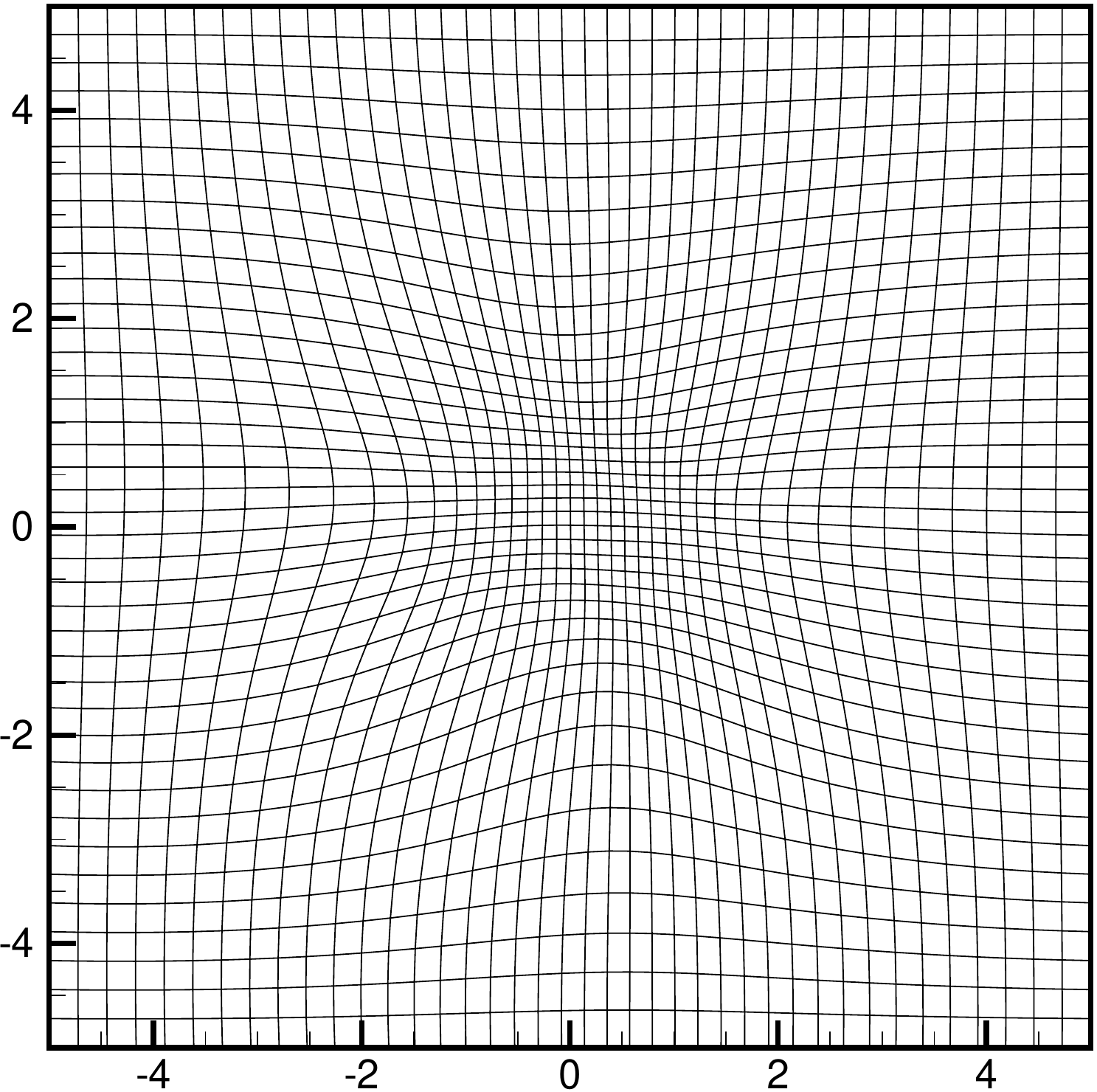}
    \caption{$t=2$}
  \end{subfigure}
  \begin{subfigure}[b]{0.3\textwidth}
    \centering
    \includegraphics[width=1.0\textwidth]{./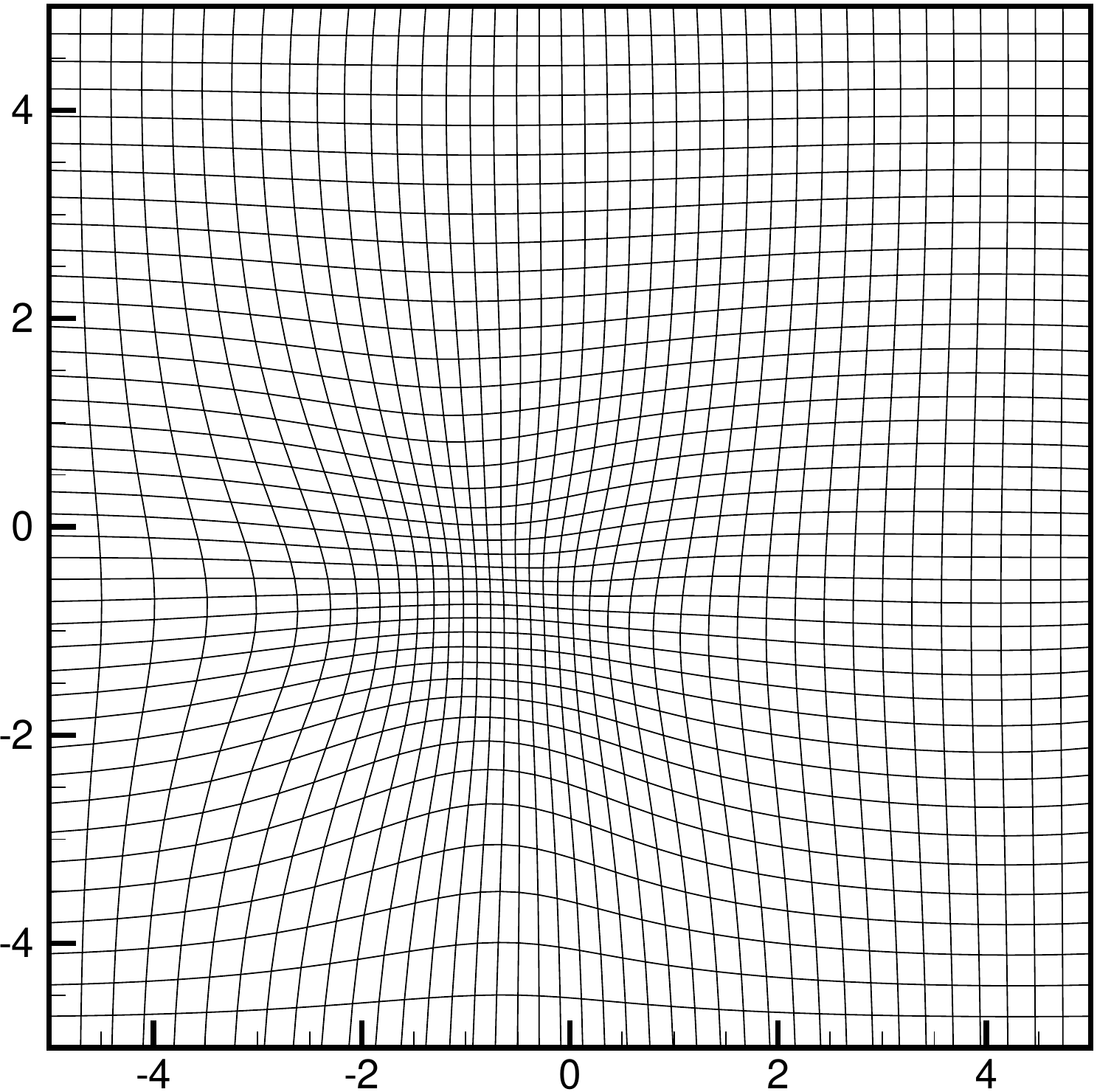}
    \caption{$t=4$}
  \end{subfigure}
  \caption{Example \ref{ex:2DVortex}: The adaptive meshes of $N=40$ at different times.}
  \label{fig:2DVortex}
\end{figure}

\begin{figure}[!ht]
  \centering
  \begin{subfigure}[b]{0.4\textwidth}
    \centering
    \includegraphics[width=1.0\textwidth]{./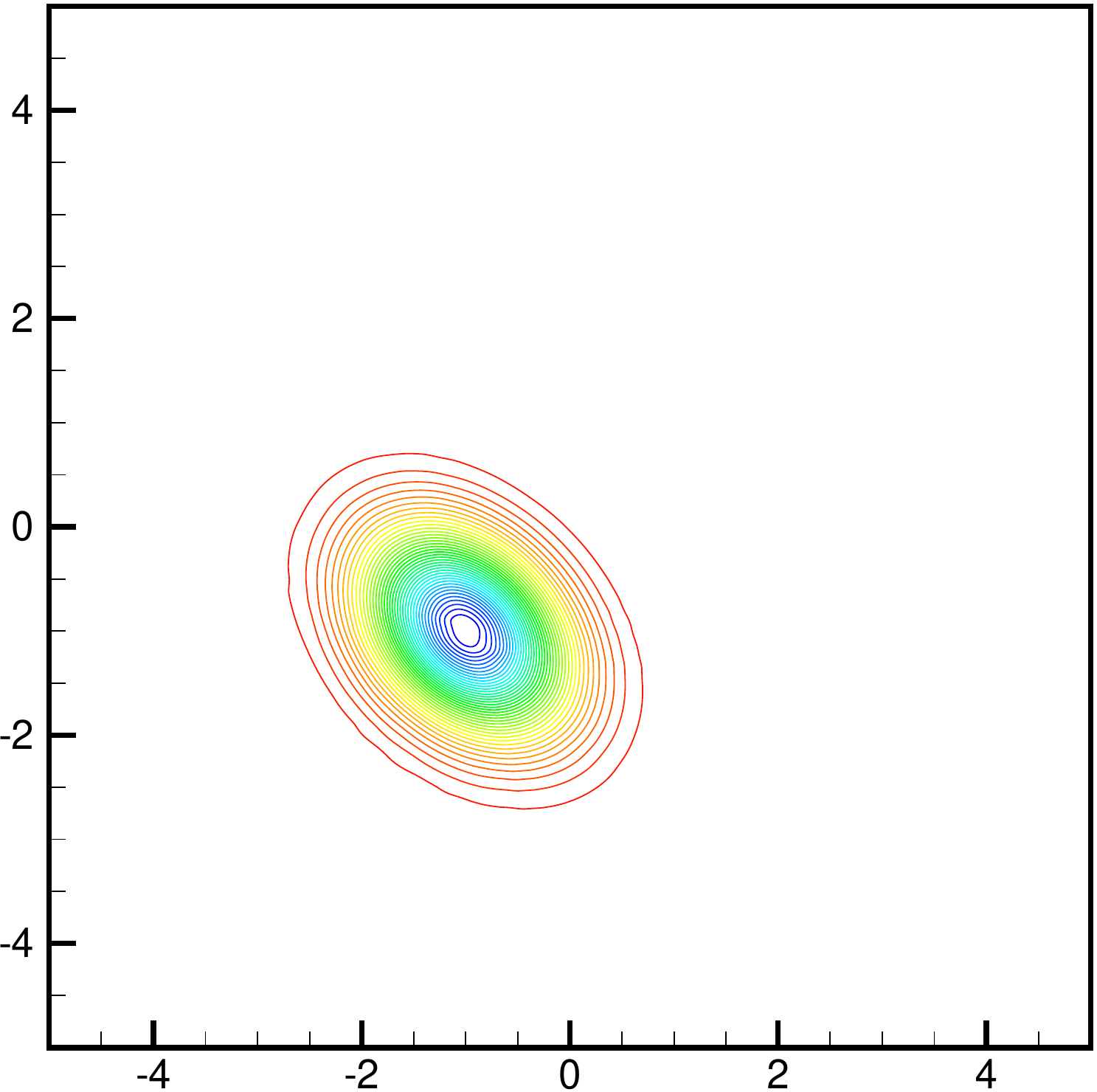}
  \end{subfigure}
  \begin{subfigure}[b]{0.45\textwidth}
    \centering
    \includegraphics[width=1.0\textwidth]{./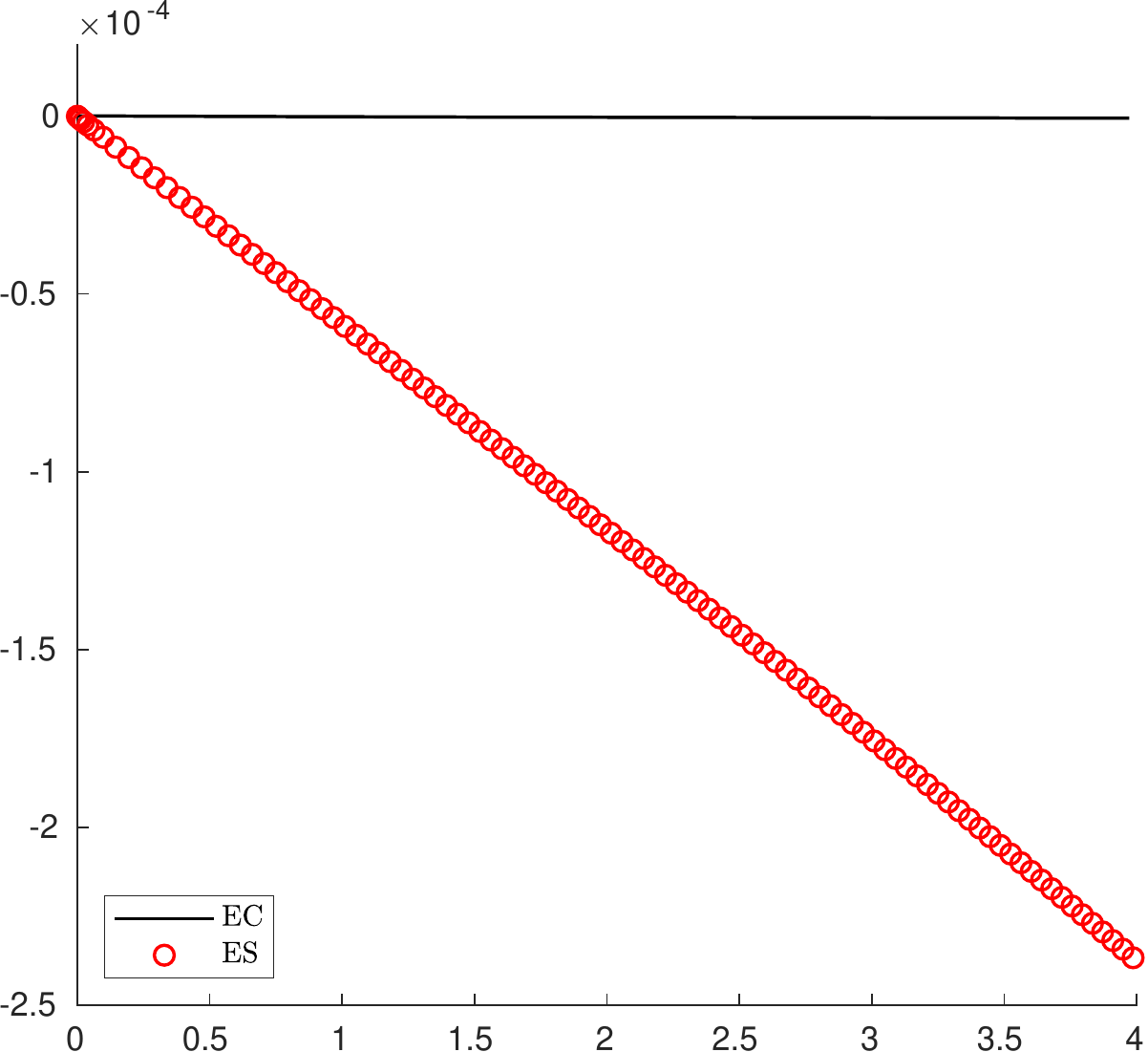}
  \end{subfigure}
  \caption{Example \ref{ex:2DVortex}.
    Left: 
     contour of $\rho$ with
40 equally spaced contour lines;
    right:  change of the total entropy in $t$.  $N=320$.}
  \label{fig:2DVortexEntropy}
\end{figure}

\begin{example}[Riemann problem \uppercase\expandafter{\romannumeral1}]\label{ex:2DRP1}\rm
  The initial data are
  \begin{align*}
    (\rho,v_1,v_2,p)=\begin{cases}
      (0.5,~0.5,-0.5,~5), &\quad x_1>0.5,~x_2>0.5,\\
      (1,~0.5,~0.5,~5),   &\quad x_1<0.5,~x_2>0.5,\\
      (3,-0.5,~0.5,~5),   &\quad x_1<0.5,~x_2<0.5,\\
      (1.5,-0.5,-0.5,~5), &\quad x_1>0.5,~x_2<0.5.
    \end{cases}
  \end{align*}
  It will describe the interaction of four contact discontinuities (vortex sheets)
  with the same sign (the negative sign).
\end{example}

The monitor function is chosen as \eqref{eq:monitor} with $\alpha=1200$ and $\sigma=\ln\rho$.
Figure \ref{fig:RP1} shows the adaptive mesh, the contours of the density logarithms $\ln\rho$
with $40$ equally spaced lines, and the cut of $\ln\rho$  along $x_2=x_1$ at $t=0.4$.
It is seen that the four initial vortex sheets interact each other to
form a spiral with the low rest-mass density around the center of the domain as time
increases, which is the typical cavitation phenomenon in gas dynamics,
and the adaptive mesh points well concentrate near the large gradient area of $\ln\rho$ as expected, which agrees well with the important features.
 Moreover, the solution on the adaptive moving mesh with $200\times 200$ cells is better than that on the uniform mesh with the same cells,
and is comparable to that on the uniform mesh with $600\times 600$ cells according to the cut lines. Those verify the effectiveness of the present adaptive moving mesh strategy.
The CPU times in Table \ref{tab:2DRP_CPU} clearly highlight the efficiency of the adaptive moving mesh scheme,
since it takes only $13.9\%$ CPU time of the uniform mesh with $600\times 600$ cells.

\begin{table}[!ht]
  \centering
  \begin{tabular}{r|ccc}
  	\hline
  	                       & adaptive ($200\times 200$ cells) & uniform ($200\times 200$ cells) & uniform ($600\times 600$ cells) \\ \hline
  	Example \ref{ex:2DRP1} &         1m08s          &          20s           &         7m47s          \\
  	Example \ref{ex:2DRP2} &         1m44s          &          18s           &         7m01s          \\
  	Example \ref{ex:2DRP3} &         2m48s          &          19s           &         7m16s          \\ \hline
  \end{tabular}
  \caption{CPU times of Examples \ref{ex:2DRP1}-\ref{ex:2DRP3}.}
  \label{tab:2DRP_CPU}
\end{table}

\begin{figure}[ht!]
  \begin{subfigure}[b]{0.32\textwidth}
    \centering
    \includegraphics[width=1.0\textwidth]{./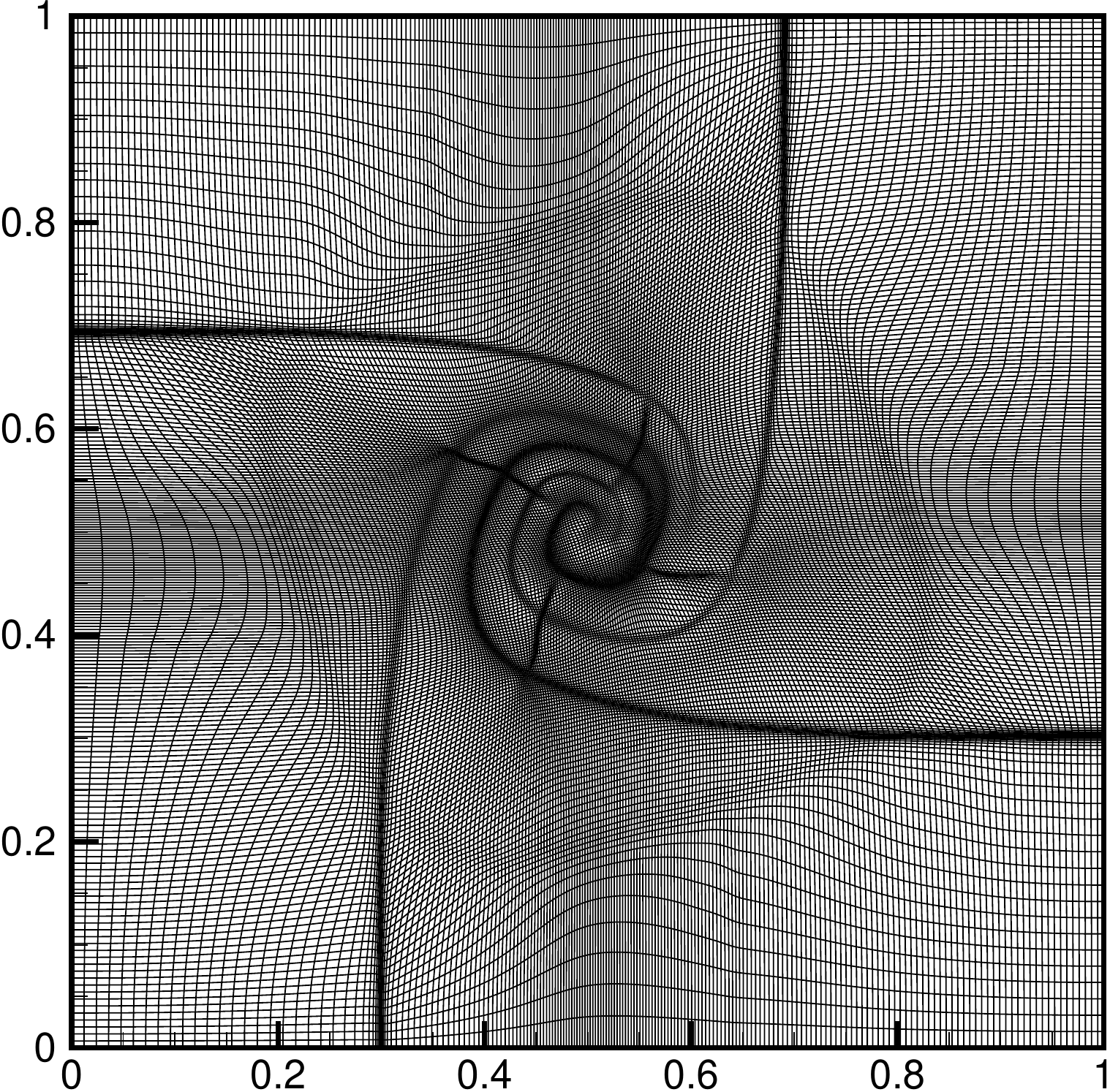}
  \end{subfigure}
  \begin{subfigure}[b]{0.32\textwidth}
    \centering
    \includegraphics[width=1.0\textwidth]{./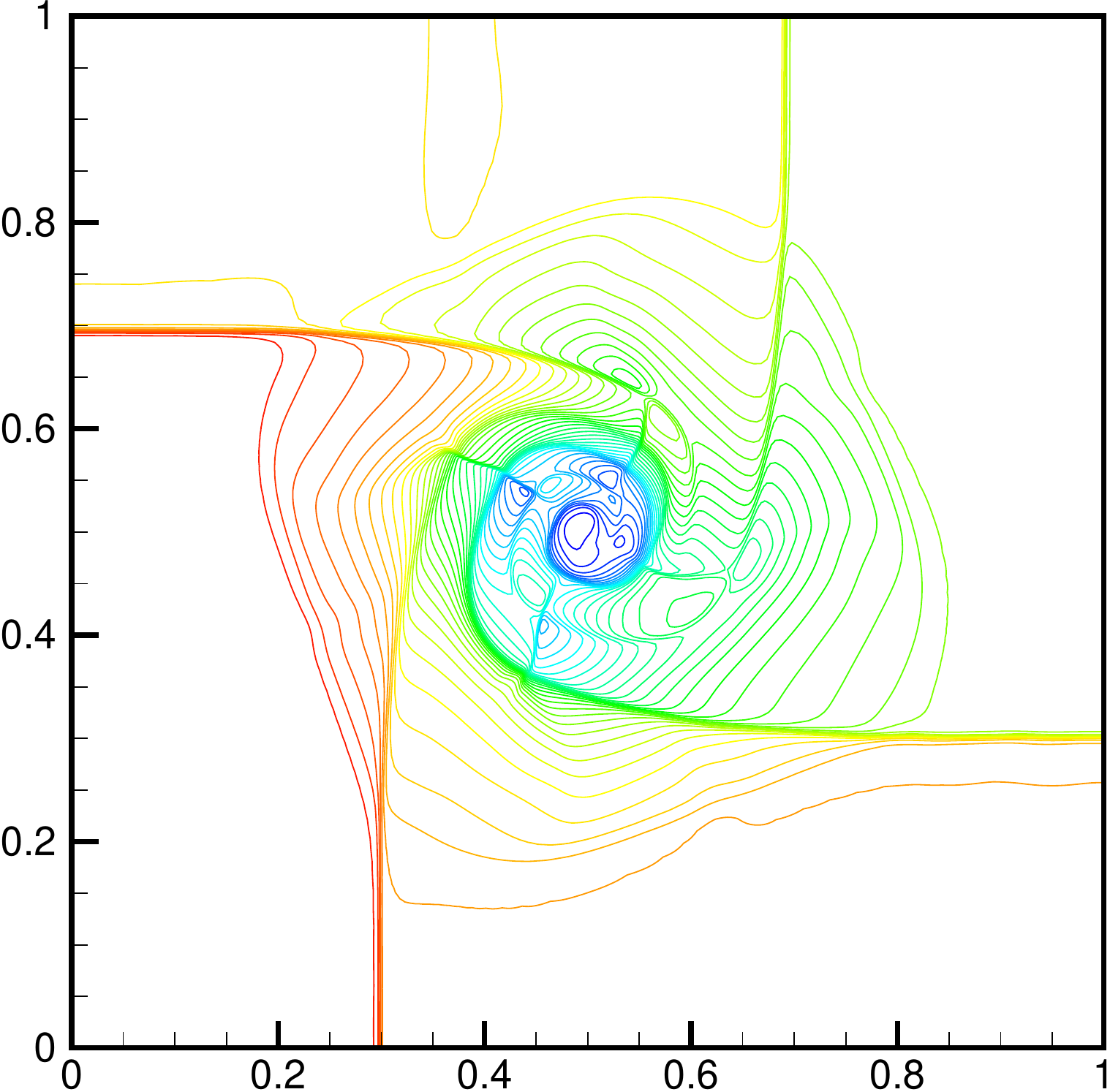}
  \end{subfigure}
\begin{subfigure}[b]{0.34\textwidth}
	\centering
	\includegraphics[width=1.0\textwidth]{./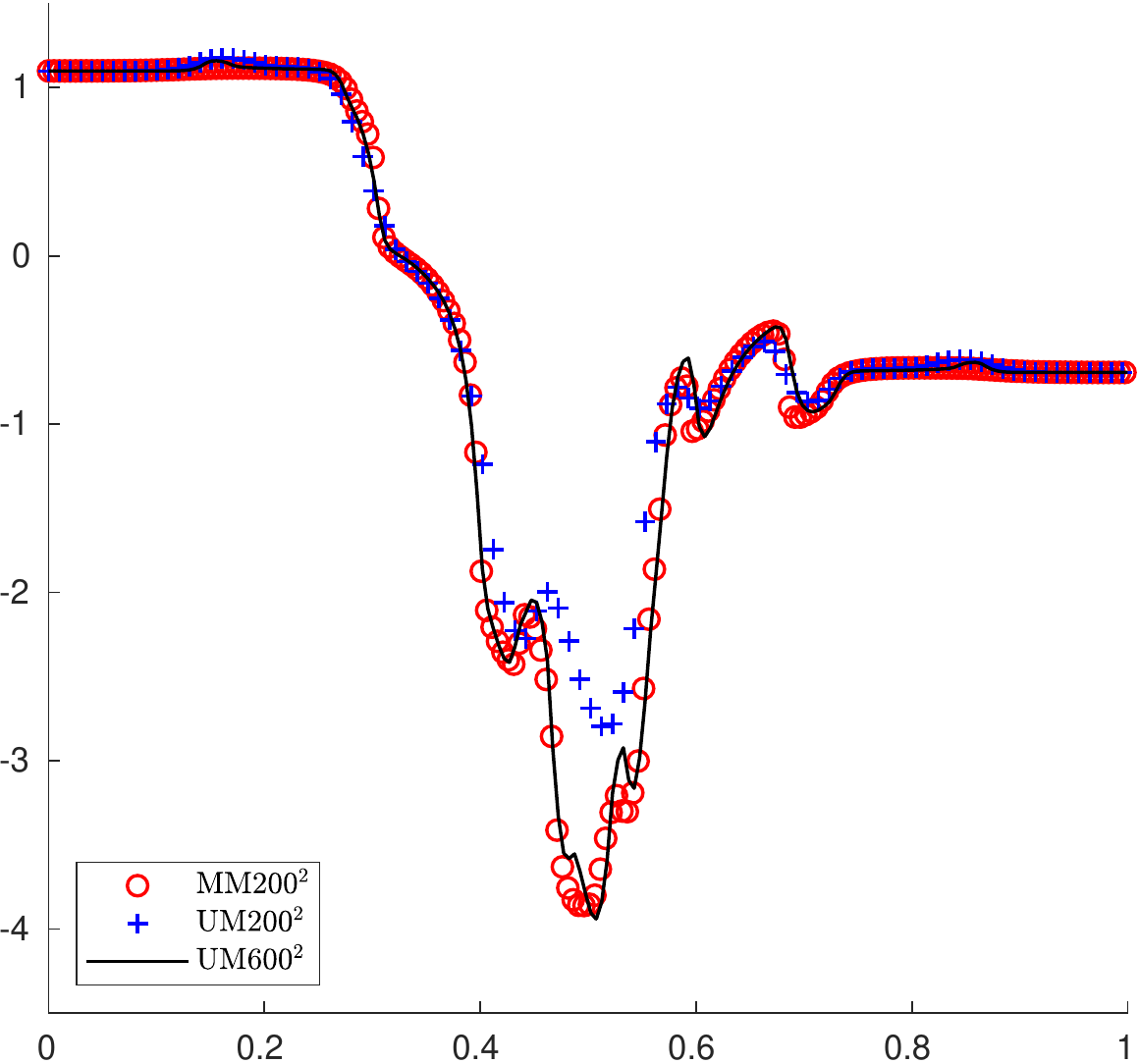}
\end{subfigure}
  \caption{Example \ref{ex:2DRP1}: From left to right: adaptive mesh of $200\times 200$ cells,
   contour of $\ln \rho$ with $40$ equally spaced contour lines, and $\ln \rho$ along $x_2=x_1$ at $t=0.4$.}
  \label{fig:RP1}
\end{figure}

\begin{example}[Riemann problem \uppercase\expandafter{\romannumeral2}]\label{ex:2DRP2}\rm
  The initial data are
  \begin{align*}
    (\rho,v_1,v_2,p)=\begin{cases}
    	(1,~0,~0,~1),             & \quad x_1>0.5,~x_2>0.5, \\
    	(0.5771,-0.3529,~0,~0.4), & \quad x_1<0.5,~x_2>0.5, \\
    	(1,-0.3529,-0.3529,~1),   & \quad x_1<0.5,~x_2<0.5, \\
    	(0.5771,~0,-0.3529,~0.4), & \quad x_1>0.5,~x_2<0.5,
    \end{cases}
  \end{align*}
  which is about the interaction of four rarefaction waves.
\end{example}

The monitor function is the same as that in the last example.
Figure \ref{fig:RP2} shows the adaptive mesh, the contours of the density logarithms $\ln\rho$
with $40$ equally spaced lines, and $\ln\rho$  along $x_2=x_1$ at $t=0.4$.
The CPU times are listed in Table \ref{tab:2DRP_CPU}. 
The results show that those four initial discontinuities first evolve as four rarefaction waves
and then interact each other and form two (almost parallel) curved shock waves perpendicular to the line $x_2=x_1$ as time increases.
It is seen  that the adaptive mesh method effectively captures the important features
such as rarefaction waves and shock waves, and is well comparable to the fixed mesh method with a finer mesh.

\begin{figure}[ht!]
  \begin{subfigure}[b]{0.32\textwidth}
    \centering
    \includegraphics[width=1.0\textwidth]{./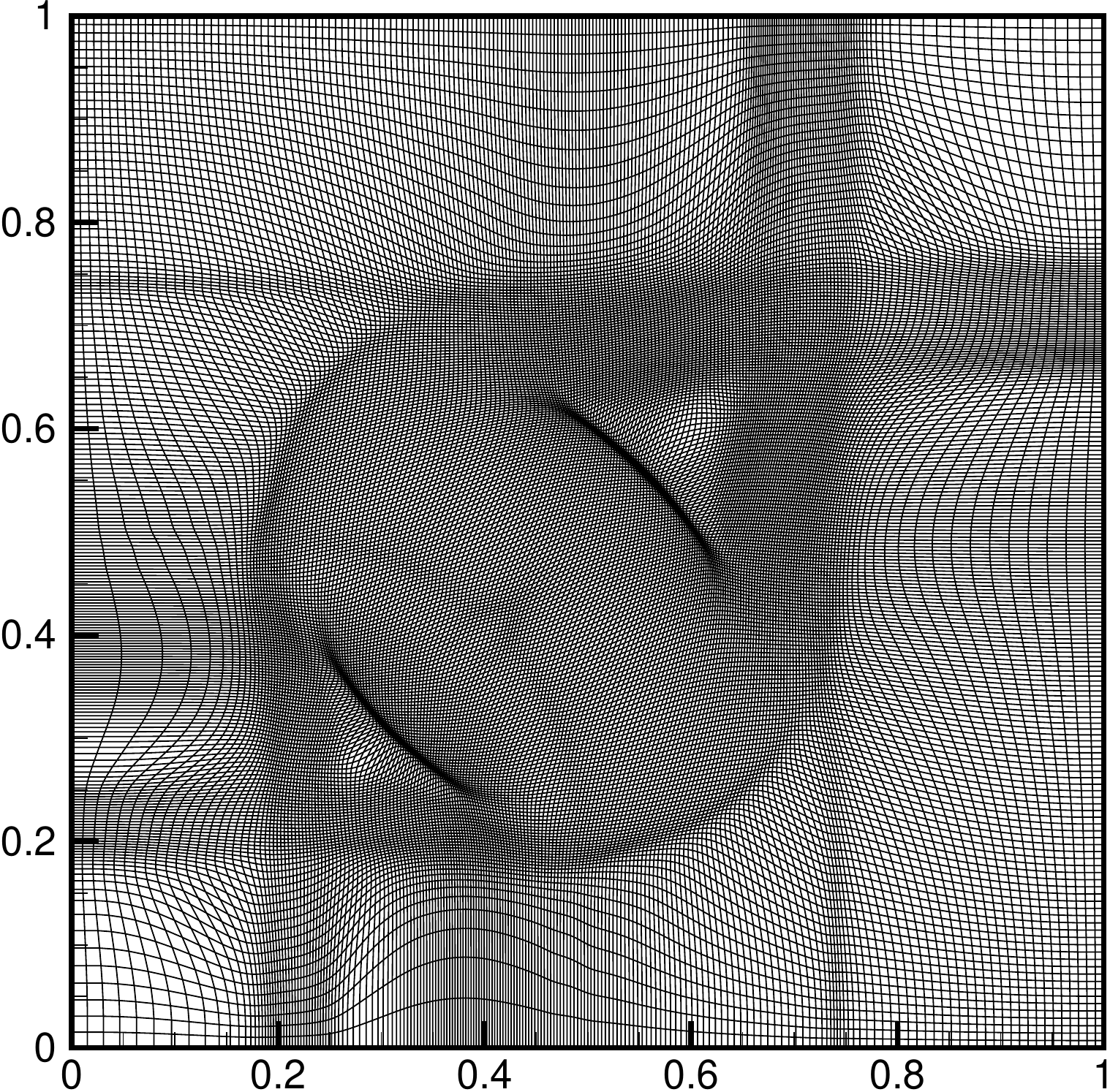}
  \end{subfigure}
  \begin{subfigure}[b]{0.32\textwidth}
    \centering
    \includegraphics[width=1.0\textwidth]{./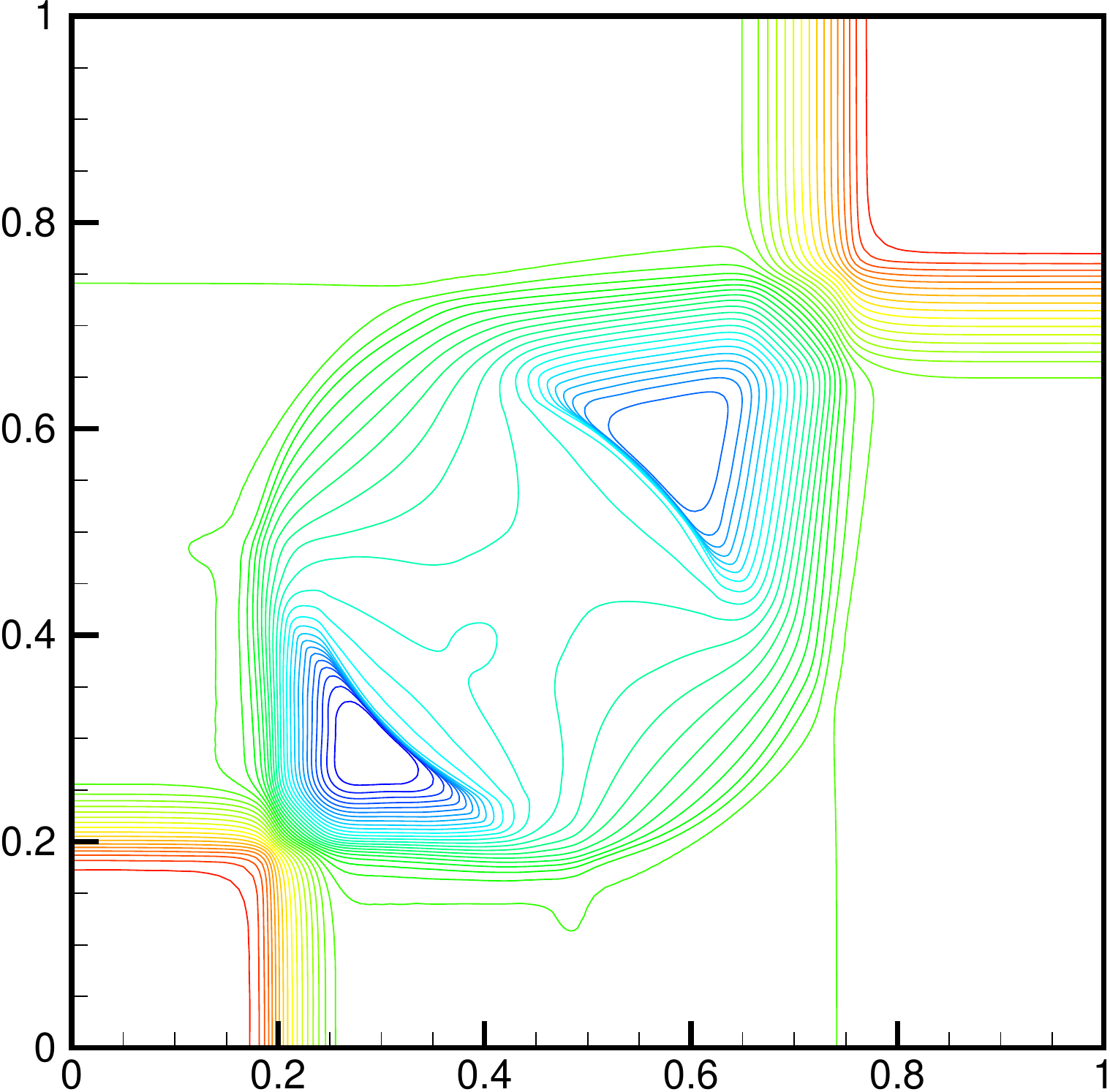}
  \end{subfigure}
\begin{subfigure}[b]{0.36\textwidth}
	\centering
	\includegraphics[width=1.0\textwidth]{./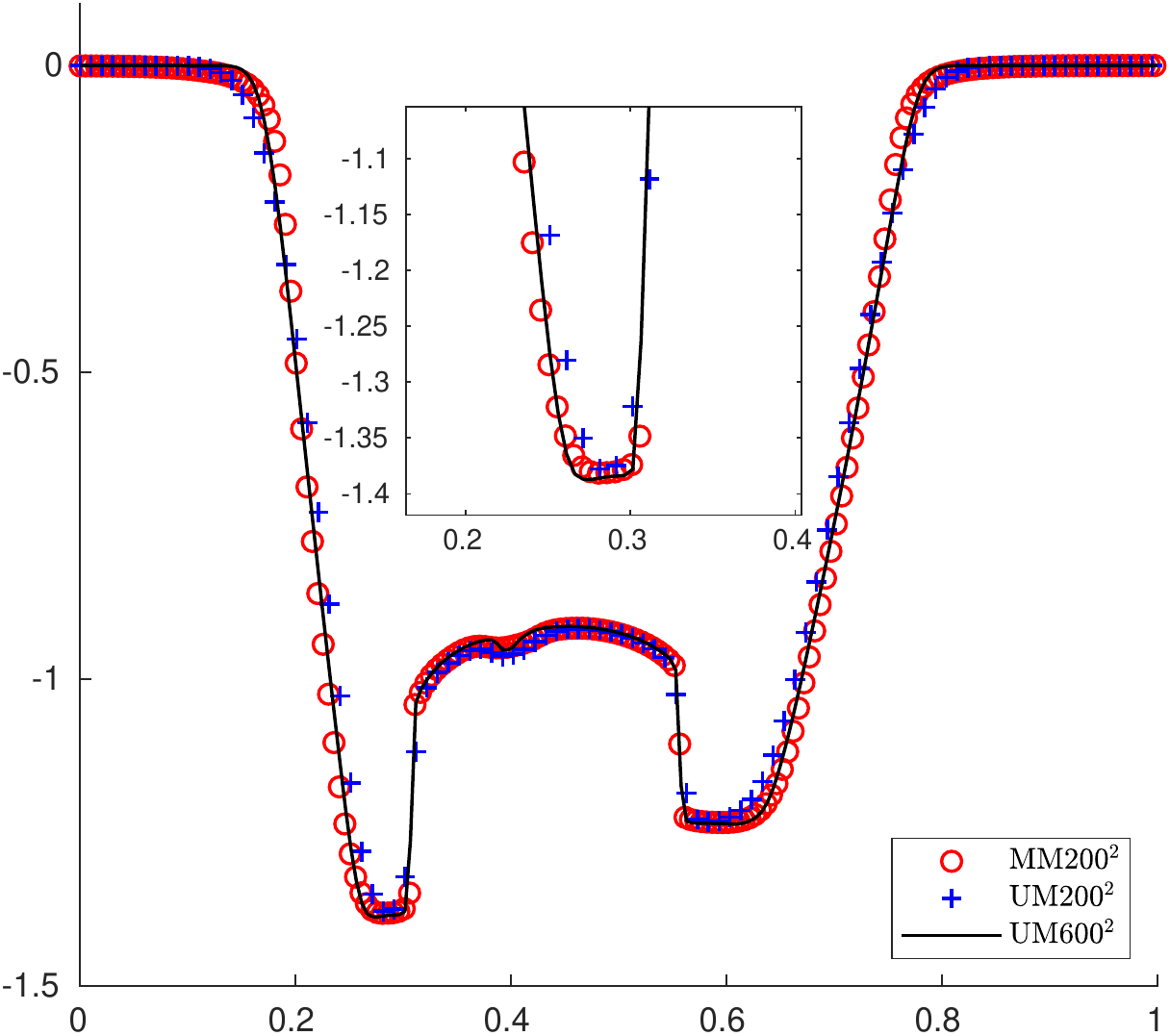}
\end{subfigure}
  \caption{Example \ref{ex:2DRP2}:
  From left to right: adaptive mesh of $200\times 200$ cells,
   contour of $\ln \rho$ with $40$ equally spaced contour lines, and $\ln \rho$ along $x_2=x_1$ at $t=0.4$.
  }
  \label{fig:RP2}
\end{figure}

\begin{example}[Riemann problem \uppercase\expandafter{\romannumeral3}]\label{ex:2DRP3}\rm
  The initial data are
  \begin{align*}
    (\rho,v_1,v_2,p)=\begin{cases}
    	(0.035145216124503,~0,~0,~0.162931056509027), & \quad x_1>0.5,~x_2>0.5, \\
    	(0.1,~0.7,~0,~1),                             & \quad x_1<0.5,~x_2>0.5, \\
    	(0.5,~0,~0,~1),                               & \quad x_1<0.5,~x_2<0.5, \\
    	(0.1,~0,~0.7,~1),                             & \quad x_1>0.5,~x_2<0.5,
    \end{cases}
  \end{align*}
  where the left and bottom discontinuities are two contact discontinuities and
  the top and right are two shock waves.
\end{example}

The monitor function is the same as above.
The adaptive mesh, the contours of the density logarithms $\ln\rho$
with $40$ equally spaced lines, and $\ln\rho$ cut along $x_2=x_1$ at $t=0.4$
are present in Figure \ref{fig:RP3}.
The initial discontinuities interact each other and form a ``mushroom cloud'' around the point $(0.5, 0.5)$,
which is well captured by the adaptive moving mesh method with the chosen monitor function.
Similar to the last two examples, the solution obtained by the adaptive moving mesh with $200\times 200$ cells
is much better than that on the same uniform cells, and agrees well with that
with $600\times 600$ uniform cells, while the adaptive moving mesh scheme only takes $34.6\%$ CPU time, see Table \ref{tab:2DRP_CPU}, showing the high efficiency of the
adaptive scheme.

\begin{figure}[ht!]
  \begin{subfigure}[b]{0.32\textwidth}
    \centering
    \includegraphics[width=1.0\textwidth]{./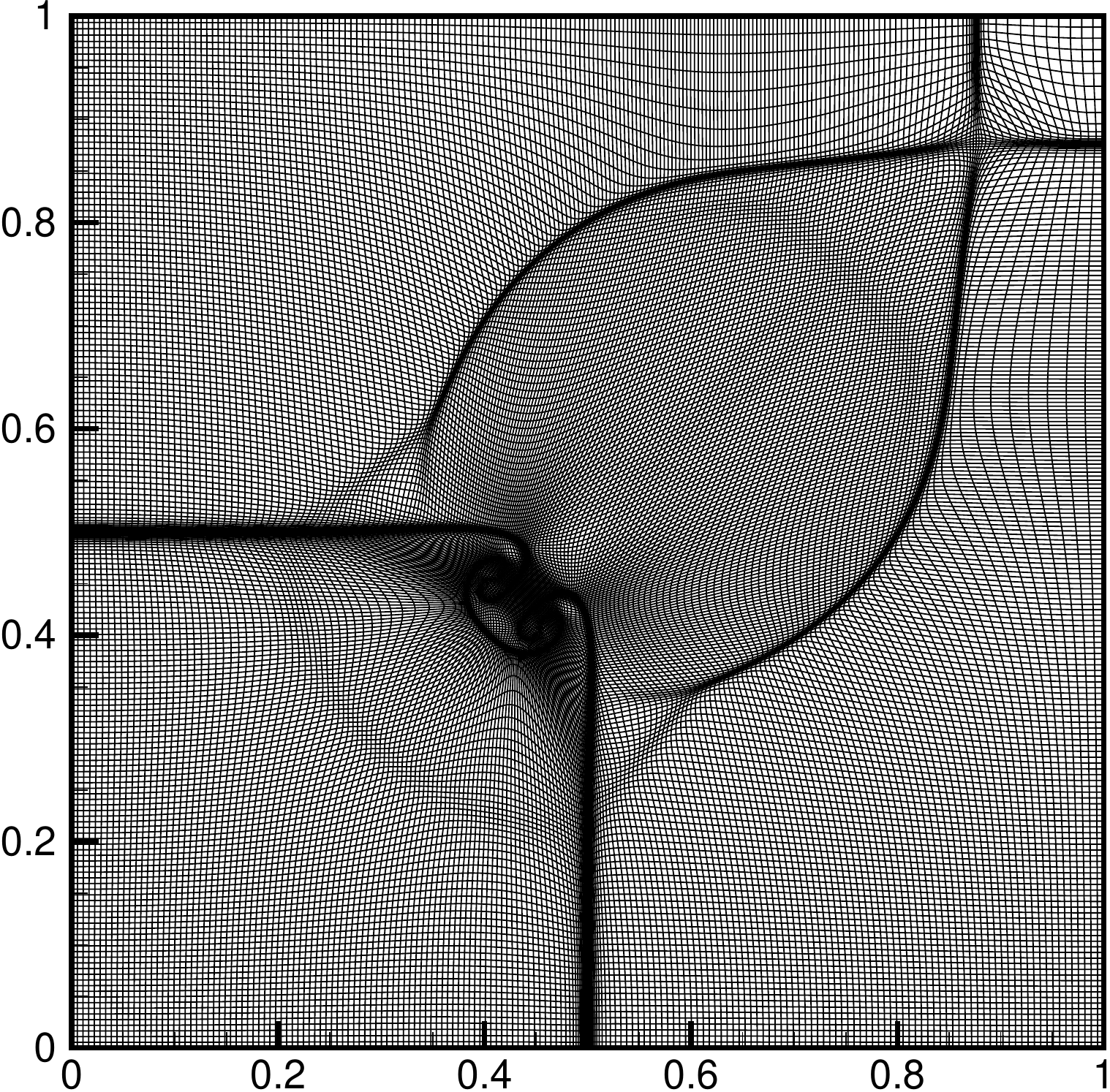}
  \end{subfigure}
  \begin{subfigure}[b]{0.32\textwidth}
    \centering
    \includegraphics[width=1.0\textwidth]{./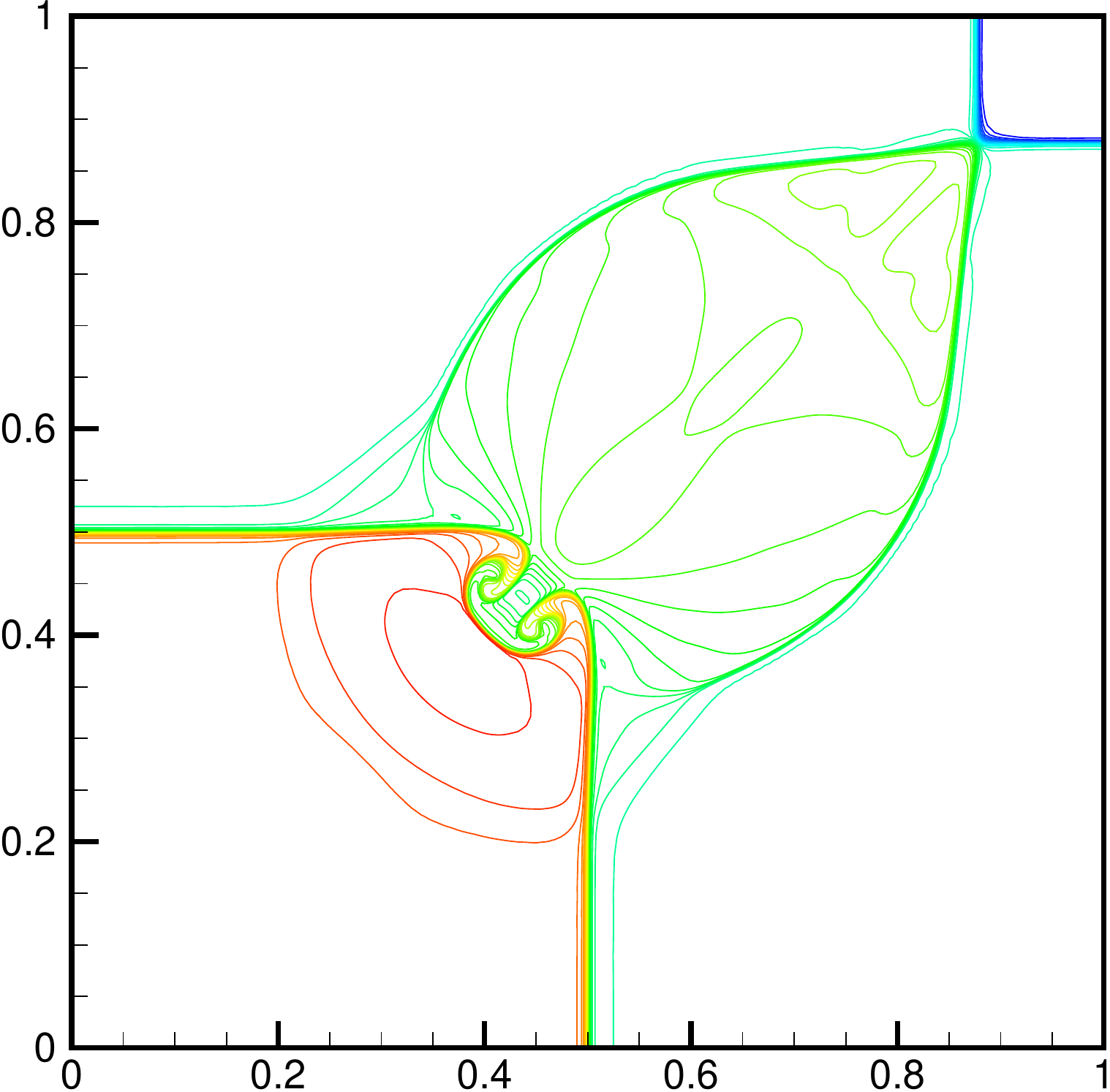}
  \end{subfigure}
\begin{subfigure}[b]{0.36\textwidth}
  \centering
  \includegraphics[width=1.0\textwidth]{./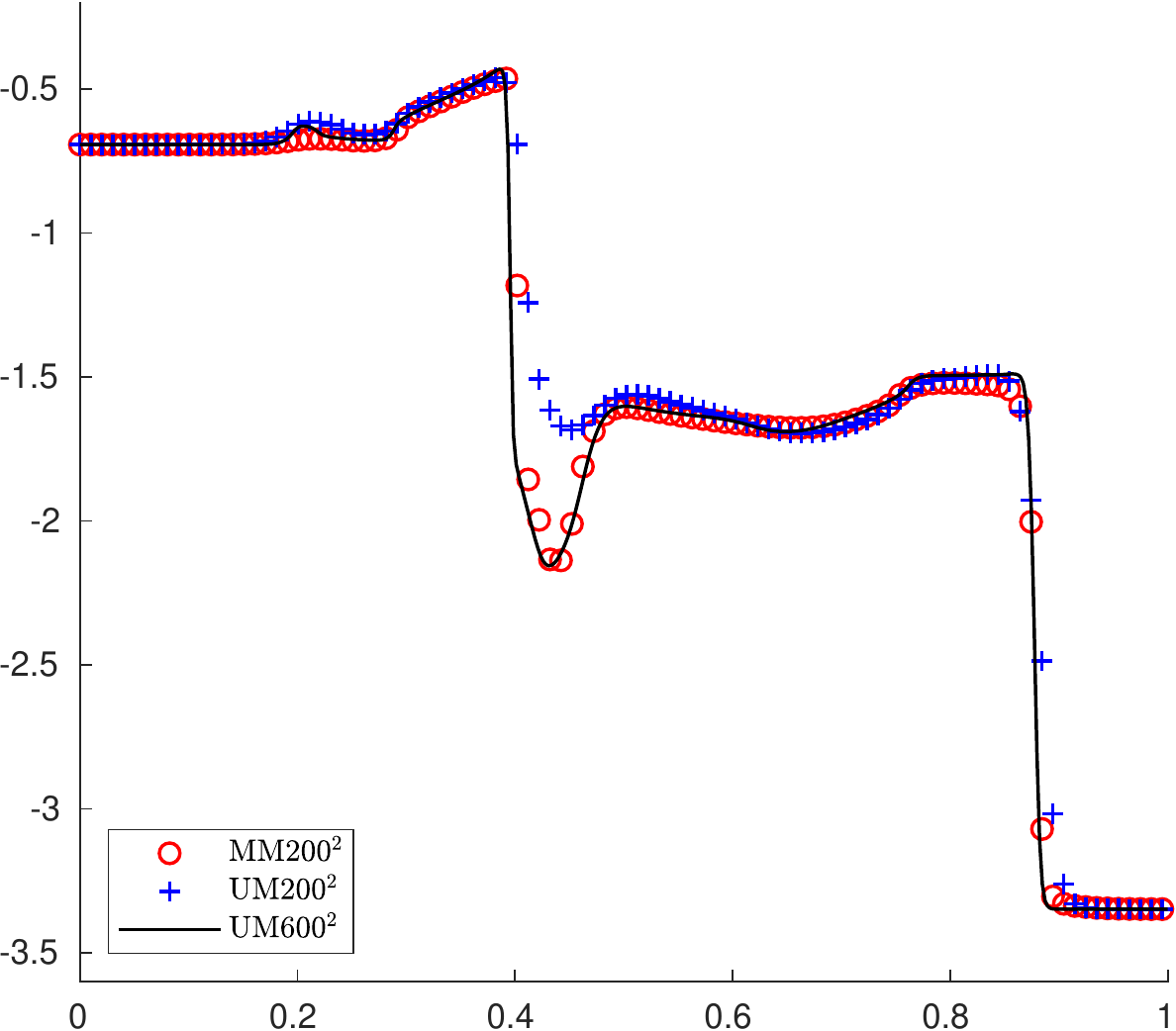}
\end{subfigure}
  \caption{Example \ref{ex:2DRP3}:
   From left to right: adaptive mesh of $200\times 200$ cells,
   contour of $\ln \rho$ with $40$ equally spaced contour lines, and $\ln \rho$ along $x_2=x_1$ at $t=0.4$.
   }
  \label{fig:RP3}
\end{figure}

\subsection{3D results}
\begin{example}[3D smooth sine wave]\label{ex:acc3D}\rm
	This test is used to verify the accuracy of the 3D ES moving mesh scheme.
	The physical domain is a unit cube with periodic boundary conditions,
	and partitioned into $N\times N\times N$ cells.
	The exact solutions are given by
	\begin{align*}
	&(\rho,\vx,\vy,\vz,p)(x_1,x_2,x_3,t)=(1+0.2\sin[2\pi(x_1+x_2+x_3-(\vx+\vy+\vz) t)], ~0.2, ~0.4, ~0.6, ~1).
	\end{align*}
\end{example}

The monitor function and the number of the Jacobi iteration are the same as those in the 2D accuracy test.
Table \ref{tab:acc3D} lists the errors and the orders of convergence in $\rho$ at $t=0.1$.
Figure \ref{fig:acc3D_mesh} displays the adaptive mesh at the final time.
The conclusions are similar to the 2D case.


\begin{table}[!ht]
	\centering
	\begin{tabular}{r|cc|cc|cc}
		\hline
		$N$ & $\ell^1$ error & order & $\ell^2$ error & order & $\ell^\infty$ error &  order \\ \hline
		 20 &   2.085e-02    &   -   &   2.551e-02    &   -   &      4.699e-02      &  -   \\
		 40 &   1.173e-02    & 0.83  &   1.446e-02    & 0.82  &      2.638e-02      & 0.83 \\
		 80 &   4.166e-03    & 1.49  &   6.266e-03    & 1.21  &      1.455e-02      & 0.86 \\
		160 &   1.287e-03    & 1.69  &   2.239e-03    & 1.48  &      6.524e-03      & 1.16 \\
		320 &   3.319e-04    & 1.96  &   5.992e-04    & 1.90  &      2.025e-03      & 1.69 \\ \hline
	\end{tabular}
	\caption{Example \ref{ex:acc3D}: Errors and orders of convergence in $\rho$ at $t=0.1$.}
	\label{tab:acc3D}
\end{table}

\begin{figure}[!ht]
	\centering
	\begin{subfigure}[b]{0.4\textwidth}
		\centering
		\includegraphics[width=1.0\textwidth]{./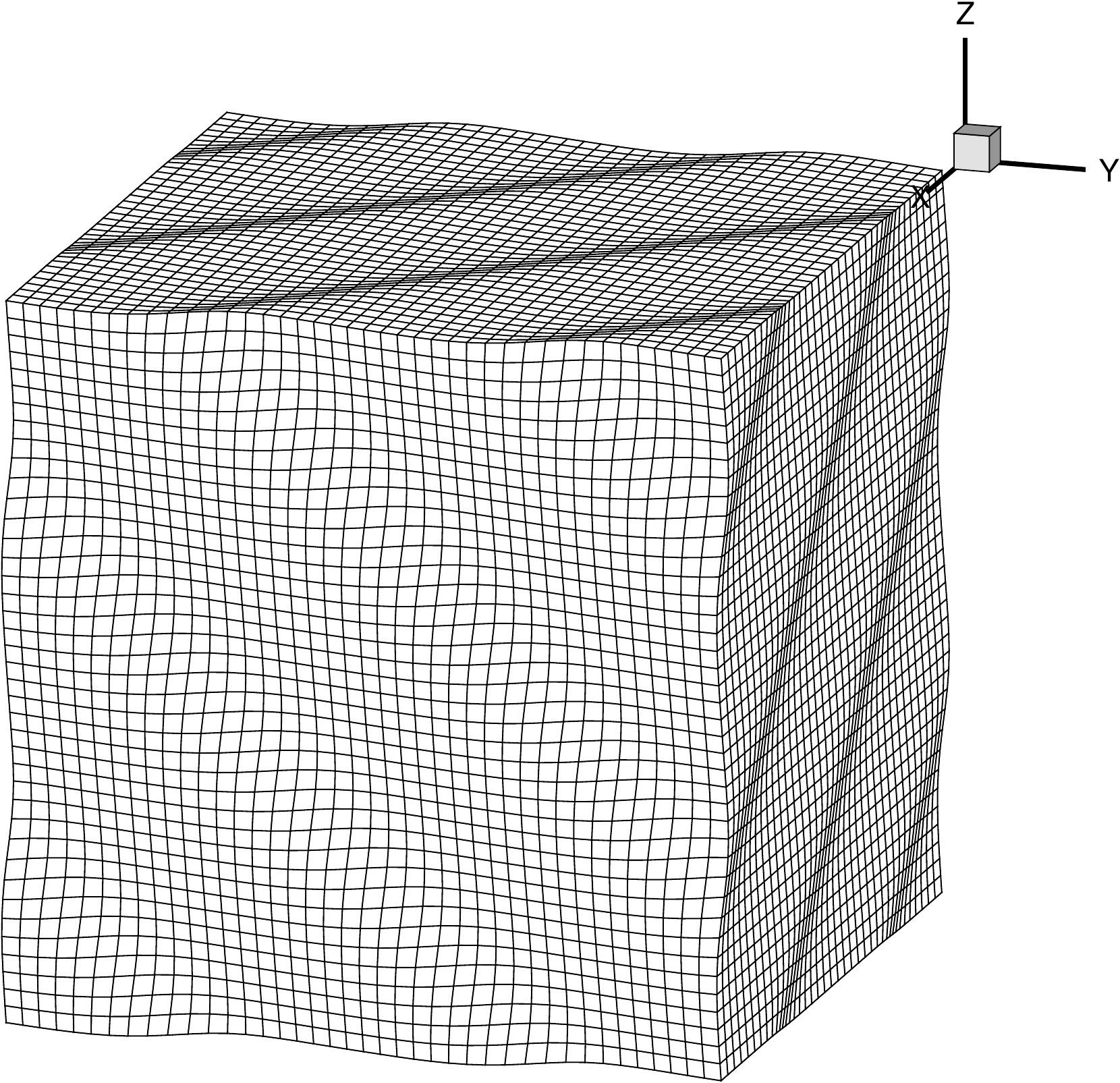}
	\end{subfigure}
	\caption{Example \ref{ex:acc3D}:  Adaptive mesh of $40\times 40\times 40$ cells at $t=0.1$.}
	\label{fig:acc3D_mesh}
\end{figure}

\begin{example}[Spherical symmetric Riemann problem]\label{ex:SyRP}\rm
  To examine the performance of the 3D scheme, we first consider this Riemann problem
  with a reference solution obtained by using a second-order TVD scheme to solve
  the RHD equations in 1D spherical coordinates.
	The initial data are
	\begin{equation*}
	  (\rho,\vx,\vy,\vz,p)=\begin{cases}
	  	(10,~0,~0,~0,40/3),     & ~ r = {\sqrt{x_1^2+x_2^2+x_3^2}} < 0.5, \\
	  	(1, ~0,~0,~0, 10^{-6}), & ~ \text{otherwise}.
	  \end{cases}
	\end{equation*}
\end{example}

The monitor function is chosen as \eqref{eq:monitor} with $\alpha=1000$ and $\sigma=\ln\rho$.
Figures \ref{fig:SyRP_mesh} and \ref{fig:SyRP_cmp} give the adaptive mesh
 and the comparison of the density $\rho$ and the magnitude of velocity $\abs{\bv}$
along the line connecting $(0,0,0)$ and $(1,1,1)$ at $t=0.4$ obtained by
using  $100\times 100\times 100$ or $200\times 200\times 200$ cells, respectively.
It is obvious that all the schemes give correct solutions, while the adaptive moving mesh scheme
gives better results than the uniform mesh even with double mesh cells in each direction,
since the adaptive mesh concentrates near where large gradient in $\ln\rho$ occurs,
increasing the resolution near the discontinuities.
From Table \ref{tab:3D_CPU}, the CPU time of the ES moving mesh scheme is $18.0\%$ of the refined uniform mesh,
showing the high efficiency of the adaptive moving mesh scheme.

 The performances of   {\tt VCL1} 
 and {\tt VCL2} 
 are compared
  in Figure \ref{fig:VCL_cmp}. The left figure shows the evolution of the logarithm of the difference (in the $\ell^1$-norm) between
  the Jacobian $\{J_{\bm{i}}^n\}$ updated by  {\tt VCL1} or {\tt VCL2} and $\{\tilde{J}^n_{\bm{i}}\}$ obtained by the direct discretization of the first equation in \eqref{eq:CMM_VCL}.
  It can be seen that using {\tt VCL1} with the SSP second- and third-order RK methods (abbreviated respectively as {\tt RK2} and {\tt RK3}) gives almost the same error,
  which is larger (about two order of magnitude) than {\tt VCL2} with {\tt RK2}.
  The error obtained by {\tt VCL2} with {\tt RK3} is nearly $10^{-12}$, verifying the analysis in Section \ref{subsec:GCLs}.
  The right figure presents a comparison of the rest-mass density $\rho$, where no obvious difference is observed.
  The CPU time of the adaptive ES scheme with {\tt VCL2} is about $4\%$ (resp.$6\%$) larger than that of the adaptive ES scheme with {\tt VCL1} when {\tt RK2} (resp. {\tt RK3}) is used.
  In view of those, {\tt VCL1} is used in all other examples.

\begin{figure}[!ht]
	\centering
	\includegraphics[width=0.4\textwidth]{./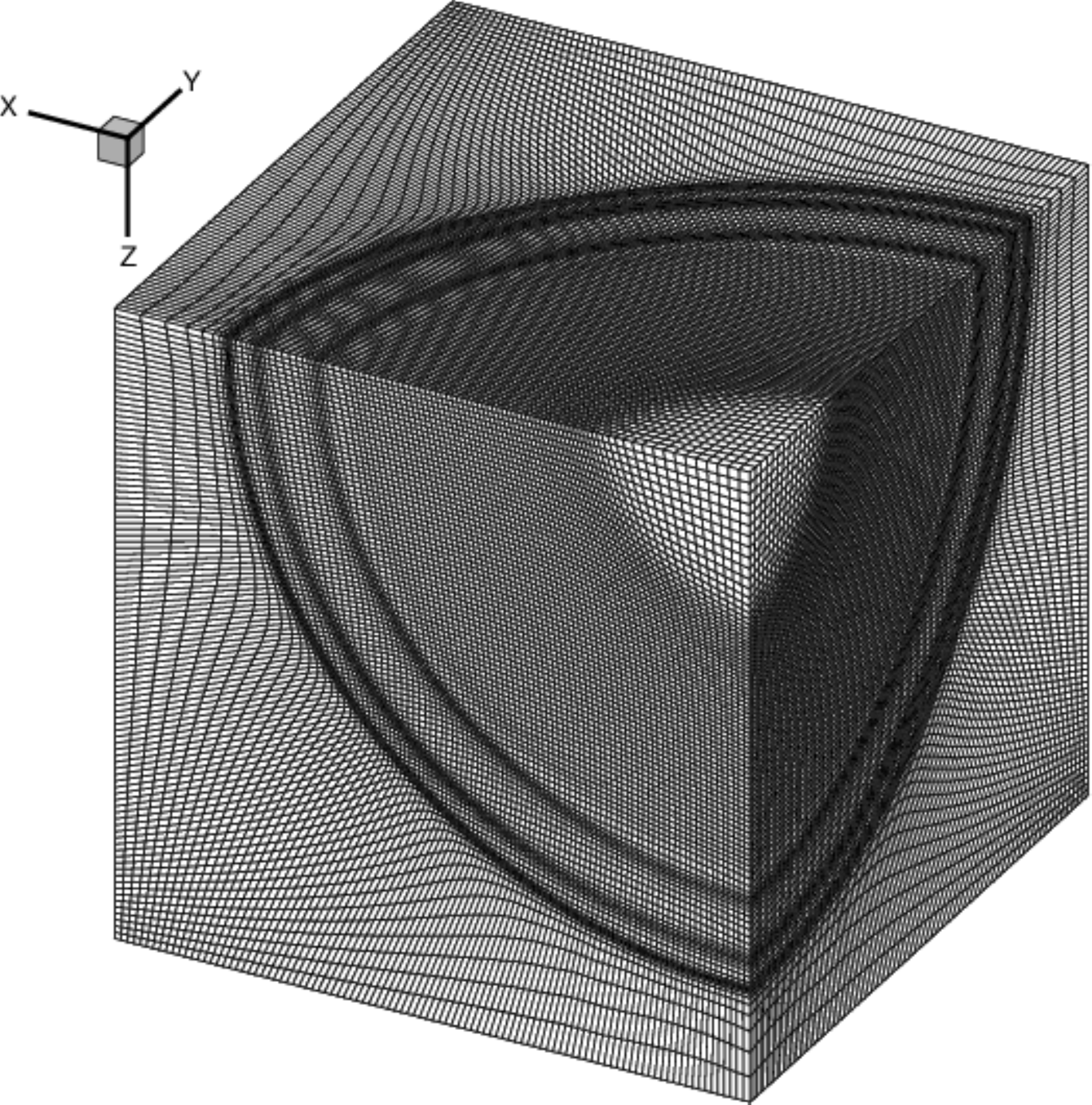}
	\caption{Example \ref{ex:SyRP}:  Adaptive mesh at $t=0.4$. }
	\label{fig:SyRP_mesh}
\end{figure}

\begin{figure}[!ht]
	\centering
	\begin{subfigure}[b]{0.48\textwidth}
		\centering
		\includegraphics[width=1.0\textwidth]{./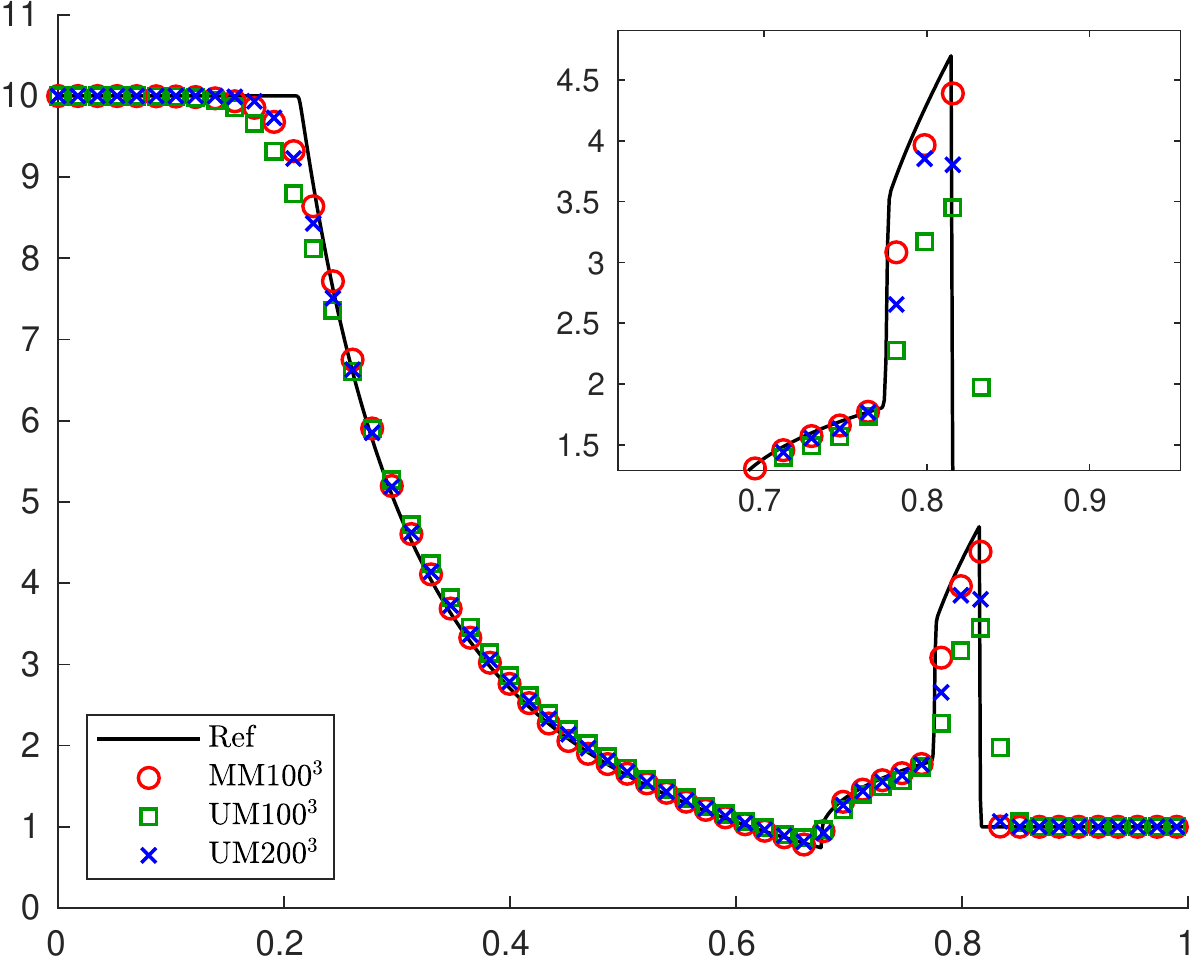}
		\caption{$\rho$}
	\end{subfigure}
	\begin{subfigure}[b]{0.48\textwidth}
		\centering
		\includegraphics[width=1.0\textwidth]{./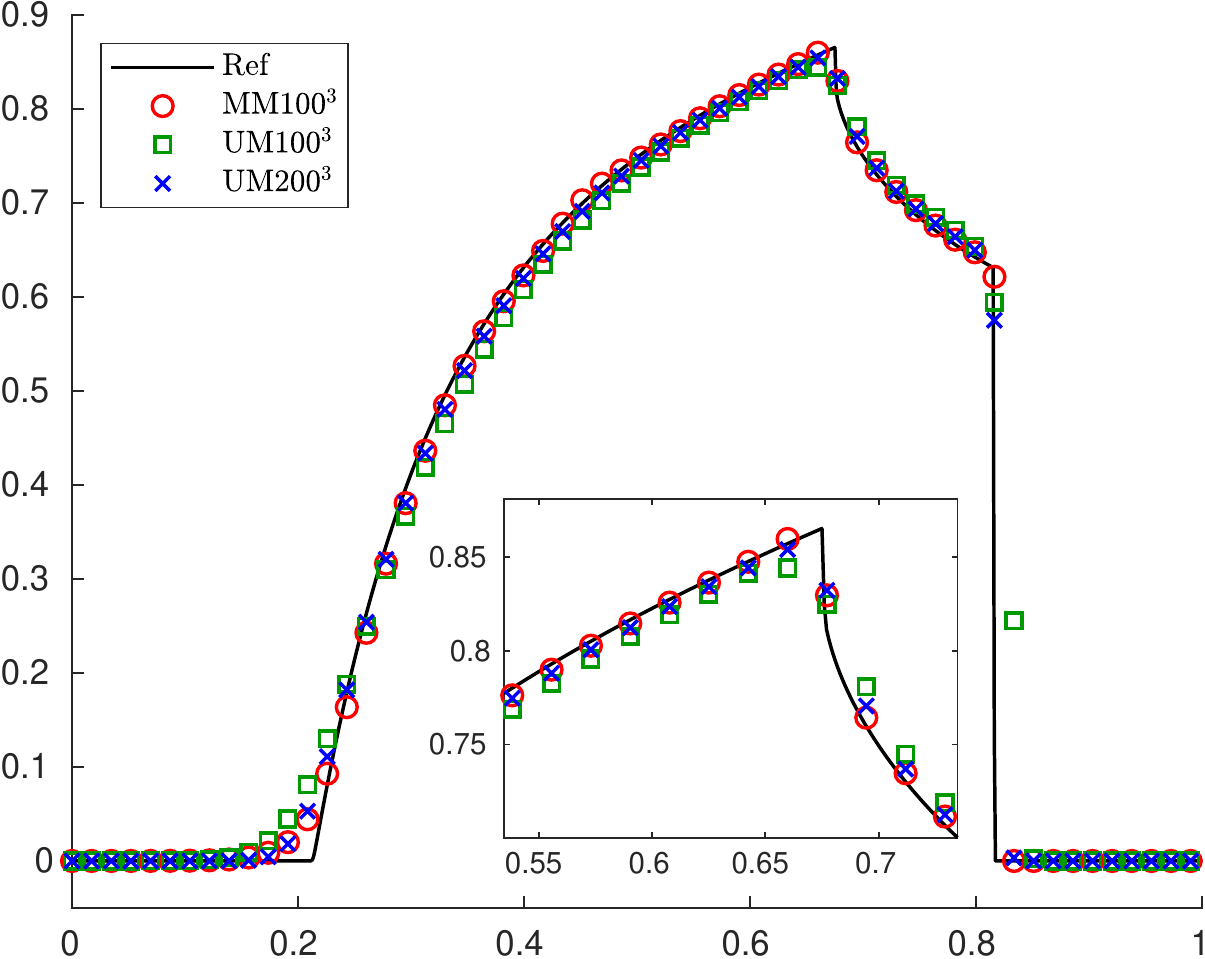}
		\caption{$\abs{\bv}$}
	\end{subfigure}
	\caption{Example \ref{ex:SyRP}: $\rho$ and $\abs{\bv}$
    along the line connecting $(0,0,0)$ and $(1,1,1)$ at $t=0.4$.}
	\label{fig:SyRP_cmp}
\end{figure}

\begin{figure}[!ht]
  \centering
  \begin{subfigure}[b]{0.48\textwidth}
    \centering
    \includegraphics[width=1.0\textwidth]{./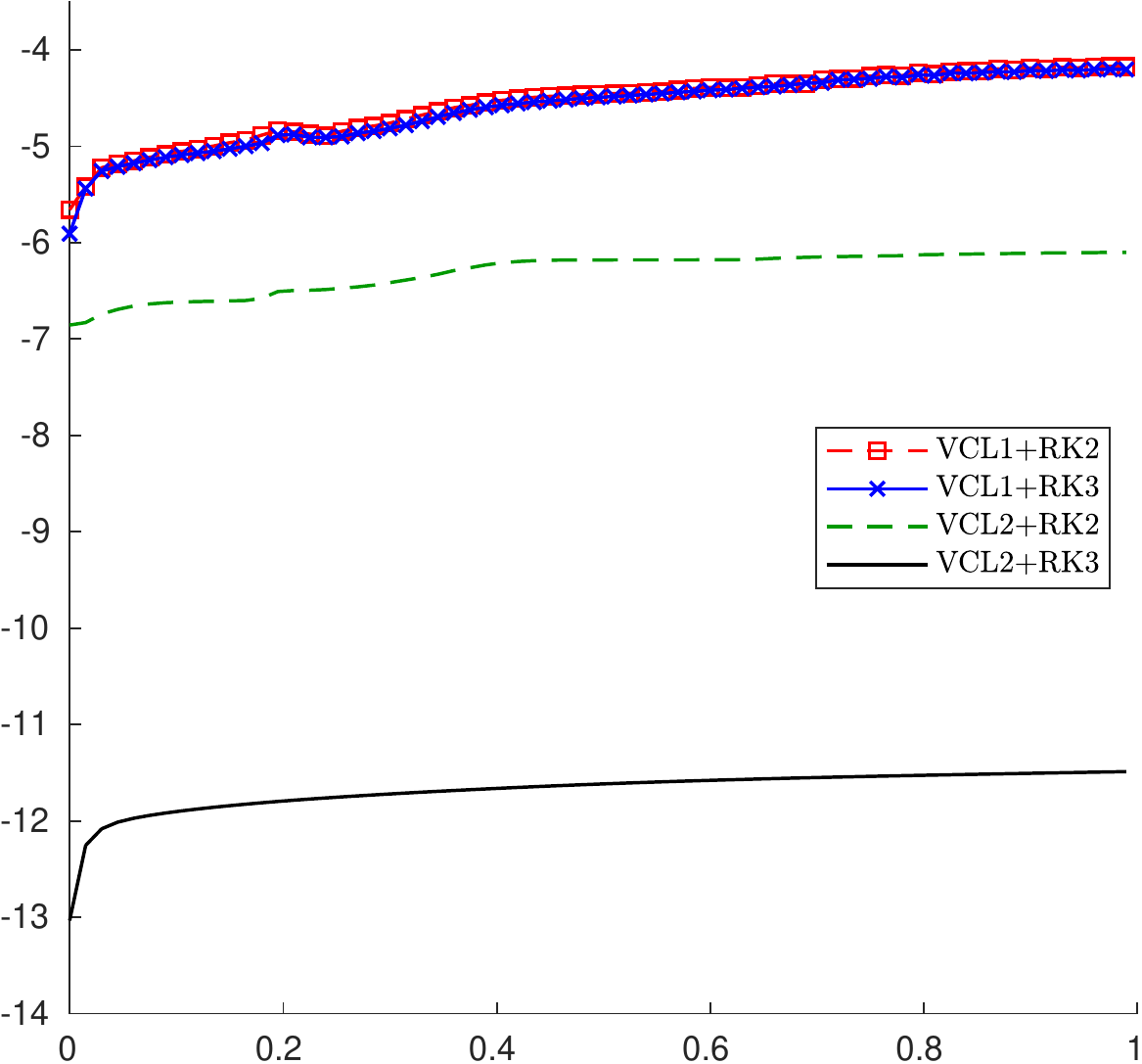}
    \caption{$\log_{10}(|| J^n-\tilde{J}^n||_{\ell^1})$}
  \end{subfigure}
  \begin{subfigure}[b]{0.48\textwidth}
    \centering
    \includegraphics[width=1.0\textwidth]{./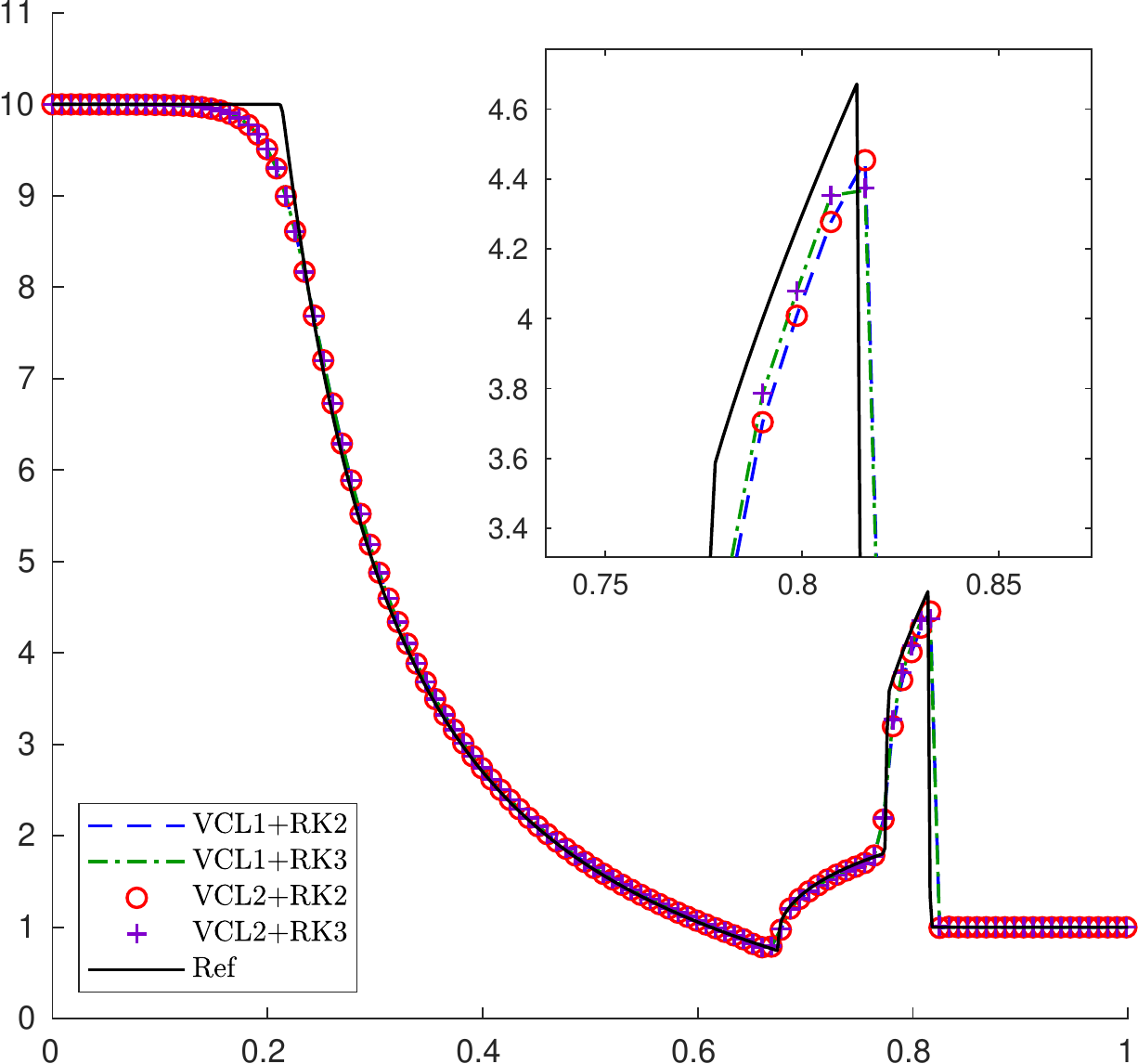}
    \caption{$\rho$}
  \end{subfigure}
  \caption{ A comparison of   {\tt VCL1} and {\tt VCL2} with
  {\tt RK2} or {\tt RK3}. 
  The Jacobian $\{J_{\bm{i}}^n\}$ is updated by  {\tt VCL1} or {\tt VCL2}, while $\{\tilde{J}^n_{\bm{i}}\}$ is obtained by the direct discretization of the first equation in \eqref{eq:CMM_VCL}. }
  \label{fig:VCL_cmp}
\end{figure}

\begin{table}[!ht]
  \centering
  \begin{tabular}{r|ccc}
  	\hline
  	                      & adaptive mesh (cells) & uniform mesh (cells)
  & fine uniform (cells)\\ \hline
  	Example \ref{ex:SyRP} &         2m48s ($100\times 100\times 100$)         &         1m05s ($100\times 100\times 100$)         &         15m32s    ($200\times 200\times 200$)     \\
  	 Example \ref{ex:SBL} &        1h24m20s  ($325\times 90\times 90$)      &         26m07s   ($325\times 90\times 90$)      &        6h27m31s  ($650\times 180\times 180$)      \\ \hline
  \end{tabular}
  \caption{CPU times of Examples \ref{ex:SyRP}-\ref{ex:SBL} (32 cores are used).}
  \label{tab:3D_CPU}
\end{table}

\begin{example}[Shock-bubble interaction problem]\label{ex:SBL}\rm
  This example considers a moving planar shock interacts with a light bubble within the domain
  $[0,325]\times[-45,45]\times[-45,45]$. The detailed setup can be found in \cite{He2012RHD}.
  The initial pre- and post-shock states are
  \begin{equation*}
  (\rho,\vx,\vy,\vz,p)=\begin{cases}
  	(1,~0,~0,~0,~0.05),                                 & x_1<265, \\
  	(1.865225080631180,-0.196781107378299,~0,~0,~0.15), & x_1>265,
  \end{cases}
  \end{equation*}
  and the state of the bubble is $$(\rho,\vx,\vy,\vz,p)=(0.1358,~0,~0,~0,~0.05),\quad
  \sqrt{(x_1-215)^2+x_2^2+x_3^2}\leqslant 25.$$
\end{example}
The monitor is the same as that in the last example.
Figure \ref{fig:SBL_mesh} shows close-up of the adaptive mesh and the 6 iso-surfaces of $\rho$ equally spaced from 0.55 to 1.75, and  two  surface meshes near the bubble at $t=450$.
It is seen that the adaptive mesh points well concentrate near the planar shock and the bubble according to the monitor function.
Figure \ref{fig:SBL_rho} presents the schlieren images on the slice $x_2=0$ of the
rest-mass density $\rho$ at $t=90,180,270,360,450$ (from top to bottom) with
$325\times90\times90$ moving mesh, $325\times90\times90$ uniform mesh and $650\times180\times180$ moving mesh
(from left ro tight), respectively.
Those plots clearly show the dynamics of the interaction between the
shock wave and the bubble, and the sharp interfaces of the bubble at different output times are well captured by
the moving mesh scheme. The ES adaptive moving mesh scheme only takes $21.7\%$ CPU time of the refined uniform mesh
from Table \ref{tab:3D_CPU}, and  gives better results, highlighting its high efficiency.

\begin{figure}[!ht]
	\centering
	  \includegraphics[width=0.45\textwidth]{./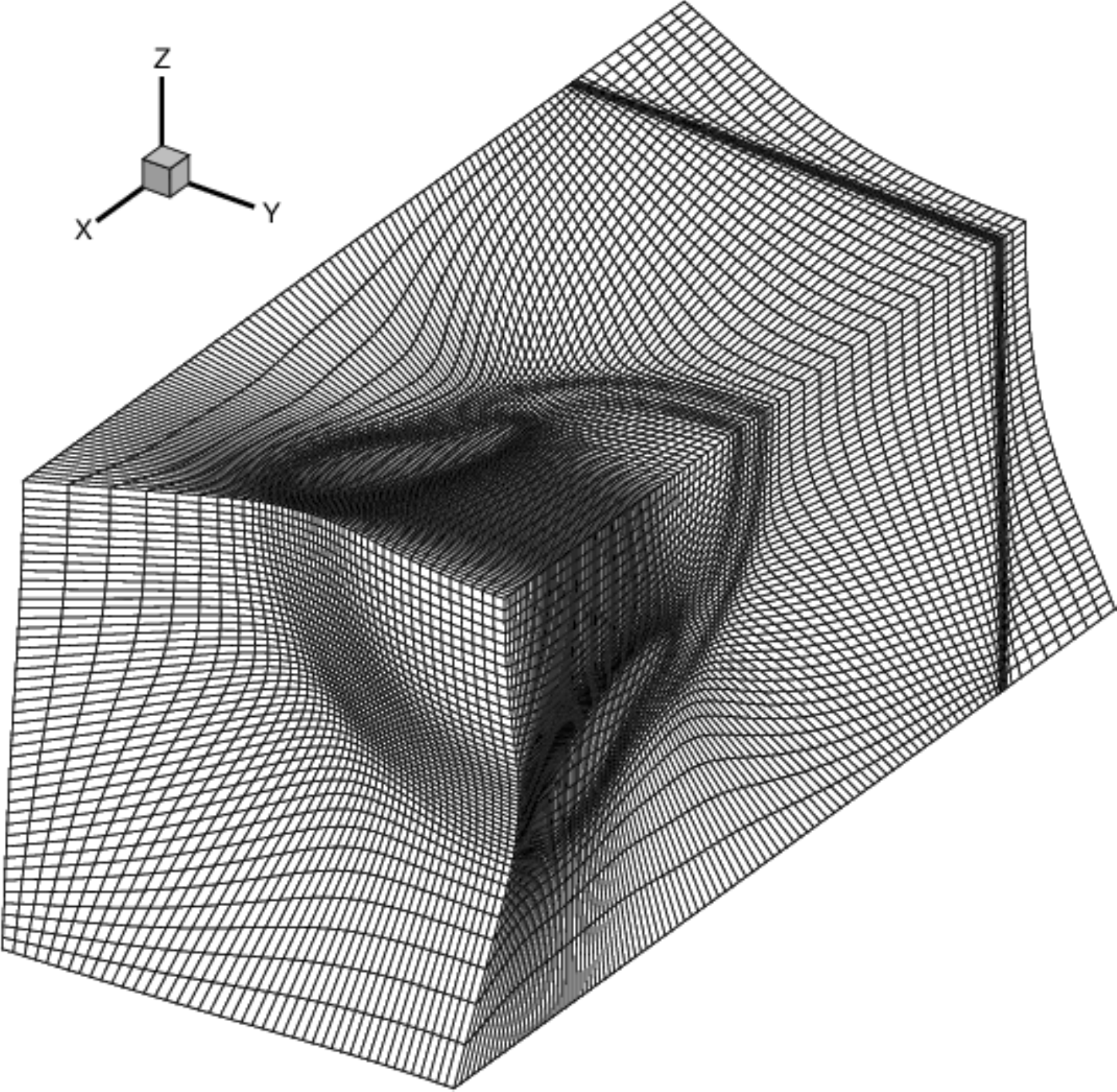}
	  \includegraphics[width=0.45\textwidth]{./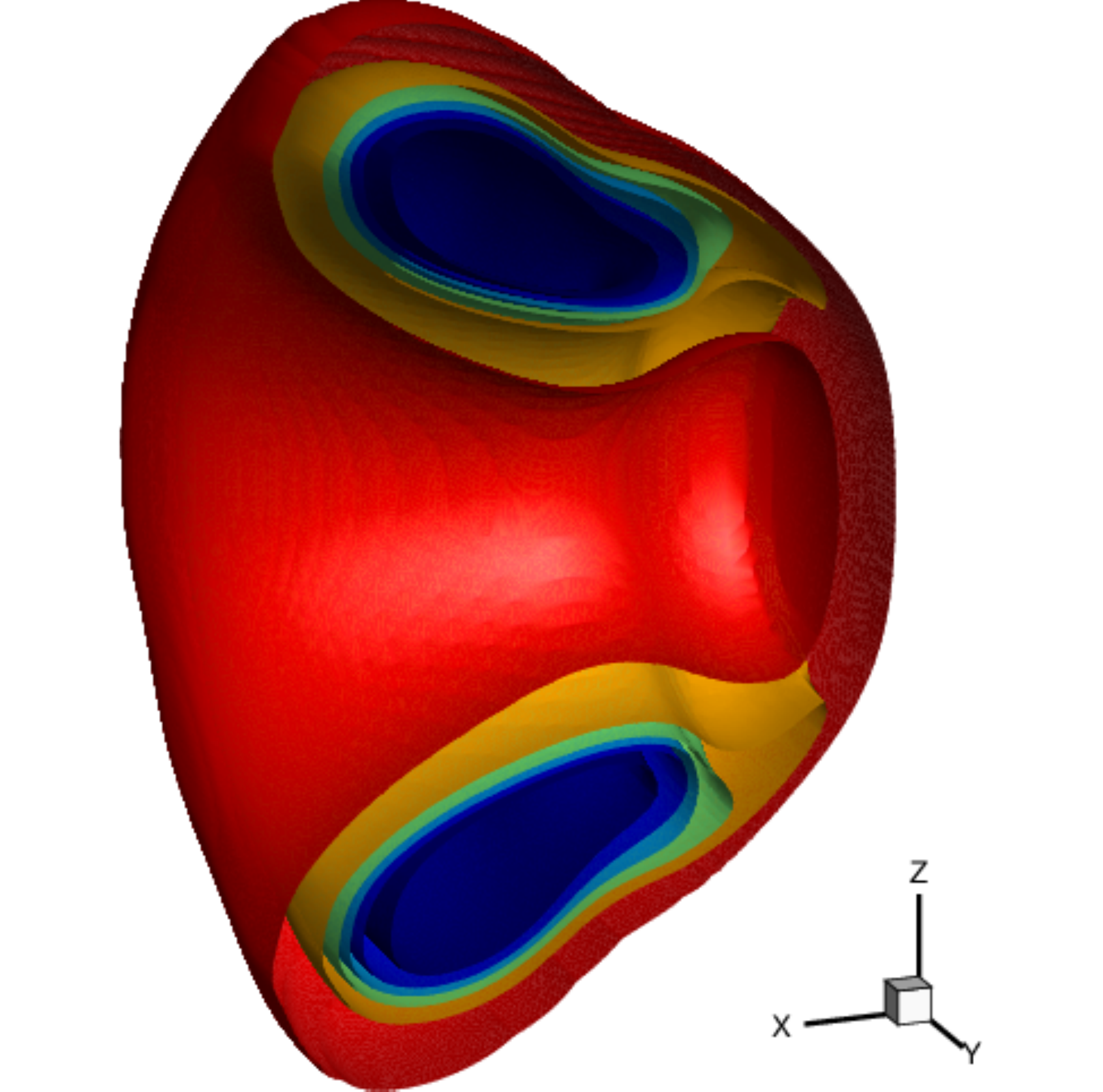}
\\[0.15cm]
	  \includegraphics[width=0.45\textwidth]{./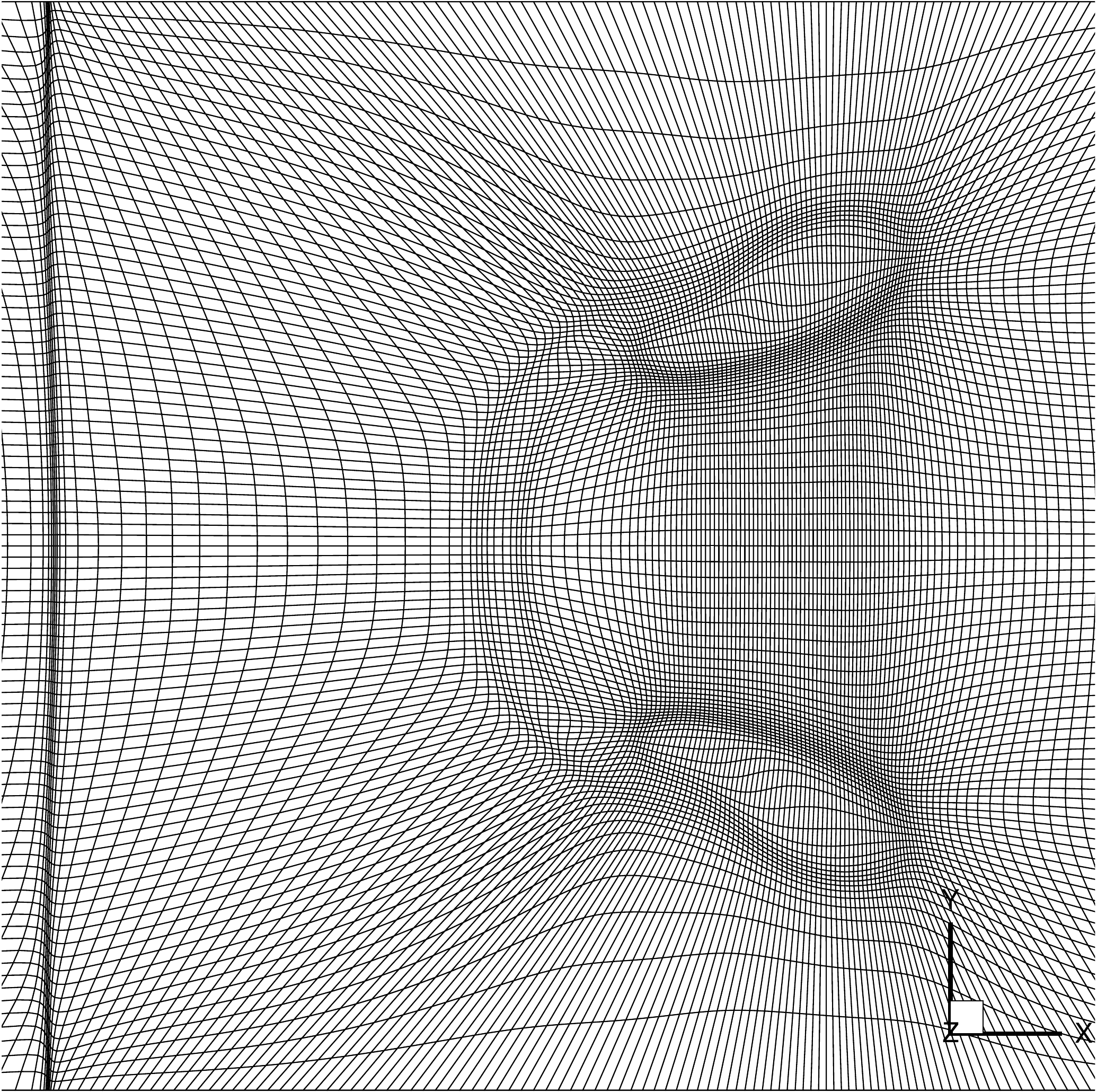}
	  \includegraphics[width=0.45\textwidth]{./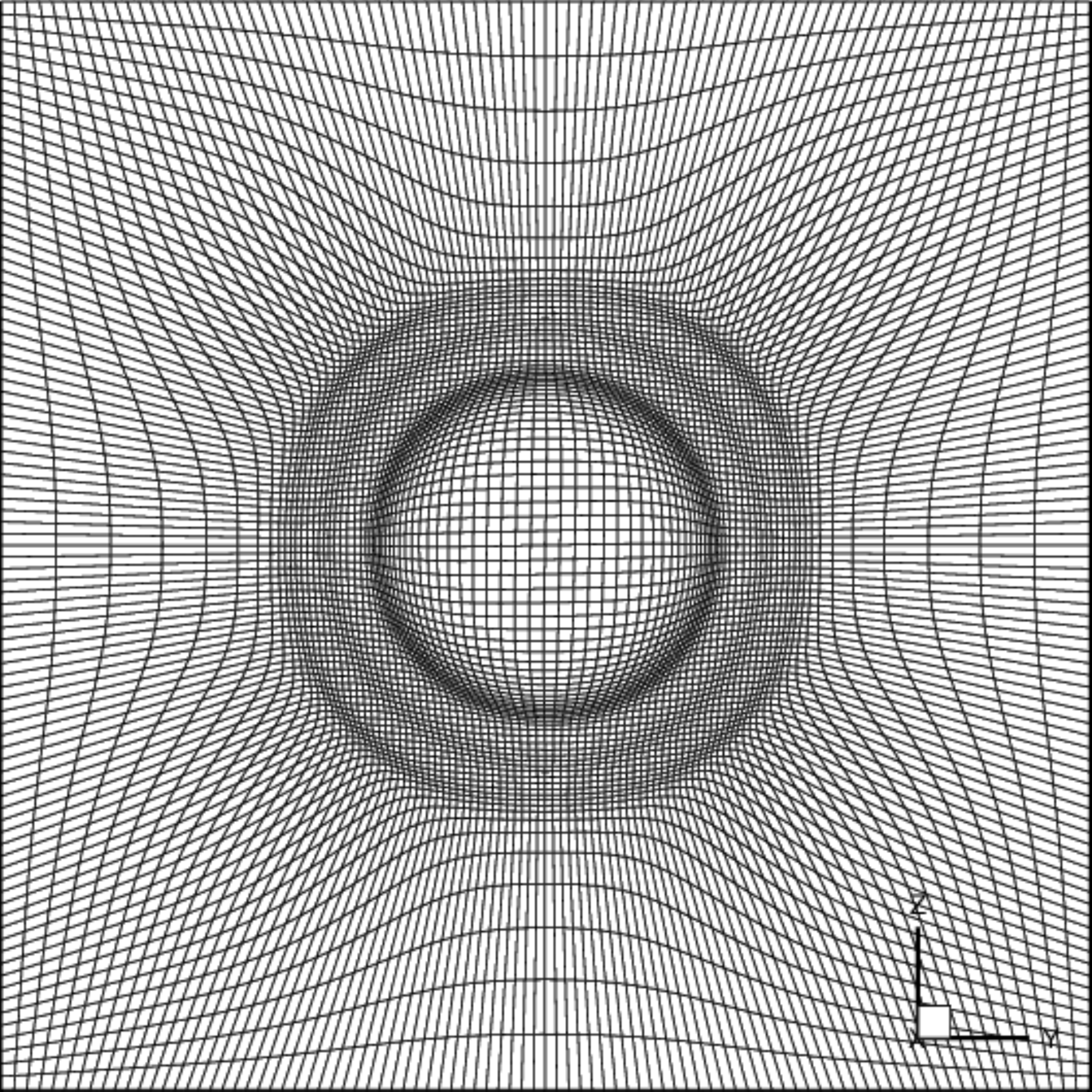}
	\caption{Example \ref{ex:SBL}: Adaptive meshes and $\rho$ at  $t=450$.
		Top left: close-up of the adaptive mesh, $i_1\in[30,140],i_2\in[0,45],i_3\in[0,45]$;
		top right: $6$ iso-surfaces of $\rho$;
		bottom left: the surface mesh with $i_3=i_{3,46+\frac12}$;
		bottom right: the surface mesh on $i_1=i_{1,80+\frac12}$. }
	\label{fig:SBL_mesh}
\end{figure}

\begin{figure}[!ht]
  \centering
\includegraphics[width=0.32\textwidth, trim=1 180 1 180, clip]{./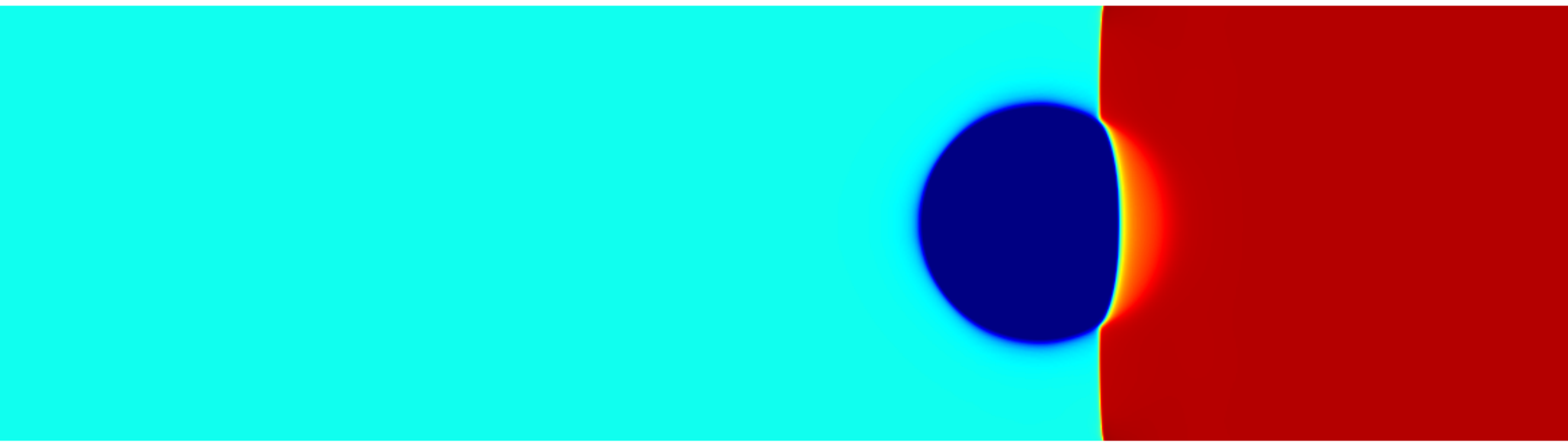}
\includegraphics[width=0.32\textwidth, trim=1 180 1 180, clip]{./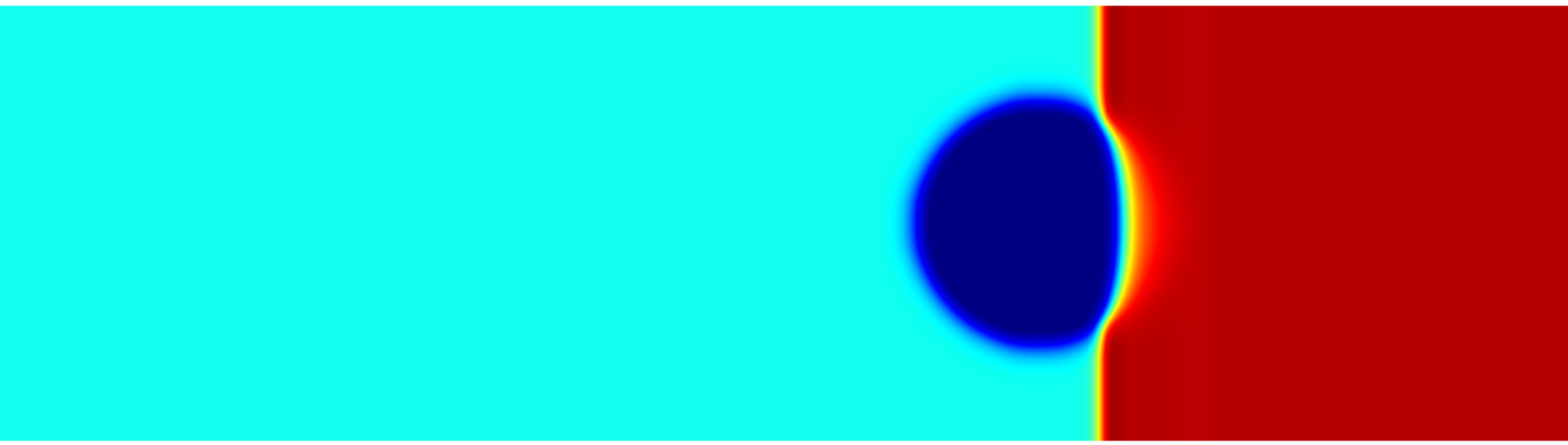}
\includegraphics[width=0.32\textwidth, trim=1 180 1 180, clip]{./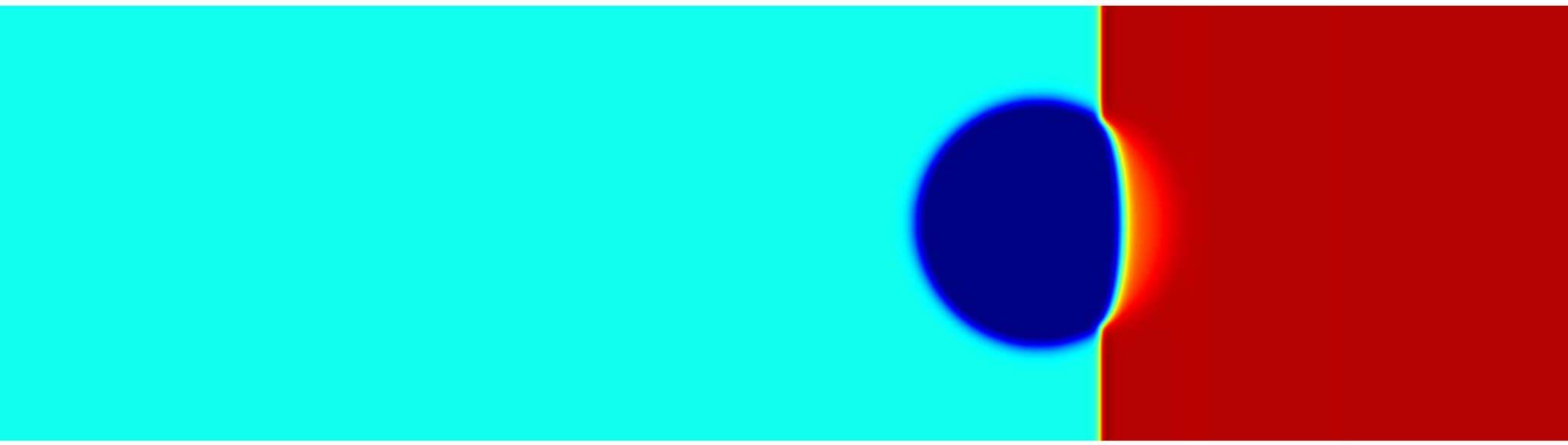}
\includegraphics[width=0.32\textwidth, trim=1 180 1 180, clip]{./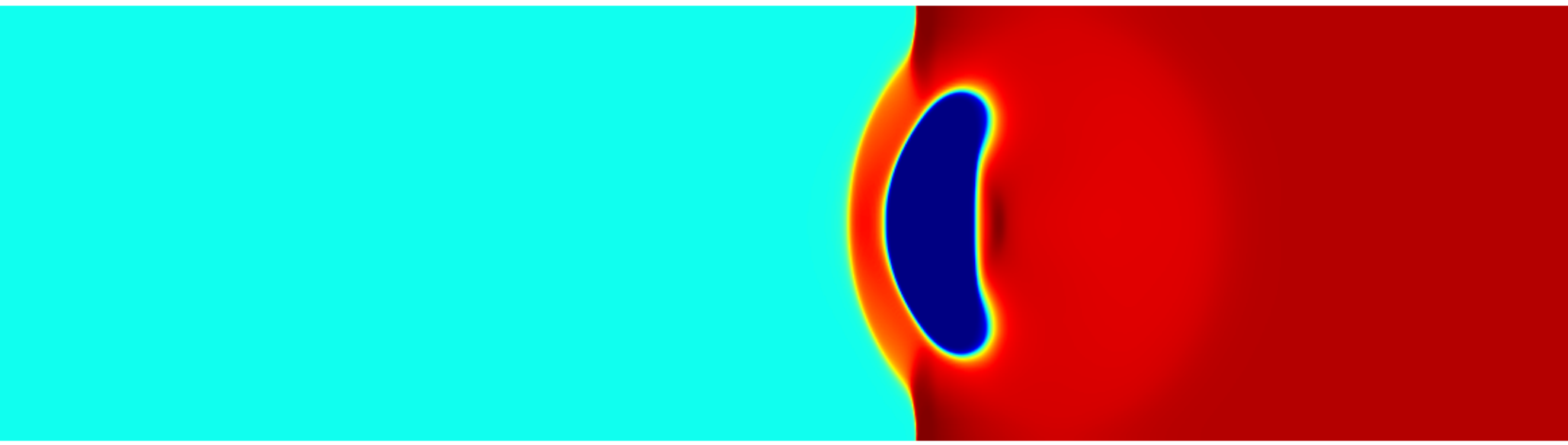}
\includegraphics[width=0.32\textwidth, trim=1 180 1 180, clip]{./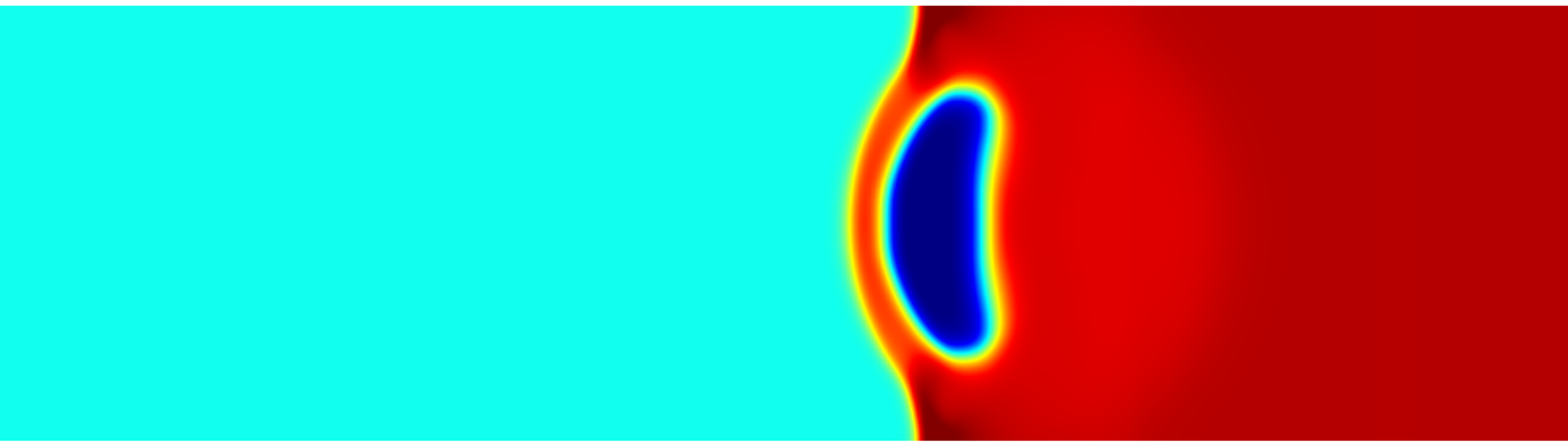}
\includegraphics[width=0.32\textwidth, trim=1 180 1 180, clip]{./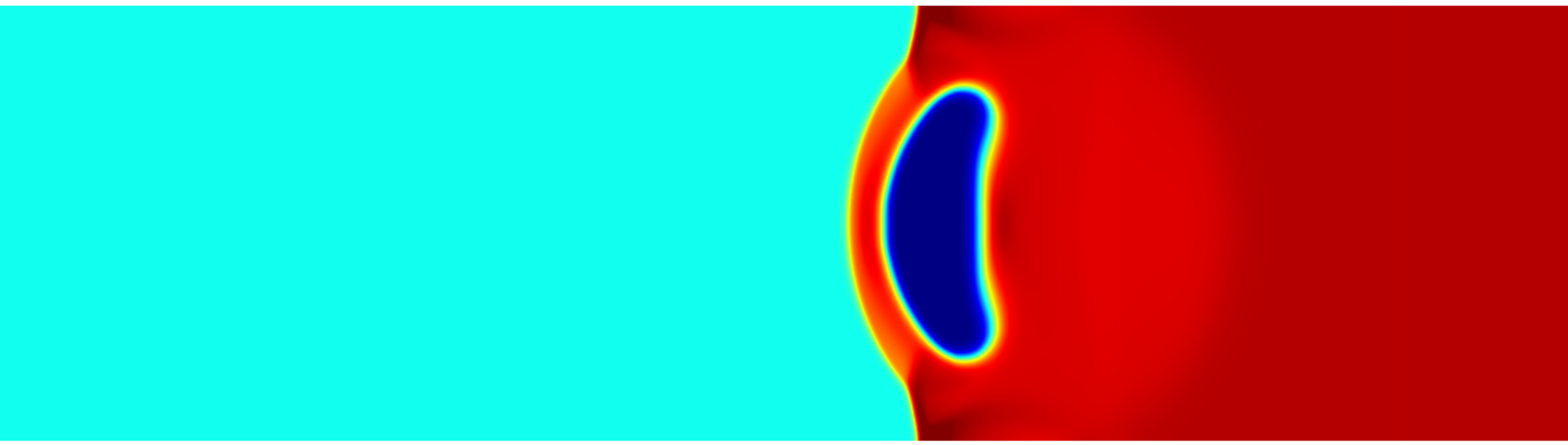}
\includegraphics[width=0.32\textwidth, trim=1 180 1 180, clip]{./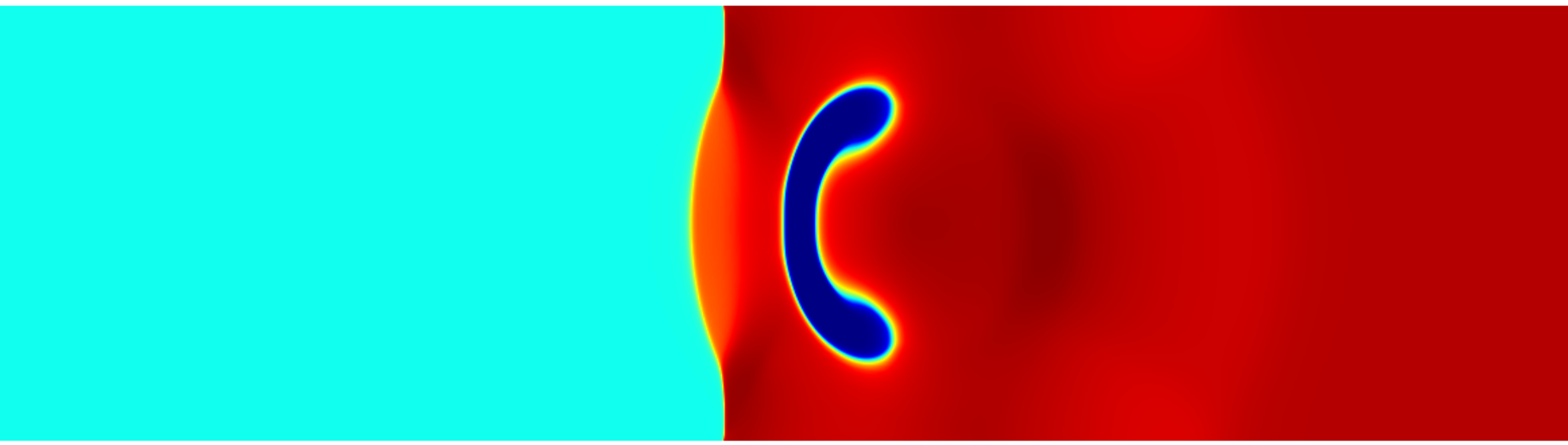}
\includegraphics[width=0.32\textwidth, trim=1 180 1 180, clip]{./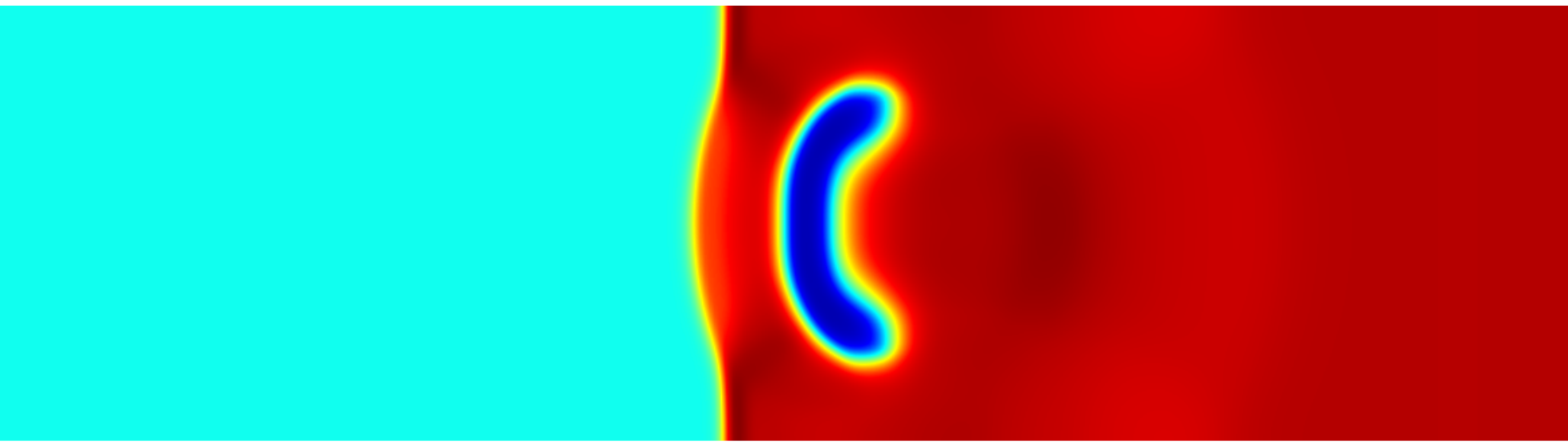}
\includegraphics[width=0.32\textwidth, trim=1 180 1 180, clip]{./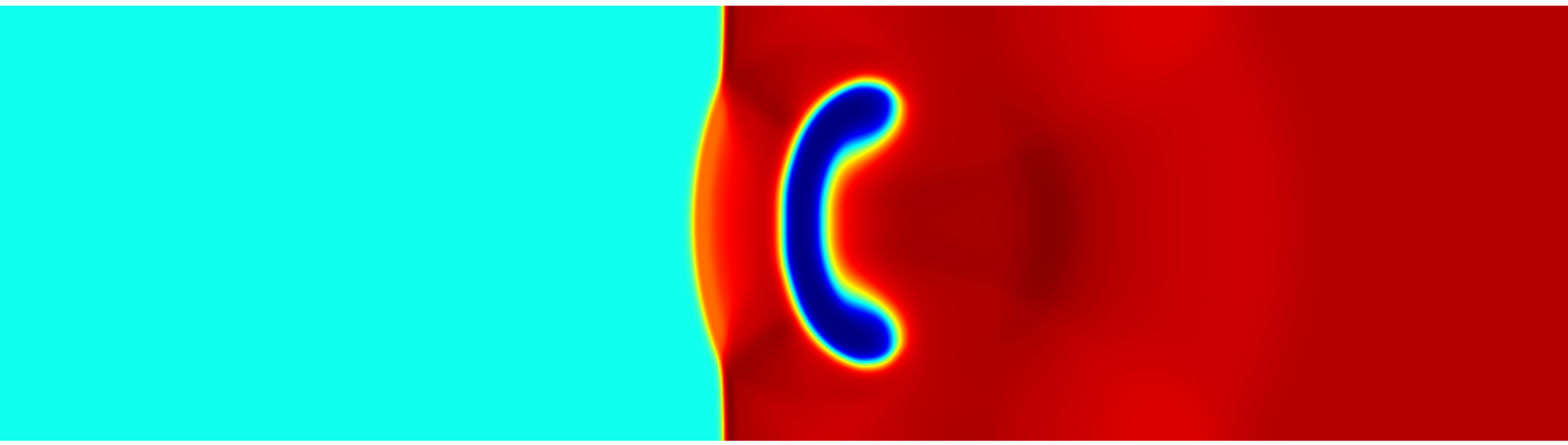}
\includegraphics[width=0.32\textwidth, trim=1 180 1 180, clip]{./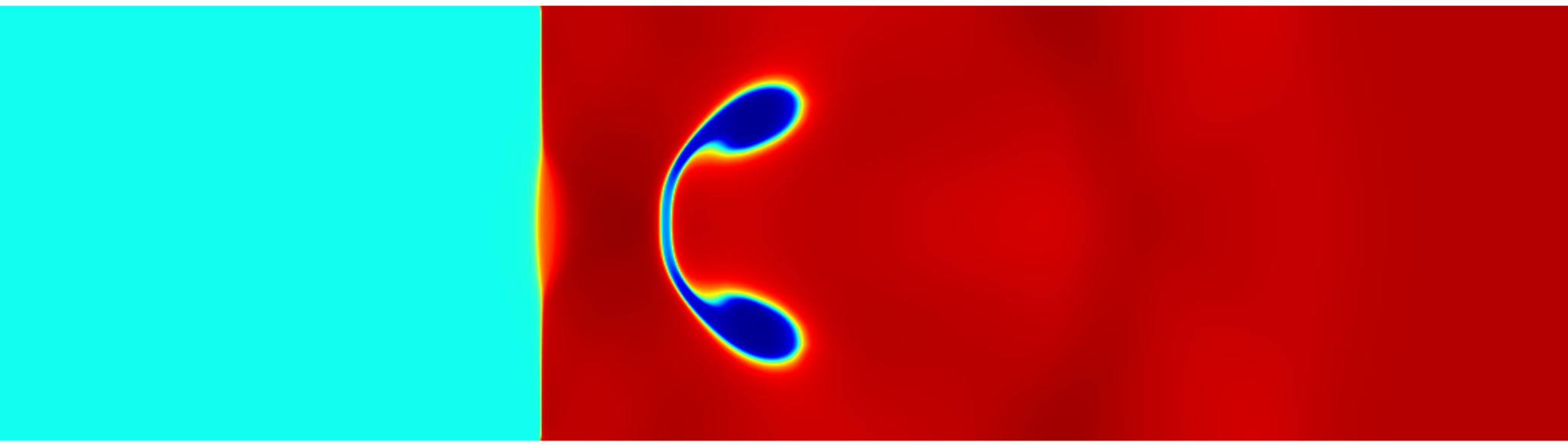}
\includegraphics[width=0.32\textwidth, trim=1 180 1 180, clip]{./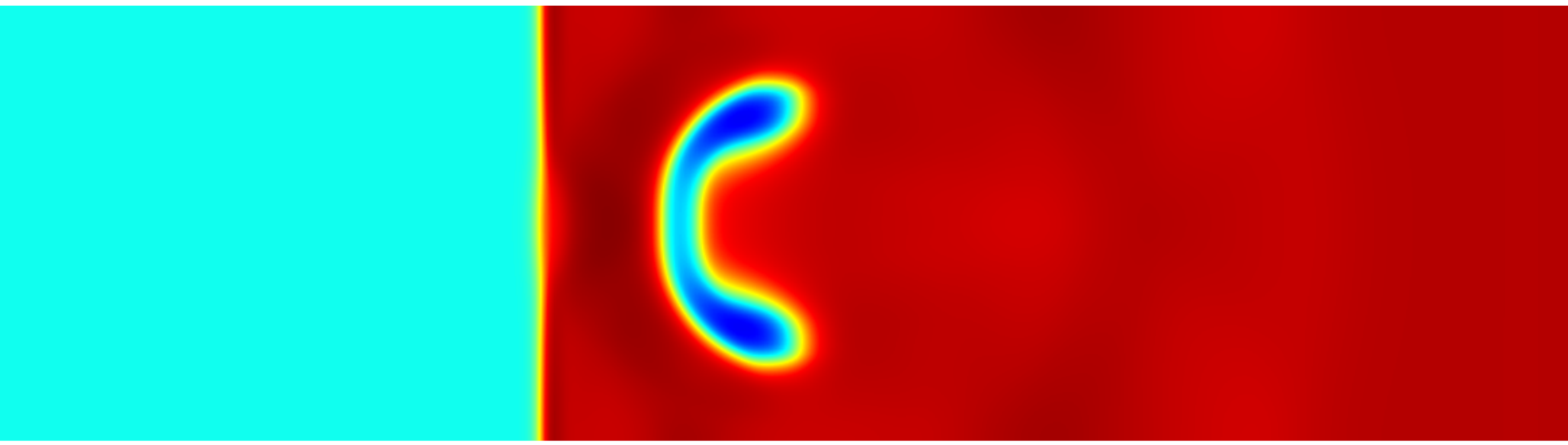}
\includegraphics[width=0.32\textwidth, trim=1 180 1 180, clip]{./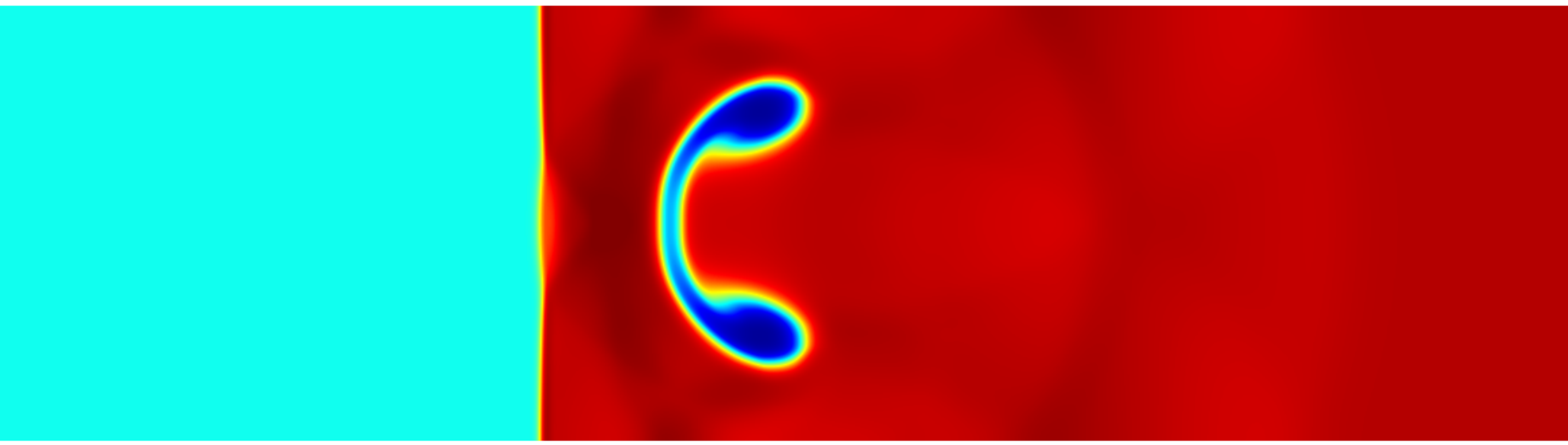}
\includegraphics[width=0.32\textwidth, trim=1 180 1 180, clip]{./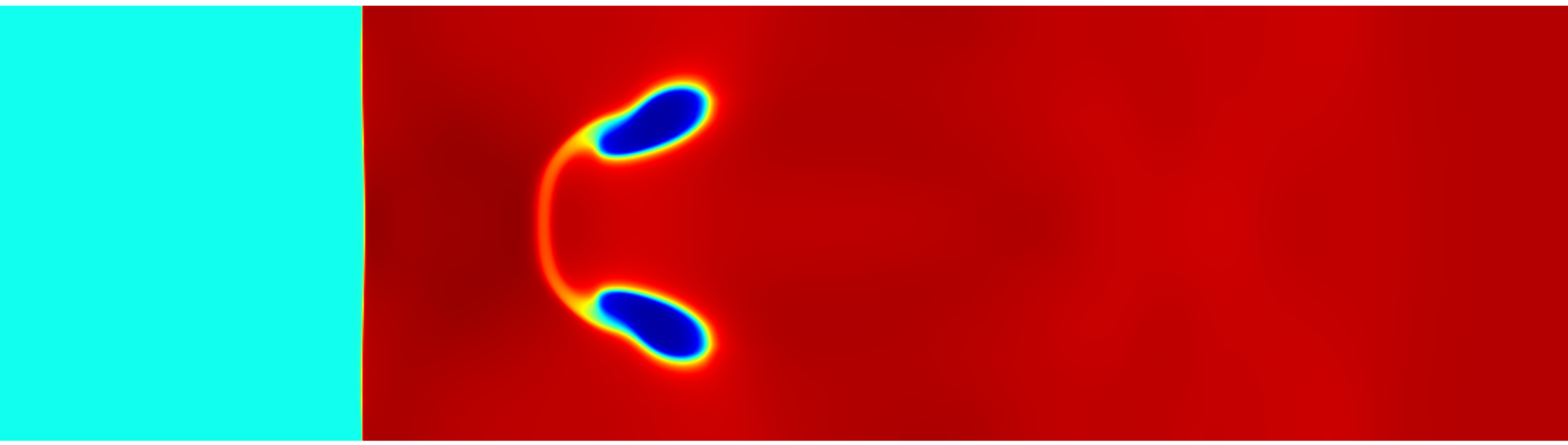}
\includegraphics[width=0.32\textwidth, trim=1 180 1 180, clip]{./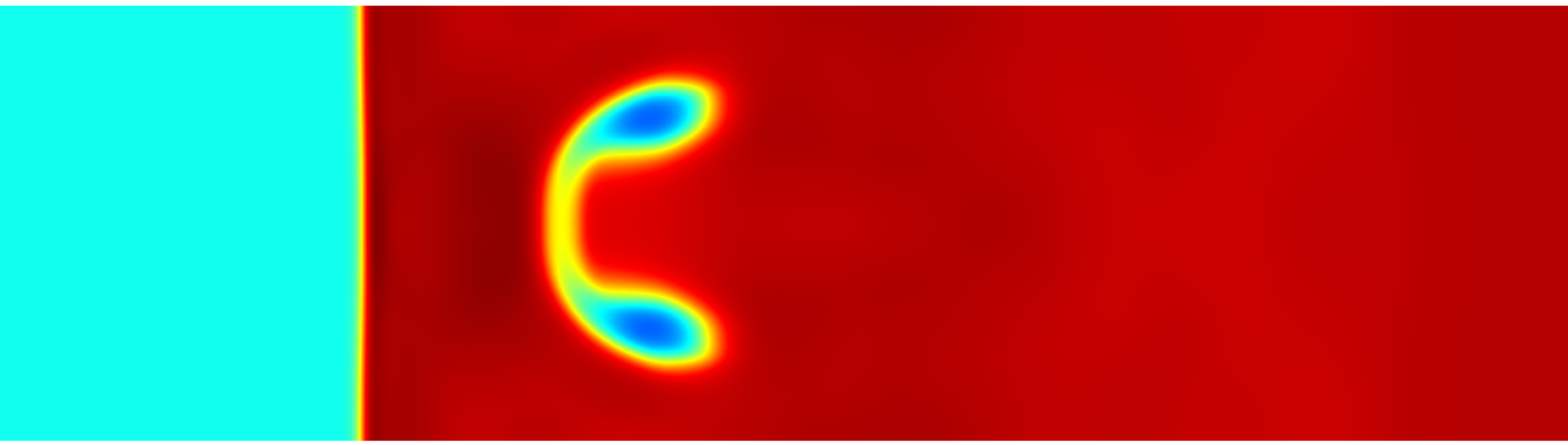}
\includegraphics[width=0.32\textwidth, trim=1 180 1 180, clip]{./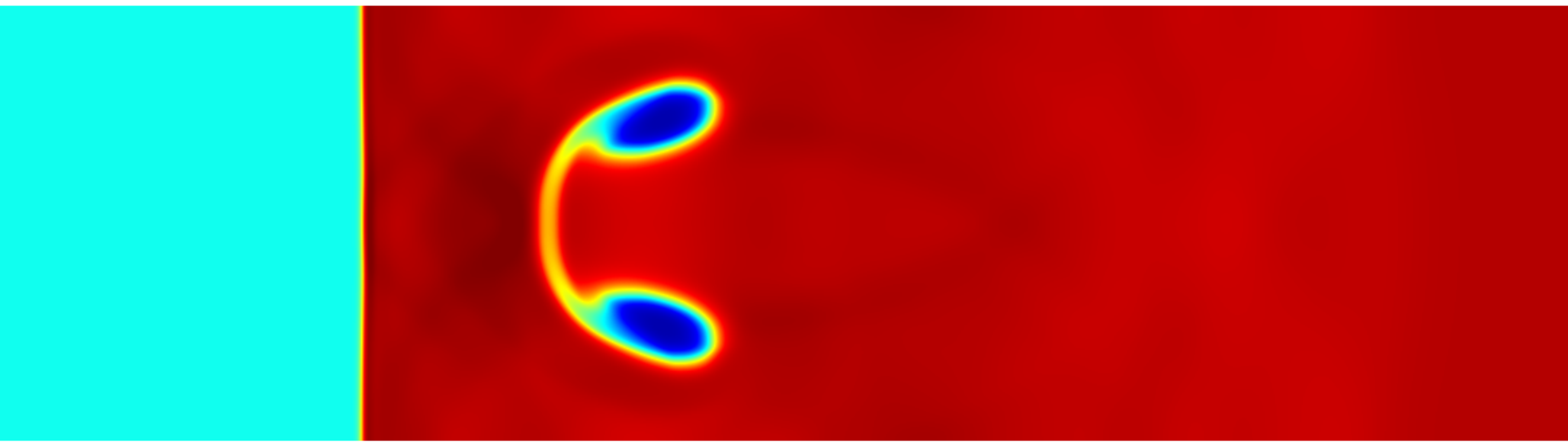}
  \caption{Example \ref{ex:SBL}:  $\rho$ on the slice $x_2=0$ at $t=90,180,270,360,450$ (from top to bottom).
  	Left: moving mesh of $325\times 90\times 90$, middle: uniform mesh of $325\times 90\times 90$, right: uniform mesh of $650\times 180\times 180$. }
  \label{fig:SBL_rho}
\end{figure}

\section{Conclusion}\label{section:Conclusion}
This paper   presented the ES adaptive moving mesh schemes for the 2D and 3D special RHD equations,
which could be viewed as an extension of the second-order ES schemes in \cite{Duan2020RHD} to the adaptive moving mesh.
Our schemes were built on the ES finite volume approximation of the
RHD equations in curvilinear coordinates, the discrete geometric conservation laws, and the adaptive mesh redistribution.
Following the  procedure in \cite{Duan2020RMHD},
we constructed the  EC fluxes in curvilinear coordinates 
for the given entropy pair.
To do that,
a sufficient condition for the so-called two-point EC fluxes was first given.
Its proof mimicked the derivation of the continuous entropy identity in curvilinear coordinates and utilized the discrete GCLs achieved by the conservative metrics method \cite{Thomas1979}.
In order to avoid the numerical oscillation produced by the EC scheme
around the discontinuities, some suitable dissipation term utilizing linear reconstruction with the minmod limiter
in the scaled entropy variables was added to the EC flux to get the second-order accurate ES scheme satisfying
the semi-discrete entropy inequality.
The fully discrete schemes were derived by integrating
the above semi-discrete ES schemes  in time by
using the second-order accurate explicit strong-stability preserving Runge-Kutta schemes.
The resulting fully-discrete scheme was proved to preserve the free-stream states and two approximations of the volume conservation law  were given and compared. The first was easy to be implemented, while
the second could well approach to the value of the Jacobian $J$ calculated by its definition, i.e. the first equation of \eqref{eq:CMM_VCL}.
The mesh points were adaptively moved or redistributed by solving the Euler-Lagrange equation of the mesh adaption functional
on the computational mesh at each time step with suitably chosen monitor functions.
Several 2D and 3D numerical results showed that the ES adaptive moving mesh schemes effectively captured the localized structures,
such as sharp transitions or discontinuities, and were more efficient than their counterparts on uniform mesh.

\section*{Acknowledgments}
The authors were partially supported by the Special Project on High-performance Computing under the
National Key R\&D Program (No. 2016YFB0200603), Science Challenge Project (No. TZ2016002), the Sino-German Cooperation Group Project (No. GZ 1465),
the National Natural Science Foundation of China (No. 11421101),
and High-performance Computing Platform of Peking University.



\end{document}